\newtheorem{theorem}{Theorem}[section]
\newtheorem{corollary}[theorem]{Corollary}
\newtheorem{lemma}[theorem]{Lemma}
\theoremstyle{definition}
\newtheorem{definition}[theorem]{Definition}
\theoremstyle{remark}
\newtheorem{remark}[theorem]{Remark}
\DeclareMathOperator{\Res}{Res}
\DeclareMathOperator{\Sym}{Sym}
\DeclareMathOperator{\Hom}{Hom}
\DeclareMathOperator{\Rep}{Rep}
\DeclareMathOperator{\wt}{wt}
\newcommand{\sgn}{\mathrm{sgn}}
\newcommand{\tr}{\mathrm{trace}}
\newcommand{\mom}[1]{\left\langle #1\right\rangle}
\newcommand{\fimod}{\FI\mathrm{-Mod}}
\newcommand{\FI}{\mathrm{FI}}
\newcommand{\ev}{\mathrm{ev}}
\renewcommand{\Vec}{\mathrm{Vec}}
\newcommand{\F}{\mathcal F}
\newcommand{\mch}[2]{\ensuremath{\left(\kern-.3em\left(\genfrac{}{}{0pt}{}{#1}{#2}\right)\kern-.3em\right)}}
\newcommand{\Par}{\mathrm{Par}}
\newcommand{\vv}{\mathbf v}
\newcommand{\ZZ}{\mathbf Z}
\title{Character Polynomials and the Restriction Problem}
\subjclass[2010]{05E10,20C30,20G05}
\keywords{character polynomial, restriction problem}
\date{14 April 2021}
\author[Narayanan]{Sridhar P. Narayanan}
\address{The Institute of Mathematical Sciences (HBNI), Chennai}
\email{sridharn@imsc.res.in}
\author[Paul]{Digjoy Paul}
\address{Tata Institute of Fundamental Research, Mumbai}
\email{digjoypaul@gmail.com}
\author[Prasad]{Amritanshu Prasad}
\address{The Institute of Mathematical Sciences (HBNI), Chennai}
\email{amri@imsc.res.in}
\author[Srivastava]{Shraddha Srivastava}
\address{Uppsala University, Uppsala}
\email{maths.shraddha@gmail.com}
\begin{document}
\begin{abstract}
  Character polynomials are used to study the restriction of a polynomial representation of a general linear group to its subgroup of permutation matrices.
  A simple formula is obtained for computing inner products of class functions given by character polynomials.
  Character polynomials for symmetric and alternating tensors are computed using generating functions with Eulerian factorizations.
  These are used to compute character polynomials for Weyl modules, which exhibit a duality.
  By taking inner products of character polynomials for Weyl modules and character polynomials for Specht modules, stable restriction coefficients are easily computed.
  Generating functions of dimensions of symmetric group invariants in Weyl modules are obtained.
  Partitions with two rows, two columns, and hook partitions whose Weyl modules have non-zero vectors invariant under the symmetric group are characterized.
  A reformulation of the restriction problem in terms of a restriction functor from the category of strict polynomial functors to the category of finitely generated FI-modules is obtained.
\end{abstract}
\maketitle
\section{Introduction}
\label{sec:introduction}
Let $K$ be a field of characteristic $0$.
Let $P=K[X_1,X_2,\dotsc]$, a ring of polynomials in infinitely many variables.
Regard $P$ as a graded algebra where the variable $X_i$ has degree $i$.
In this grading, the monomial $X_1^{a_1}X_2^{a_2}\dotsb $ has degree $\sum_i i a_i$.
\begin{definition}
For each $n\geq 1$, let $V_n$ be a representation of the symmetric group $S_n$.
The collection $\{V_n\}_{n=1}^\infty$ is said to have \emph{eventually polynomial character}, if there exists $q\in P$ and a positive integer $N$ such that, for each $n\geq N$ and each $w\in S_n$,
\begin{displaymath}
  \tr(w;V_n) = q(X_1(w),X_2(w),\dotsc),
\end{displaymath}
where $X_i(w)$ is the number of $i$-cycles in $w$.
The collection $\{V_n\}$ is said to have \emph{polynomial character} if $N=1$.
The polynomial $q$ is called the \emph{character polynomial} of $\{V_n\}$.
\end{definition}
Character polynomials have been used to study characters of families of representations of symmetric groups that occur naturally in combinatorics, topology and other areas.
A survey of their history can be found in the article of Garsia and Goupil \cite{MR2576382}.
More recently, Church, Ellenberg and Farb \cite{MR3357185} developed the theory of FI-modules.
They showed that each finitely generated FI-module gives rise to a family of representations with eventually polynomial character.

Any polynomial $q\in P$ gives rise to a class function on $S_n$ for every positive integer $n$.
The value of this function at $w\in S_n$ is obtained by substituting for $X_i$ the number of $i$-cycles in $w$.
For each $n$, we define the moment of $q$ as the average value of the associated class function on $S_n$.
The ring $P$ has a basis indexed by integer partitions, which we call the binomial basis (Definition~\ref{definition:binomial-basis}).
We give an explicit formula for the moment of a binomial basis element (Theorem~\ref{theorem:moment}).
This formula can be used to compute inner products of class functions coming from character polynomials.
It implies that such an inner product achieves a constant value for large $n$ (Corollary \ref{corollary:stability}).
This is a character-theoretic analogue of \cite[Theorem~1.13]{MR3357185}, which establishes representation stability for finitely generated FI-modules.

The homogeneous polynomial representations of degree $d$ general linear groups $GL_n(K)$ and the representations of the symmetric group $S_d$ are linked via Schur-Weyl duality (see \cite{Green}).
The restriction problem explores a different relationship between polynomial representation of $GL_n(K)$ and $S_n$.
It asks how an irreducible polynomial representation of $GL_n(K)$ decomposes when it is restricted to $S_n$ sitting inside $GL_n(K)$ as the subgroup of permutation matrices.

For each partition $\lambda$, let $W_\lambda$ denote the Weyl functor (see~\cite[Definition II.1.3]{ABW}) associated to $\lambda$.
Let $P(n,d)$ denote the set of all partitions of $d$ with at most $n$ parts.
Then $W_\lambda(K^n)$, as $\lambda$ runs over $P(n,d)$, are the irreducible polynomial representations of the general linear group $GL_n(K)$ of degree $d$.

Following the notation and terminology of \cite[Definition~2.2.5]{MR3357185}, for a partition $\mu=(\mu_1,\dotsc,\mu_m)$ of size $|\mu|$ and an integer $n\geq \mu_1+|\mu|$, let $\mu[n]$ denote the padded partition $(n-|\mu|, \mu_1,\mu_2,\dotsc,\mu_m)$.
Let $V_{\mu[n]}$ denote the Specht module of $S_n$ corresponding to $\mu[n]$.

Consider the decomposition of the restriction of $W_\lambda(K^n)$ to $S_n$ into Specht modules:
\begin{displaymath}
  \Res^{GL_n(K)}_{S_n} W_\lambda(K^n) = \bigoplus_\mu V_{\mu[n]}^{\oplus r_{\lambda\mu}(n)},
\end{displaymath}
where the sum is over partitions $\mu$ such that $n-|\mu|\geq \mu_1$.
It is well-known that the coefficients $r_{\lambda\mu}(n)$ are eventually constant for large $n$ (this result is attributed to D.~E.~Littlewood by Assaf and Speyer \cite{assaf-speyer}).
Let $r_{\lambda\mu}$ be their eventually constant value, which is called the \emph{stable restriction coefficient}.
Finding a combinatorial interpretation of $r_{\lambda\mu}$ is known as the \emph{restriction problem}.

In this article we show that the family $\{\Res^{GL_n(K)}_{S_n} W_\lambda(K^n)\}$ has polynomial character.
We determine its character polynomial $S_\lambda$ (Theorem~\ref{theorem:char-poly-weyl}) by applying the Jacobi--Trudi identities to the character polynomials of symmetric and exterior powers of $K^n$ (Theorem~\ref{theorem:he}).
The character polynomials of symmetric and exterior tensor powers of $K^n$ have generating functions with Eulerian factorization (Theorem~\ref{theorem:he}).
Multiplying $S_\lambda$ by the character polynomial $q_\mu$ of Specht modules $\{V_{\mu[n]}\}$ (which was computed by Macdonald \cite[Example~I.7.14(b)]{MR3443860} and Garsia--Goupil \cite{MR2576382}), and then taking moments (Theorem~\ref{theorem:moment}) gives an algorithm to compute stable restriction coefficients (Theorem~\ref{theorem:stable-res}).
Assaf and Speyer \cite{assaf-speyer} and independently, Orellana and Zabrocki \cite{2016arXiv160506672O} introduced \emph{Specht symmetric functions} to study the restriction problem.
In Section~\ref{sec:oz} we explain the relationship between these two approaches.

Notwithstanding several interesting recent developments \cite{assaf-speyer,2018arXiv180404702H,2016arXiv160506672O,2019arXiv190100378O}, a solution to the restriction problem remains elusive.
Even $r_{\lambda\emptyset}$ (here $\emptyset$ denotes the empty partition of $0$, so $r_{\lambda\emptyset}$ is the dimension of the space of $S_n$-invariant vectors in $W_\lambda(K^n)$ for large $n$) appears to be a non-trivial and interesting problem.
We provide generating functions in $\lambda$ for the dimension of the space of $S_n$-invariant vectors in $W_\lambda(K^n)$ (Corollary~\ref{corollary:weyl-mom}).
Using our main generating function (Theorem~\ref{theorem:hlaemu}) for the dimension of $S_n$-invariants in mixed tensors, we are able to characterize partitions with two rows, two columns and hook partitions which have non-zero $S_n$-invariant vectors.
To the best of our knowledge, the specific problem of determining the $S_n$-invariant vectors of a Weyl-module has not been studied before.

We conclude this paper by placing the restriction problem in the context of strict polynomial functors and FI-modules.
Friedlander and Suslin \cite{MR1427618} introduced strict polynomial functors of degree $d$.
The polynomial representations of degree $d$ of $GL_n(K)$ are obtained by evaluating strict polynomial functors of degree $d$ at $K^n$.
Similarly, families of representations of $S_n$ with stability properties can be obtained by evaluating finitely generated FI-modules at $\{1,\dotsc,n\}$ (see \cite{MR3357185}).
We define a functor from the category of strict polynomial functors of degree $d$ to the category of finitely generated FI-modules for every $d$ (Section~\ref{sec:restriction-functor}).
This functor corresponds to restriction of representations from $GL_n(K)$ to $S_n$ under evaluation functors (Theorem~\ref{theorem:Res}).
\section{Character Polynomials and their Moments}
\label{sec:char-polyn}
\subsection{Moments and Stability}
\label{sec:moments}
\begin{definition}[Moment]
  The moment of $q\in P$ at $n$ is defined as:
  \begin{displaymath}
    \mom q_n=\frac 1{n!} \sum_{w\in S_n} q(X_1(w),X_2(w),\dotsc).
  \end{displaymath}
\end{definition}

We shall express integer partitions in \emph{exponential notation}: given a partition $\alpha$ with largest part $r$, we write:
\begin{displaymath}
  \alpha = 1^{a_1}2^{a_2}\dotsb r^{a_r},
\end{displaymath}
where $a_i$ is the number of parts of $\alpha$ of size $i$ for each $1\leq i\leq r$.
Thus $\alpha$ is a partition of the integer $|\alpha|:=a_1+2a_2+\dotsb+ra_r$.
For every integer partition $\alpha=1^{a_1}\dotsb r^{a_r}$ define $\binom X\alpha\in P$ by:
\begin{displaymath}
  \binom X\alpha = \binom{X_1}{a_1}\binom{X_2}{a_2}\dotsb \binom{X_r}{a_r}.
\end{displaymath}
\begin{definition}
  [Binomial basis]
  \label{definition:binomial-basis}
  The basis of $P$ consisting of elements
  \begin{displaymath}
    \left\{\left.\binom X\alpha \;\right|\; \alpha \text{ is an integer partition}\right\}
  \end{displaymath}
  is called the \emph{binomial basis} of $P$.
\end{definition}
For an integer partition $\alpha=1^{a_1}2^{a_2}\dotsb r^{a_r}$, define $z_\alpha = \prod_{i=1}^r i^{a_i}a_i!$.
This is the order of the centralizer in $S_n$ of a permutation with cycle-type $\alpha$.
\begin{theorem}
  \label{theorem:moment}
  For every integer partition $\alpha = 1^{a_1}2^{a_2}\dotsb$, we have:
  \begin{displaymath}
    \mom{\binom X\alpha}_n =
    \begin{cases}
      0 & \text{ if } n < |\alpha|,\\
      1/z_\alpha & \text{ otherwise.}
    \end{cases}
  \end{displaymath}
\end{theorem}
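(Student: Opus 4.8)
The plan is to compute $\mom{\binom X\alpha}_n$ directly from the definition by reorganizing the sum over $S_n$ according to cycle type. Write $\alpha = 1^{a_1}2^{a_2}\dotsb r^{a_r}$. If $n < |\alpha|$, then no $w \in S_n$ can have $a_i$ or more $i$-cycles for all $i$ simultaneously — more precisely, for at least one $i$ we have $X_i(w) < a_i$, so $\binom{X_i(w)}{a_i} = 0$ and hence $\binom{X(w)}{\alpha} = 0$; summing zeros gives $\mom{\binom X\alpha}_n = 0$. This handles the first case. For the main case $n \geq |\alpha|$, group the permutations by cycle type: for a partition $\beta = 1^{b_1}2^{b_2}\dotsb$ of $n$, the number of $w \in S_n$ of cycle type $\beta$ is $n!/z_\beta$, and for such $w$ we have $\binom{X(w)}{\alpha} = \prod_i \binom{b_i}{a_i}$. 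Thus
\begin{displaymath}
  \mom{\binom X\alpha}_n = \frac{1}{n!}\sum_{\beta \vdash n} \frac{n!}{z_\beta}\prod_i \binom{b_i}{a_i} = \sum_{\beta \vdash n}\frac{1}{z_\beta}\prod_i\binom{b_i}{a_i}.
\end{displaymath}

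The key step is to evaluate this last sum, and the natural tool is a generating-function identity. Recall the exponential formula: $\sum_{\beta} \frac{1}{z_\beta}\prod_i x_i^{b_i} = \exp\left(\sum_{i \geq 1} x_i/i\right)$, where the sum is over all partitions $\beta$ (with $b_i$ the multiplicity of $i$ in $\beta$) and the coefficient extraction is graded by $|\beta| = \sum_i i b_i$. I would introduce a grading variable $t$ with $x_i \mapsto t^i$ to track $|\beta| = n$, so that
\begin{displaymath}
  \sum_{n \geq 0} t^n \sum_{\beta \vdash n} \frac{1}{z_\beta}\prod_i x_i^{b_i}\Big|_{x_i = 1} = \exp\left(\sum_{i \geq 1} \frac{t^i}{i}\right) = \frac{1}{1-t}.
\end{displaymath}
To incorporate the factors $\binom{b_i}{a_i}$, I would apply the differential operator that converts $x_i^{b_i}$ into $\binom{b_i}{a_i} x_i^{b_i - a_i}$ — namely $\frac{1}{a_i!}\left(\frac{\partial}{\partial x_i}\right)^{a_i}$ applied at $x_i = 1$, since $\frac{1}{a_i!}\partial_{x_i}^{a_i} x_i^{b_i}\big|_{x_i=1} = \binom{b_i}{a_i}$. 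Applying $\prod_i \frac{1}{a_i!}\partial_{x_i}^{a_i}$ to $\exp\left(\sum_i x_i t^i/i\right)$ and then setting all $x_i = 1$: because the exponential factorizes as $\prod_i \exp(x_i t^i/i)$, the operator $\frac{1}{a_i!}\partial_{x_i}^{a_i}$ pulls out a factor $(t^i/i)^{a_i}/a_i!$ from the $i$-th factor and leaves the exponential intact, giving
\begin{displaymath}
  \prod_i \frac{(t^i/i)^{a_i}}{a_i!}\cdot \exp\left(\sum_{i\geq 1}\frac{t^i}{i}\right) = \frac{1}{1-t}\prod_i \frac{t^{i a_i}}{i^{a_i} a_i!} = \frac{t^{|\alpha|}}{z_\alpha}\cdot\frac{1}{1-t}.
\end{displaymath}

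Extracting the coefficient of $t^n$ from $\frac{t^{|\alpha|}}{z_\alpha(1-t)} = \frac{1}{z_\alpha}\sum_{m \geq |\alpha|} t^m$ gives $1/z_\alpha$ for every $n \geq |\alpha|$ and $0$ for $n < |\alpha|$, which matches both cases of the claimed formula and completes the proof. The main obstacle is purely bookkeeping: justifying the interchange of the differential operator with the infinite product and the formal power series manipulations, and confirming that extracting the $t^n$-coefficient of the operator output indeed reproduces $\sum_{\beta \vdash n}\frac{1}{z_\beta}\prod_i\binom{b_i}{a_i}$ term by term. An alternative, more hands-on route avoids generating functions entirely: substitute $b_i = a_i + c_i$ with $c_i \geq 0$, note $\binom{b_i}{a_i}/z_\beta$ telescopes into $\frac{1}{z_\alpha}\cdot\frac{1}{z_\gamma}$ where $\gamma = 1^{c_1}2^{c_2}\dotsb$ is a partition of $n - |\alpha|$, and then use $\sum_{\gamma \vdash m} 1/z_\gamma = 1$ (the $m=0$ case contributing the term for $n = |\alpha|$); I would present whichever is cleaner, but both reduce to the standard identity $\sum_{\gamma \vdash m} z_\gamma^{-1} = 1$.
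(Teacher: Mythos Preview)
Your proposal is correct and essentially matches the paper's proof: the paper also passes to the generating function $\sum_{n\geq 0}\mom{\binom X\alpha}_n v^n$, groups by cycle type, and arrives at $\dfrac{v^{|\alpha|}}{z_\alpha(1-v)}$ via the substitution $c_i=b_i-a_i$ and the identity $\sum_{\gamma\vdash m}1/z_\gamma=1$---which is precisely the ``alternative, more hands-on route'' you sketch at the end. Your primary differential-operator presentation is just a repackaging of that same substitution (differentiating $\exp(x_i t^i/i)$ and setting $x_i=1$ amounts to shifting the exponent by $a_i$), so there is no real divergence in method.
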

\begin{proof}
  We have:
  \begin{displaymath}
    \sum_{n\geq 0}\mom{\binom X\alpha}_nv^n = \sum_{n\geq 0}\frac 1{n!}\sum_{w\in S_n}\prod_{i\geq 1} \binom{X_i(w)}{a_i}v^{iX_i(w)}.
  \end{displaymath}
  Replace the sum $w\in S_n$ by a sum over conjugacy classes in $S_n$.
  If $\beta=1^{b_1}2^{b_2}\dotsb$ is a partition of $n$, then the number of elements in $S_n$ with cycle type $\beta$ is $\frac{n!}{\prod_i i^{b_i}b_i!}$.
  We get:
  \begin{align*}
    \sum_{n\geq 0}\mom{\binom X\alpha}_nv^n & = \sum_{n\geq 0}\sum_{\beta\vdash n}\prod_{i\geq 1}\frac{v^{ib_i}}{i^{b_i}b_i!}\binom{b_i}{a_i}\\
                                            & = \sum_{b_i\geq a_i} \prod_{i\geq 1}\frac{v^{ib_i}}{i^{b_i}b_i!}\binom{b_i}{a_i}\\
                                            & = \sum_{b_i\geq a_i} \prod_{i\geq 1}\frac{v^{ib_i}}{i^{b_i}a_i!(b_i-a_i)!}\\
    & = \sum_{b_i\geq a_i} \prod_{i\geq 1}\frac{v^{ia_i}}{i^{a_i}a_i!} \frac{v^{i(b_i-a_i)}}{i^{b_i-a_i}(b_i-a_i)!}.
  \end{align*}
  Setting $c_i=b_i-a_i$ gives:
  \begin{align*}
    \sum_{n\geq 0}\mom{\binom X\alpha}_nv^n & = \frac{v^{|\alpha|}}{z_\alpha}\prod_{i\geq 1}\sum_{c_i\geq 0}\frac{v^{ic_i}}{i^{c_i}c_i!}\\
                                            & = \frac{v^{|\alpha|}}{z_\alpha}\sum_{n\geq 0} v^n\sum_{\gamma\vdash n} \frac 1{z_\gamma}.\\
  \end{align*}
  Since $\sum_{\gamma\vdash n} 1/z_\gamma = 1$ for every $n$, we get:
  \begin{equation}
    \label{eq:binomial-moments}
    \sum_{n\geq 0}\mom{\binom X\alpha}_nv^n  = \frac{v^{|\alpha|}}{z_\alpha}\frac 1{1-v},
  \end{equation}
  from which Theorem~\ref{theorem:moment} follows.
\end{proof}
For two representations $V$ and $W$ of $S_n$, let:
\begin{displaymath}
  \langle V, W\rangle_n = \dim \Hom_{S_n}(V,W),
\end{displaymath}
which is the same as the Schur inner product of their characters:
\begin{displaymath}
  \langle V, W\rangle_n = \frac 1{n!}\sum_{w\in S_n} \tr(w;V)\tr(w,W).
\end{displaymath}
\begin{corollary}
  \label{corollary:stability}
  For any $q\in P$ of degree $d$, $\mom{q}_n=\mom{q}_d$ for all $n\geq d$.
  In particular, if $\{V_n\}$ and $\{W_n\}$ are families of representations with polynomial characters of degree $d_1$ and $d_2$, then $\langle V_n,W_n\rangle_n$ stabilizes for $n\geq d_1+d_2$.
\end{corollary}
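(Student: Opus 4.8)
The plan is to deduce both assertions from Theorem~\ref{theorem:moment} together with the linearity of the moment $\mom{-}_n$ in its argument. First I would record the relevant structural fact about the binomial basis: since $\binom{X_i}{a_i}$ is a polynomial of degree $a_i$ in $X_i$ with leading term $X_i^{a_i}/a_i!$, the element $\binom X\alpha$ attached to $\alpha = 1^{a_1}2^{a_2}\dotsb$ lies in the degree-$\le |\alpha|$ part of $P$ (in the grading where $X_i$ has degree $i$), with leading term $\prod_i X_i^{a_i}/a_i!$. A unitriangularity argument then shows that $\{\binom X\alpha : |\alpha|\le d\}$ is a basis of the space of polynomials of degree $\le d$. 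Consequently, writing $q = \sum_\alpha c_\alpha\binom X\alpha$ for $q\in P$ of degree $d$, the sum ranges only over partitions with $|\alpha|\le d$.

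Next I would apply linearity: $\mom q_n = \sum_{|\alpha|\le d} c_\alpha\mom{\binom X\alpha}_n$. For $n\ge d$, every partition $\alpha$ occurring in this sum satisfies $n\ge d\ge|\alpha|$, so Theorem~\ref{theorem:moment} gives $\mom{\binom X\alpha}_n = 1/z_\alpha$, which does not depend on $n$. Hence $\mom q_n = \sum_{|\alpha|\le d} c_\alpha/z_\alpha$ for all $n\ge d$; in particular $\mom q_n = \mom q_d$ there, which is the first assertion.

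For the second assertion, let $p$ and $q$ denote the character polynomials of $\{V_n\}$ and $\{W_n\}$, of degrees $d_1$ and $d_2$. For every $n$ and every $w\in S_n$ one has $\tr(w;V_n)\tr(w;W_n) = (pq)(X_1(w),X_2(w),\dotsc)$, so by the definitions of the Schur inner product and of the moment, $\langle V_n,W_n\rangle_n = \mom{pq}_n$. The product $pq\in P$ has degree $d_1+d_2$, so the first part applies: $\mom{pq}_n = \mom{pq}_{d_1+d_2}$ for all $n\ge d_1+d_2$. Therefore $\langle V_n,W_n\rangle_n$ takes the constant value $\mom{pq}_{d_1+d_2}$ once $n\ge d_1+d_2$.

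I do not anticipate any genuine obstacle; the one point that rewards a little attention is the claim that the binomial expansion of a degree-$d$ polynomial involves only partitions of size at most $d$, since it is precisely this fact — combined with the clean case distinction in Theorem~\ref{theorem:moment} at the single threshold $n = |\alpha|$ — that makes one uniform bound ($n\ge d$, respectively $n\ge d_1+d_2$) work simultaneously for every basis term in the sum.
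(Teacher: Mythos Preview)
Your proposal is correct and follows essentially the same approach as the paper: the paper's proof is the single sentence ``This follows from the fact that the polynomials $\binom X\alpha$, as $\alpha$ runs over the set of integer partitions, form a basis of $P$,'' and you have simply unpacked that sentence, making explicit the unitriangularity that pins down the degree filtration and the linearity step that feeds Theorem~\ref{theorem:moment} into the conclusion.
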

\begin{proof}
  This follows from the fact that the polynomials $\binom X\alpha$, as $\alpha$ runs over the set of integer partitions, form a basis of $P$.
\end{proof}
\begin{definition}
  [Stable moment]
  For a polynomial $q\in P$ we define the stable moment $\mom q$ of $q$ to be the eventually constant value of $\mom q_n$:
  \begin{displaymath}
    \mom q = \lim_{n\to \infty} \mom q_n.
  \end{displaymath}
\end{definition}
Let $V_n=V_{\lambda[n]}$, the Specht module of $S_n$ corresponding to the padded partition $\lambda[n]$.
It is well-known that $\{V_n\}$ is a family of representations with eventually polynomial character \cite[Proposition~I.1]{MR2576382}.
In other words, for every partition $\lambda$, there exists a polynomial $q_\lambda\in P$ such that
\begin{equation}
  \label{eq:qla}
  \chi_{\lambda[n]}(w) = q_\lambda(X_1(w),X_2(w),\dotsc) \text{ for } n\geq |\lambda|+\lambda_1,
\end{equation}
where $\chi_{\lambda[n]}$ denotes the character of the Specht module $V_{\lambda[n]}$.
Given three partitions $\lambda$, $\mu$, and $\nu$ of the same integer $k$, let $g_{\lambda\mu\nu}(n)$ denote the multiplicity of $V_{\lambda[n]}$ in $V_{\mu[n]}\otimes V_{\nu(n)}$.
Then
\begin{displaymath}
  g_{\lambda\mu\nu}(n) = \mom{q_\lambda q_\mu q_\nu}_n.
\end{displaymath}
By Corollary~\ref{corollary:stability}, $g_{\lambda\mu\nu}(n)$ is eventually constant, recovering a well-known theorem of Murnaghan (see \cite{MR95209}).
Church, Ellenberg, and Farb \cite[Section~3.4]{MR3357185} point out that this result can also be obtained by showing that the families $V_{\mu[n]}\otimes V_{\nu[n]}$ and $V_{\lambda[n]}$ come from finitely generated $\FI$-modules.

\subsection{Symmetric and Alternating Tensors}
\label{sec:symk-altk}
Let $\Sym^d$ and $\wedge^d$ denote the symmetric and alternating tensor functors respectively.
For every $n\geq 0$, $\Sym^d(K^n)$ and $\wedge^d(K^n)$ can be regarded as representations of $S_n$.
In this section, we will prove that they have polynomial character by direct computation.

We shall work with generating functions that live in the ring $P[[t]]$ of Laurent series in the variable $t$ and coefficients in $P$.
Let $\mch{X_i}j = \binom{X_i+j-1}j$.
The ring $P[[t]]$ admits expressions of the kind
\begin{align}
  \label{eq:pow1}
  (1-t^i)^{-X_i} & = \sum_{j\geq 0} \mch{X_i}j t^{ij},\\
  \label{eq:pow2}
  (1+(-t)^i)^{X_i} & = \sum_{j\geq 0} (-1)^{ij}\binom{X_i}j t^{ij},
\end{align}
which will be needed later.
Also, given $w\in S_n$ for any $n$ and $q\in P$, we write $q(w)=q(X_1(w),X_2(w),\dotsc)$.
\begin{theorem}
  \label{theorem:he}
  Let $\{H_d\}_{d=0}^\infty$ be the sequence of polynomials in $P$ defined by:
  \begin{equation}
    \label{eq:H}
    \sum_{d=0}^\infty H_dt^d = \prod_{i=1}^\infty(1-t^i)^{-X_i},
  \end{equation}
  an identity in the formal power series ring $P[[t]]$.
  
  As a consequence, for every $n\geq 1$ and every $w\in S_n$,
  \begin{displaymath}
    H_d(w) = \tr(w;\Sym^d(K^n)).
  \end{displaymath}
  Let $\{E_d\}_{d=0}^\infty$ be the sequence of polynomials in $P$ defined by:
  \begin{equation}
    \label{eq:E}
    \sum_{d=0}^\infty E_dt^d = \prod_{i=1}^\infty(1-(-t)^i)^{X_i}.
  \end{equation}
  Then for every $n\geq 1$ and every $w\in S_n$,
  \begin{displaymath}
    E_d(w) = \tr(w;\wedge^d(K^n)).
  \end{displaymath}
  For every positive integer $d$, we have:
  \begin{align}
    \label{eq:sym-char}
    H_d&=\sum_{\alpha\vdash d}\quad\prod_{i=1}^d\mch{X_i}{a_i},\\
    \label{eq:alt-char}
    E_d&=\sum_{\alpha\vdash d}\quad(-1)^{a_2+a_4+\dotsb}\prod_{i=1}^d\binom{X_i}{a_i}.
  \end{align}
\end{theorem}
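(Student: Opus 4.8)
The plan is to compute the trace of $w\in S_n$ on $\Sym^d(K^n)$ and $\wedge^d(K^n)$ directly from the cycle structure of $w$, and to package these traces into the generating functions \eqref{eq:H} and \eqref{eq:E}. First I would recall that for a permutation matrix $w$ acting on $K^n$ with $X_i(w)$ cycles of length $i$, the eigenvalues of $w$ are exactly the $i$-th roots of unity, each appearing with multiplicity $X_i(w)$, as $i$ ranges over the cycle lengths. Hence the characteristic data of $w$ is $\prod_i (1 - t^i)^{X_i(w)}$ in the sense that $\det(1 - tw) = \prod_i (1-t^i)^{X_i(w)}$. The standard identities $\sum_d \tr(w;\Sym^d V)\, t^d = 1/\det(1-tw)$ and $\sum_d \tr(w;\wedge^d V)\, t^d = \det(1+tw)$ then give
\begin{displaymath}
  \sum_{d\ge 0}\tr(w;\Sym^d(K^n))t^d = \prod_{i\ge 1}(1-t^i)^{-X_i(w)}, \qquad
  \sum_{d\ge 0}\tr(w;\wedge^d(K^n))t^d = \prod_{i\ge 1}(1-(-t)^i)^{X_i(w)},
\end{displaymath}
where in the second product I replace $\det(1+tw)=\prod_i(1-(-t)^i)^{X_i(w)}$ using that the eigenvalues in an $i$-cycle are the $i$-th roots of unity, so $\prod(1+t\zeta) = 1-(-t)^i$. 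Comparing with \eqref{eq:H} and \eqref{eq:E}, evaluated at $X_i \mapsto X_i(w)$ via \eqref{eq:pow1} and \eqref{eq:pow2} (which legitimize the binomial expansions of these products), shows that $H_d(w)$ and $E_d(w)$ are precisely these traces, proving the two displayed identities.

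For the explicit formulas \eqref{eq:sym-char} and \eqref{eq:alt-char}, I would expand the products \eqref{eq:H} and \eqref{eq:E} using \eqref{eq:pow1} and \eqref{eq:pow2}. Writing $(1-t^i)^{-X_i} = \sum_{a_i\ge 0}\mch{X_i}{a_i}t^{ia_i}$ and multiplying over $i$, the coefficient of $t^d$ collects exactly those tuples $(a_1,a_2,\dotsc)$ with $\sum_i i a_i = d$, i.e. partitions $\alpha = 1^{a_1}2^{a_2}\dotsb$ of $d$, giving $H_d = \sum_{\alpha\vdash d}\prod_i \mch{X_i}{a_i}$. For $E_d$, the same bookkeeping with $(1-(-t)^i)^{X_i} = \sum_{a_i\ge 0}(-1)^{ia_i}\binom{X_i}{a_i}t^{ia_i}$ produces a sign $\prod_i (-1)^{ia_i} = (-1)^{\sum_i i a_i}$; but $\sum_i i a_i = d$ for every $\alpha\vdash d$, which would make the sign depend only on $d$, not on $\alpha$. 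The correct accounting is that $(-1)^{ia_i} = (-1)^{a_i}$ when $i$ is odd and $=1$ when $i$ is even only if $i$ itself is even — more carefully, $(-t)^i = (-1)^i t^i$, so the factor is $(1 - (-1)^i t^i)^{X_i}$, which equals $(1-t^i)^{X_i}$ for even $i$ and $(1+t^i)^{X_i}$ for odd $i$; expanding, the sign attached to $\binom{X_i}{a_i}t^{ia_i}$ is $(-1)^{a_i}$ for even $i$ and $+1$ for odd $i$, yielding the total sign $(-1)^{a_2+a_4+\dotsb}$ as claimed.

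The main obstacle I anticipate is purely formal rather than conceptual: one must be careful that the infinite products \eqref{eq:H} and \eqref{eq:E} are well-defined in $P[[t]]$ — this works because, for each fixed $d$, only finitely many factors (those with $i\le d$) contribute to the coefficient of $t^d$ — and that passing from the formal identity in $P[[t]]$ to the numerical identity $H_d(w) = \tr(w;\Sym^d(K^n))$ is justified by specializing the formal variables $X_i$ to the integers $X_i(w)$, which is legitimate because \eqref{eq:pow1} and \eqref{eq:pow2} hold as identities in $P[[t]]$ and substitution of nonnegative integers for the $X_i$ is a ring homomorphism $P\to K$ compatible with the grading. The one genuinely delicate point is the sign analysis in \eqref{eq:alt-char}, where the parity of $i$ (not the product $ia_i$) governs the sign, so the argument must go through $(-t)^i = (-1)^i t^i$ rather than absorbing the sign into the exponent all at once.
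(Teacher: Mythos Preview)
Your proposal is correct, and the explicit formulas \eqref{eq:sym-char} and \eqref{eq:alt-char} are derived exactly as the paper does them. However, your derivation of the generating functions themselves takes a genuinely different route from the paper's.

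The paper argues combinatorially on the monomial bases: $\Sym^d K^n$ has a basis indexed by size-$d$ multisets from $[n]$, and $\tr(w;\Sym^d K^n)$ counts those multisets fixed by $w$; such a multiset is a multiset of cycles of $w$, and assigning weight $t^i$ to each $i$-cycle gives the Eulerian product $\prod_i(1-t^i)^{-X_i(w)}$ directly. Likewise $\wedge^d K^n$ has a basis indexed by size-$d$ subsets, and a fixed subset is a union of cycles, each $i$-cycle contributing the signed weight $-(-t)^i$. You instead diagonalize: the eigenvalues of $w$ on $K^n$ are the $i$th roots of unity (one full set per $i$-cycle), and you invoke the standard identities $\sum_d \tr(w;\Sym^d V)t^d = \det(1-tw)^{-1}$ and $\sum_d \tr(w;\wedge^d V)t^d = \det(1+tw)$, together with $\prod_{\zeta^i=1}(1-s\zeta)=1-s^i$. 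Your approach is shorter and connects cleanly to symmetric function theory; the paper's fixed-point count is more elementary and avoids any appeal to eigenvalues lying in an extension of $K$ (a harmless technicality in characteristic zero, but one you might acknowledge for completeness). One stylistic remark: your paragraph on the sign in \eqref{eq:alt-char} reads as a false start followed by a correction; in a final write-up you should go straight to the parity-of-$i$ case split, since the expansion $(1-(-t)^i)^{X_i}=\sum_{a_i}(-1)^{(i+1)a_i}\binom{X_i}{a_i}t^{ia_i}$ already gives the sign $(-1)^{a_2+a_4+\cdots}$ without detour.
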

\begin{proof}
  The standard basis of $K^n$ is indexed by the set $[n]=\{1,\dotsc,n\}$.
  The space $\Sym^d K^n$ has an induced basis indexed by multisets of size $d$ with elements drawn from $[n]$.
  The trace of $w\in S_n$ on $\Sym^d K^n$ is the number of such multisets that are fixed by $w$.
  In a multiset that is fixed by $w$, the elements in each cycle of $w$ appear with the same multiplicity.
  Hence a multiset fixed by $w$ may be regarded as a multiset of cycles of $w$.
  Assign weight $\wt(C)=t^i$ to each $i$-cycle $C$ of $w$.
  To a multiset $M = \{C_1^{m_1}\dotsb C_r^{m_r}\}$ of cycles of $w$, assign weight $\wt(M)=\prod_{1\leq j\leq r} \wt(C_j)^{m_j}$.
  Then
  \begin{displaymath}
    \sum_{d\geq 0} \tr(w; \Sym^d K^n)t^d  = \sum_M \wt(M),
  \end{displaymath}
  where the sum runs over all multisets $M$ of cycles of $w$.
  Each of the $X_i(w)$ $i$-cycles of $w$ contributes a factor $(1-t^i)^{-1}$ to this generating function, so
  \begin{displaymath}
    \sum_{d\geq 0} \tr(w; \Sym^d K^n)t^d  = \prod_{i\geq 1} (1-t^i)^{-X_i(w)}.
  \end{displaymath}

  Similarly, $\wedge^d K^n$ has a basis indexed by subsets of $[n]$ with $d$ elements.
  Only subsets fixed by $w$ contribute to the trace, and these are unions of the cycles of $w$.
  A cycle of length $i$ changes the sign of the corresponding basis vector by a factor of $-(-1)^i$.
  Assign weight $\wt(C)=-(-t)^i$ to each $i$-cycle $C$ of $w$.
  To a subset $N = \{C_1,\dotsc,C_r\}$ of cycles of $w$, assign weight $\wt(N) = \prod_{1\leq i\leq r} \wt(C_i)$.
  Then
  \begin{displaymath}
    \sum_{d\geq 0} \tr(w; \wedge^d K^n)t^d  = \sum_N \wt(N),
  \end{displaymath}
  where the sum runs over all subsets $N$ of the set of cycles of $w$.
  Each of the $X_i(w)$ $i$-cycles of $w$ contributes a factor $(1-(-t)^i)$ to this generating function, so
  \begin{displaymath}
    \sum_{d\geq 0} \tr(w; \wedge^d K^n)t^d  = \prod_{i\geq 1} (1-(-t)^i)^{X_i(w)}.
  \end{displaymath}
  Expansion of the products in \eqref{eq:H} and \eqref{eq:E} using (\ref{eq:pow1}) and (\ref{eq:pow2}) gives (\ref{eq:sym-char}) and (\ref{eq:alt-char}) respectively.
\end{proof}

\subsection{Character Polynomials of Weyl Modules}
\label{sec:char-polyn-weyl}
Applying the Jacobi--Trudi identities \cite[Section~I.3]{MR3443860} to the character polynomials of $\Sym^d$ and $\wedge^d$ gives character polynomials for Weyl functors.
For a partition $\lambda$, let $\lambda'$ denote its conjugate partition.
\begin{theorem}
  \label{theorem:char-poly-weyl}
  For every partition $\lambda$, the element of $P$ defined by
  \begin{equation}
    \label{eq:jt}
    S_\lambda = \det(H_{\lambda_i+j-i}) = \det(E_{\lambda'_i+j-i})
  \end{equation}
  is such that for every positive integer $n$ and every $w\in S_n$,
  \begin{displaymath}
    S_\lambda(w) = \tr(w;W_\lambda(K^n)).
  \end{displaymath}
\end{theorem}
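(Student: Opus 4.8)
The plan is to pull back the two classical Jacobi--Trudi identities for Schur polynomials along the map sending a permutation to its cycle-type data, using Theorem~\ref{theorem:he} to recognize $H_d$ and $E_d$ as evaluations of complete and elementary symmetric polynomials. Fix $n\geq 1$ and $w\in S_n$, and let $x_1,\dotsc,x_n$ be the eigenvalues of $w$ acting on $K^n$. Since the $GL_n(K)$-characters of $\Sym^d(K^n)$ and $\wedge^d(K^n)$ are $h_d$ and $e_d$ respectively, Theorem~\ref{theorem:he} gives $H_d(w)=h_d(x_1,\dotsc,x_n)$ and $E_d(w)=e_d(x_1,\dotsc,x_n)$. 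Likewise, in characteristic zero $W_\lambda(K^n)$ is the irreducible polynomial $GL_n(K)$-module of highest weight $\lambda$ when $\ell(\lambda)\leq n$ and is zero otherwise, so in either case $\tr(w;W_\lambda(K^n)) = s_\lambda(x_1,\dotsc,x_n)$, the last Schur polynomial vanishing identically when $\lambda$ has more than $n$ parts.

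Next I would apply the two Jacobi--Trudi identities \cite[Section~I.3]{MR3443860}, valid for symmetric polynomials in any number of variables, namely $s_\lambda = \det(h_{\lambda_i+j-i})$ (an $\ell(\lambda)\times\ell(\lambda)$ determinant) and $s_\lambda = \det(e_{\lambda'_i+j-i})$ (a $\lambda_1\times\lambda_1$ determinant). Substituting $x=(x_1,\dotsc,x_n)$ and using the identifications above yields, for every $n\geq 1$ and every $w\in S_n$,
\begin{align*}
  \det(H_{\lambda_i+j-i})(w) &= \det\bigl(h_{\lambda_i+j-i}(x)\bigr) = s_\lambda(x) \\
  &= \det\bigl(e_{\lambda'_i+j-i}(x)\bigr) = \det(E_{\lambda'_i+j-i})(w) = \tr(w;W_\lambda(K^n)).
\end{align*}
To upgrade the equality of the extreme determinants to an identity in $P$ (which is part of the assertion and makes the definition of $S_\lambda$ unambiguous), I would note that any $f\in P$ involves only finitely many variables $X_1,\dotsc,X_r$, that every $(a_1,\dotsc,a_r)\in\Zpos^r$ equals $(X_1(w),\dotsc,X_r(w))$ for a suitable $w\in S_{a_1+2a_2+\dotsb+ra_r}$, and that a polynomial vanishing on all of $\Zpos^r$ is zero; applying this to the difference of the two determinants gives $\det(H_{\lambda_i+j-i}) = \det(E_{\lambda'_i+j-i})$ in $P$. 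Denoting this common element by $S_\lambda$ then completes the proof.

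I do not anticipate a genuine obstacle: the whole argument is a change of variables applied to a classical identity. The only points requiring care are keeping the two versions of Jacobi--Trudi (and their determinant sizes) straight, and explicitly treating the degenerate range $n<\ell(\lambda)$, so that the statement holds for \emph{every} positive integer $n$ and not merely for large $n$; the latter is precisely why it is worth recording that both the Jacobi--Trudi determinant in $n$ variables and the Weyl module itself vanish there. A more conceptual variant of the same proof would instead introduce the ring homomorphism from symmetric functions to $P$ determined by $p_k\mapsto\sum_{i\mid k} iX_i$, check via \eqref{eq:H} and \eqref{eq:E} that it sends $h_d\mapsto H_d$ and $e_d\mapsto E_d$, and then simply apply it to the Jacobi--Trudi expansions of $s_\lambda$ in $\Lambda$; I would mention this as an alternative but carry out the variable-substitution version above since Theorem~\ref{theorem:he} already supplies exactly what is needed.
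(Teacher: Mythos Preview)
Your proposal is correct and follows the same approach the paper indicates: the paper does not give a separate proof environment for Theorem~\ref{theorem:char-poly-weyl}, relying instead on the one-line remark preceding it that the Jacobi--Trudi identities applied to the character polynomials $H_d$ and $E_d$ of Theorem~\ref{theorem:he} yield $S_\lambda$. Your write-up spells out exactly this, including the care you take with the degenerate range $n<\ell(\lambda)$ and the passage from pointwise equality on all $S_n$ to an identity in $P$; the alternative you mention via the homomorphism $p_k\mapsto\sum_{i\mid k}iX_i$ is precisely the map $\Phi$ introduced later in Section~\ref{sec:oz}.
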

\small{
\begin{table}[htbp]
  \def\arraystretch{1.3}
  \begin{center}
    \begin{tabular}{|p{.1\textwidth}|p{.8\textwidth}|}
      \hline
      $\lambda$ & $S_\lambda$\\
      \hline
      \multicolumn{2}{|c|}{$n= 1$}\\
      \hline
      $(1)$&$ X_{1} $\\
      \hline
      \multicolumn{2}{|c|}{$n=2$}\\
      \hline
      $(2)$&$ \frac{1}{2} X_{1}^{2} + \frac{1}{2} X_{1} + X_{2} $\\
      $(1^2)$&$ \frac{1}{2} X_{1}^{2} - \frac{1}{2} X_{1} -  X_{2} $\\
      \hline
      \multicolumn{2}{|c|}{$n=3$}\\
      \hline
      $(3)$&$ \frac{1}{6} X_{1}^{3} + \frac{1}{2} X_{1}^{2} + X_{1} X_{2} + \frac{1}{3} X_{1} + X_{3} $\\
      $(2, 1)$&$ \frac{1}{3} X_{1}^{3} - \frac{1}{3} X_{1} -  X_{3} $\\
      $(1^3)$&$ \frac{1}{6} X_{1}^{3} - \frac{1}{2} X_{1}^{2} -  X_{1} X_{2} + \frac{1}{3} X_{1} + X_{3} $\\
      \hline
      \multicolumn{2}{|c|}{$n=4$}\\
      \hline
      $(4)$&$ \frac{1}{24} X_{1}^{4} + \frac{1}{4} X_{1}^{3} + \frac{1}{2} X_{1}^{2} X_{2} + \frac{11}{24} X_{1}^{2} + \frac{1}{2} X_{1} X_{2} + \frac{1}{2} X_{2}^{2} + X_{1} X_{3} + \frac{1}{4} X_{1} + \frac{1}{2} X_{2} + X_{4} $\\
      $(3,1)$&$ \frac{1}{8} X_{1}^{4} + \frac{1}{4} X_{1}^{3} + \frac{1}{2} X_{1}^{2} X_{2} - \frac{1}{8} X_{1}^{2} - \frac{1}{2} X_{1} X_{2} - \frac{1}{2} X_{2}^{2} - \frac{1}{4} X_{1} - \frac{1}{2} X_{2} -  X_{4} $\\
      $(2,2)$&$ \frac{1}{12} X_{1}^{4} - \frac{1}{12} X_{1}^{2} + X_{1} X_{2} + X_{2}^{2} -  X_{1} X_{3} $\\
      $(2,1^2)$&$ \frac{1}{8} X_{1}^{4} - \frac{1}{4} X_{1}^{3} - \frac{1}{2} X_{1}^{2} X_{2} - \frac{1}{8} X_{1}^{2} - \frac{1}{2} X_{1} X_{2} - \frac{1}{2} X_{2}^{2} + \frac{1}{4} X_{1} + \frac{1}{2} X_{2} + X_{4} $\\
      $(1^4)$&$ \frac{1}{24} X_{1}^{4} - \frac{1}{4} X_{1}^{3} - \frac{1}{2} X_{1}^{2} X_{2} + \frac{11}{24} X_{1}^{2} + \frac{1}{2} X_{1} X_{2} + \frac{1}{2} X_{2}^{2} + X_{1} X_{3} - \frac{1}{4} X_{1} - \frac{1}{2} X_{2} -  X_{4} $\\
      \hline
      \multicolumn{2}{|c|}{$n=5$}\\
      \hline
      $(5)$&$ \frac{1}{120} X_{1}^{5} + \frac{1}{12} X_{1}^{4} + \frac{1}{6} X_{1}^{3} X_{2} + \frac{7}{24} X_{1}^{3} + \frac{1}{2} X_{1}^{2} X_{2} + \frac{1}{2} X_{1} X_{2}^{2} + \frac{1}{2} X_{1}^{2} X_{3} + \frac{5}{12} X_{1}^{2} + \frac{5}{6} X_{1} X_{2} + \frac{1}{2} X_{1} X_{3} + X_{2} X_{3} + X_{1} X_{4} + \frac{1}{5} X_{1} + X_{5} $\\
      $(4, 1)$&$ \frac{1}{30} X_{1}^{5} + \frac{1}{6} X_{1}^{4} + \frac{1}{3} X_{1}^{3} X_{2} + \frac{1}{6} X_{1}^{3} + \frac{1}{2} X_{1}^{2} X_{3} - \frac{1}{6} X_{1}^{2} - \frac{1}{3} X_{1} X_{2} - \frac{1}{2} X_{1} X_{3} -  X_{2} X_{3} - \frac{1}{5} X_{1} -  X_{5} $\\
      $(3, 2)$&$ \frac{1}{24} X_{1}^{5} + \frac{1}{12} X_{1}^{4} + \frac{1}{6} X_{1}^{3} X_{2} - \frac{1}{24} X_{1}^{3} + \frac{1}{2} X_{1}^{2} X_{2} + \frac{1}{2} X_{1} X_{2}^{2} - \frac{1}{2} X_{1}^{2} X_{3} - \frac{1}{12} X_{1}^{2} - \frac{1}{6} X_{1} X_{2} + \frac{1}{2} X_{1} X_{3} + X_{2} X_{3} -  X_{1} X_{4} $\\
      $(3, 1^2)$&$ \frac{1}{20} X_{1}^{5} - \frac{1}{4} X_{1}^{3} -  X_{1}^{2} X_{2} -  X_{1} X_{2}^{2} + \frac{1}{5} X_{1} + X_{5} $\\
      $(2^2, 1)$&$ \frac{1}{24} X_{1}^{5} - \frac{1}{12} X_{1}^{4} - \frac{1}{6} X_{1}^{3} X_{2} - \frac{1}{24} X_{1}^{3} + \frac{1}{2} X_{1}^{2} X_{2} + \frac{1}{2} X_{1} X_{2}^{2} - \frac{1}{2} X_{1}^{2} X_{3} + \frac{1}{12} X_{1}^{2} + \frac{1}{6} X_{1} X_{2} - \frac{1}{2} X_{1} X_{3} -  X_{2} X_{3} + X_{1} X_{4} $\\
      $(2, 1^3)$&$ \frac{1}{30} X_{1}^{5} - \frac{1}{6} X_{1}^{4} - \frac{1}{3} X_{1}^{3} X_{2} + \frac{1}{6} X_{1}^{3} + \frac{1}{2} X_{1}^{2} X_{3} + \frac{1}{6} X_{1}^{2} + \frac{1}{3} X_{1} X_{2} + \frac{1}{2} X_{1} X_{3} + X_{2} X_{3} - \frac{1}{5} X_{1} -  X_{5} $\\
      $(1^5)$&$ \frac{1}{120} X_{1}^{5} - \frac{1}{12} X_{1}^{4} - \frac{1}{6} X_{1}^{3} X_{2} + \frac{7}{24} X_{1}^{3} + \frac{1}{2} X_{1}^{2} X_{2} + \frac{1}{2} X_{1} X_{2}^{2} + \frac{1}{2} X_{1}^{2} X_{3} - \frac{5}{12} X_{1}^{2} - \frac{5}{6} X_{1} X_{2} - \frac{1}{2} X_{1} X_{3} -  X_{2} X_{3} -  X_{1} X_{4} + \frac{1}{5} X_{1} + X_{5} $\\
      \hline
    \end{tabular}
  \end{center}
  \caption{Character polynomials of Weyl modules}
  \label{table-char_poly}
\end{table}
}
The polynomials $S_\lambda$ for partitions $\lambda$ of integers at most $5$ are given in Table~\ref{table-char_poly}.
The highest degree coefficients in these expansions are character values of symmetric groups. More precisely, we have:
\begin{theorem}
  \label{theorem:binomial-coeff}
  Let $\lambda$ be a partition of a positive integer $d$.
  For every partition $\alpha=1^{a_1}2^{a_2}\dotsb$ of $d$, the coefficient of $\binom X\alpha$ in the expansion of $S_\lambda$ in the binomial basis (Definition~\ref{definition:binomial-basis}) is $\chi_\lambda(w_\alpha)$, where $w_\alpha$ is a permutation with cycle type $\alpha$.
\end{theorem}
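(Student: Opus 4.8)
The plan is to transport the classical power-sum expansion of Schur functions, $s_\lambda=\sum_{\alpha\vdash d}z_\alpha^{-1}\chi_\lambda(w_\alpha)p_\alpha$, into the ring $P$ and then compare top-degree parts. Introduce the $K$-algebra homomorphism $\Phi\colon\Lambda_K\to P$ out of the ring of symmetric functions with coefficients in $K$, defined on power sums by $\Phi(p_i)=\psi_i$ with $\psi_i:=\sum_{j\mid i}jX_j$; this is legitimate since $\Lambda_K=K[p_1,p_2,\dotsc]$ is a polynomial ring, and the motivation is that $\psi_i(w)$ is the $i$-th power sum of the eigenvalues of the permutation matrix $w$. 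The first step is to check $\Phi(h_d)=H_d$ and $\Phi(e_d)=E_d$. Applying $\Phi$ coefficientwise to the identity $\sum_dh_dt^d=\exp\bigl(\sum_ip_it^i/i\bigr)$ in $\Lambda_K[[t]]$ and using the elementary identity $-\sum_iX_i\log(1-t^i)=\sum_m(t^m/m)\sum_{j\mid m}jX_j=\sum_m\psi_mt^m/m$ identifies the result with $\prod_i(1-t^i)^{-X_i}=\sum_dH_dt^d$ from Theorem~\ref{theorem:he}; the computation for $E_d$ is entirely analogous. Since $\Phi$ is a ring homomorphism, the Jacobi--Trudi identity \eqref{eq:jt} then gives $S_\lambda=\Phi(s_\lambda)$.

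Next, work in the weighted grading of $P$ in which $X_i$ has degree $i$. Because $\psi_i=iX_i+(\text{terms of weighted degree}<i)$, for a partition $\alpha=1^{a_1}2^{a_2}\dotsb$ the element $\Phi(p_\alpha)=\prod_i\psi_i^{a_i}$ has weighted degree $|\alpha|$, with degree-$|\alpha|$ homogeneous component $\prod_i(iX_i)^{a_i}$. Applying $\Phi$ to the power-sum expansion of $s_\lambda$ therefore shows that $S_\lambda$ has weighted degree $\le d$, with degree-$d$ component $\sum_{\alpha\vdash d}z_\alpha^{-1}\chi_\lambda(w_\alpha)\prod_i(iX_i)^{a_i}$. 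On the other hand, writing $S_\lambda=\sum_\alpha c_\alpha\binom X\alpha$ for the expansion in the binomial basis of Definition~\ref{definition:binomial-basis}, only $\alpha$ with $|\alpha|\le d$ occur, and for $\alpha=1^{a_1}2^{a_2}\dotsb\vdash d$ the element $\binom X\alpha$ has weighted degree $d$ with leading monomial $\prod_iX_i^{a_i}/a_i!$; hence the degree-$d$ component of $S_\lambda$ is also $\sum_{\alpha\vdash d}c_\alpha\prod_iX_i^{a_i}/a_i!$. The monomials $\prod_iX_i^{a_i}$, as $\alpha$ runs over partitions of $d$, are linearly independent, so comparing the coefficient of $\prod_iX_i^{a_i}$ yields $c_\alpha/\prod_ia_i!=z_\alpha^{-1}\chi_\lambda(w_\alpha)\prod_ii^{a_i}=\chi_\lambda(w_\alpha)/\prod_ia_i!$, that is $c_\alpha=\chi_\lambda(w_\alpha)$, which is the assertion.

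I do not expect a genuine obstacle here; the point requiring care is purely bookkeeping, namely keeping the two gradings apart: the ordinary polynomial degree in the $X_i$, which governs the binomial-basis decomposition, versus the weighted degree, which is what matches the grading on $\Lambda$ under $\Phi$. In particular $\binom X\alpha$ lives in weighted degree $|\alpha|$ (not $\sum_ia_i$), and the equality $S_\lambda=\Phi(s_\lambda)$ is only filtered, not graded, so the argument legitimately pins down the coefficients $c_\alpha$ with $|\alpha|=d$ and says nothing about those with $|\alpha|<d$. The remaining ingredients — the formal $\exp$/$\log$ manipulation, the standard expansion of $s_\lambda$ in the $p_\alpha$, and the final comparison of monomial coefficients — are routine.
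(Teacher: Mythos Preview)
Your proof is correct, but it follows a genuinely different route from the paper's. The paper proves an auxiliary lemma computing the top binomial-basis coefficients of the products $H_\lambda=H_{\lambda_1}\dotsb H_{\lambda_l}$: these are the permutation characters $\sigma_\lambda(w_\alpha)$ of the Young permutation modules $K[X_\lambda]$, established by a fixed-point count on ordered set partitions. It then passes from $H_\lambda$ to $S_\lambda$ (and from $\sigma_\lambda$ to $\chi_\lambda$) by inverting the Kostka matrix. You instead bring in the ring homomorphism $\Phi:\Lambda\to P$, $p_k\mapsto\sum_{j\mid k}jX_j$, verify $\Phi(s_\lambda)=S_\lambda$ via the generating function identity for $H_d$, and then read off the leading binomial coefficients straight from the power-sum expansion $s_\lambda=\sum_\alpha z_\alpha^{-1}\chi_\lambda(w_\alpha)p_\alpha$, using that $\Phi$ is filtered (degree-non-increasing) with associated graded map $p_i\mapsto iX_i$. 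Your approach is shorter and bypasses the combinatorics of Young permutation modules entirely; it also anticipates the map $\Phi$ that the paper introduces only later in Section~\ref{sec:oz}, so in effect you are proving $\Phi(s_\lambda)=S_\lambda$ ahead of schedule. The paper's route, on the other hand, yields the standalone Lemma~\ref{lemma:highest-coeffs} about $H_\lambda$ and $\sigma_\lambda$, which has independent interest. One small phrasing quibble: the equality $S_\lambda=\Phi(s_\lambda)$ is exact; what is merely filtered is the homomorphism $\Phi$ itself.
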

The theorem will be a consequence of the following lemma:
\begin{lemma}
  \label{lemma:highest-coeffs}
  Let $\lambda$ be a partition of a positive integer $d$.
  For every partition $\alpha$ of $d$, the coefficient of $\binom X\alpha$ in the expansion of $H_\lambda$ in the binomial basis is $\sigma_\lambda(w_\alpha)$, the value of the character $\sigma_\lambda$ of the permutation representation of $S_d$ induced from the trivial representation of the Young subgroup $S_{\lambda_1}\times \dotsb \times S_{\lambda_l}$ (see \cite[Section~2.2]{MR644144} or \cite[Section~2.3]{MR3287258}) at a permutation $w_\alpha$ with cycle type $\alpha$.
\end{lemma}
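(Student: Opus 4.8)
Recall that $H_\lambda=H_{\lambda_1}H_{\lambda_2}\cdots H_{\lambda_l}$ is the product over the parts of $\lambda=(\lambda_1,\dots,\lambda_l)$, the character-polynomial analogue of the complete symmetric function $h_\lambda$. The plan is to extract the coefficient of $\binom X\alpha$ (with $|\alpha|=d$) by passing to the top-degree part of $H_\lambda$, where ``degree'' refers to the grading in which $X_i$ has degree $i$. First I would record the elementary fact that $\mch{X_i}{j}$ and $\binom{X_i}{j}$ are both polynomials of degree $j$ in $X_i$ with leading coefficient $1/j!$; consequently $\binom X\alpha=\prod_i\binom{X_i}{a_i}$ has leading term $\prod_i X_i^{a_i}/a_i!$, which is homogeneous of degree $|\alpha|$, and $\{\prod_i X_i^{a_i}/a_i!:\alpha\vdash d\}$ is a basis of the homogeneous component $P_d$. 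Since $H_\lambda$ has degree at most $d=|\lambda|$, and among the binomial basis vectors only the $\binom X\alpha$ with $|\alpha|=d$ contribute to $P_d$, the coefficient of $\binom X\alpha$ in $H_\lambda$ equals the coefficient of $\prod_i X_i^{a_i}/a_i!$ in the degree-$d$ homogeneous component of $H_\lambda$, which I denote $(H_\lambda)_d$. So it suffices to compute $(H_\lambda)_d$.

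Because $H_{\lambda_k}$ has degree at most $\lambda_k$ and $\sum_k\lambda_k=d$, taking the degree-$d$ part is multiplicative: $(H_\lambda)_d=\prod_k(H_{\lambda_k})_{\lambda_k}$. Next I would compute $(H_m)_m$ from \eqref{eq:H} (equivalently \eqref{eq:sym-char}): retaining only leading terms in each $\mch{X_i}{j}$ gives
\begin{displaymath}
  \sum_{m\geq0}(H_m)_m\,t^m=\prod_{i\geq1}\sum_{j\geq0}\frac{X_i^j}{j!}\,t^{ij}=\exp\Bigl(\sum_{i\geq1}X_it^i\Bigr),
\end{displaymath}
so by the exponential formula $(H_m)_m=\sum_{\beta\vdash m}\prod_i X_i^{\beta_i}/\beta_i!$, where $\beta=1^{\beta_1}2^{\beta_2}\cdots$. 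Multiplying these over the parts of $\lambda$ and reading off the coefficient of $\prod_i X_i^{a_i}/a_i!$ then expresses it as
\begin{displaymath}
  \sum\ \prod_{i\geq1}\binom{a_i}{\beta^{(1)}_i,\dots,\beta^{(l)}_i},
\end{displaymath}
the sum running over all tuples $(\beta^{(1)},\dots,\beta^{(l)})$ with $\beta^{(k)}\vdash\lambda_k$ for each $k$ and $\sum_k\beta^{(k)}_i=a_i$ for each $i$.

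Finally I would identify this sum with $\sigma_\lambda(w_\alpha)$. The permutation $w_\alpha$ has exactly $a_i$ cycles of length $i$; a tuple $(\beta^{(k)})$ as above, together with the $\prod_i\binom{a_i}{\beta^{(1)}_i,\dots,\beta^{(l)}_i}$ ways of realising it, is precisely a way of partitioning the cycles of $w_\alpha$ into an ordered list of blocks $B_1,\dots,B_l$ whose sizes are $\lambda_1,\dots,\lambda_l$ — that is, a $w_\alpha$-stable ordered set partition of $\{1,\dots,d\}$ of shape $(\lambda_1,\dots,\lambda_l)$. These ordered set partitions are exactly the cosets of the Young subgroup $S_{\lambda_1}\times\cdots\times S_{\lambda_l}$ in $S_d$, and the number of them fixed by $w_\alpha$ is by definition $\sigma_\lambda(w_\alpha)$; this finishes the proof. (Alternatively, the last paragraph can be replaced by the observation that $X_i\mapsto p_i/i$ is a graded ring isomorphism from $P$ onto the ring of symmetric functions sending $(H_m)_m$ to $h_m$, hence $(H_\lambda)_d$ to $h_\lambda$, and sending $\prod_i X_i^{a_i}/a_i!$ to $p_\alpha/z_\alpha$; the claim then follows from $h_\lambda=\mathrm{ch}(\sigma_\lambda)=\sum_{\alpha\vdash d}z_\alpha^{-1}\sigma_\lambda(w_\alpha)\,p_\alpha$.) There is no serious obstacle; the step needing the most care is the first one — checking that the binomial basis is unitriangular with respect to the grading, so that extracting the coefficient of $\binom X\alpha$ for $\alpha\vdash d$ is the same as extracting the coefficient of a monomial in the top homogeneous component — after which everything is a routine generating-function computation together with the standard combinatorial description of $\sigma_\lambda$.
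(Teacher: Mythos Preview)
Your proof is correct and follows essentially the same approach as the paper's: both isolate the top-degree part of $H_\lambda$ by replacing each multiset coefficient $\mch{X_i}{j}$ with its leading term $X_i^j/j!$, arrive at the same multinomial sum indexed by arrays $(\beta^{(k)}_i)$ (the paper's $(b_{ij})$) satisfying the row and column constraints, and identify this count with the number of $w_\alpha$-fixed ordered set partitions of shape $\lambda$. Your use of the exponential generating function $\exp\bigl(\sum_i X_it^i\bigr)$ and the optional symmetric-function alternative are pleasant packaging, but the underlying argument is the same.
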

\begin{proof}
  Consider the set of ordered set partitions of $[d]$ with parts of sizes given by $\lambda$:
  \begin{displaymath}
    X_\lambda = \{(T_1,\dotsc,T_l)\mid [d]=T_1\cup\dotsb \cup T_l \text{ is a set partition, with $|T_i|=\lambda_i$}\}.
  \end{displaymath}
  Let $K[X_\lambda]$ be the permutation representation associated to the action of $S_d$ on $X_\lambda$.
  This representation is isomorphic to the representation of $S_d$ induced from the trivial representation of its Young subgroup $S_{\lambda_1}\times \dotsb \times S_{\lambda_l}$.
  Therefore $\sigma_\lambda$ is the character of $K[X_\lambda]$, and $\sigma_\lambda(w_\alpha)=|X_\lambda^{w_\alpha}|$, the number of fixed points of a permutation $w_\alpha$ in $X_\lambda$.
  Take $(T_1,\dotsc,T_l)\in X_\lambda^{w_\alpha}$.
  Then each $T_i$ is formed by taking a union of cycles of $w_\alpha$.
  Suppose that $b_{ij}$ is the number of $j$ cycles of $w_\alpha$ in $T_i$.
  Then the array $(b_{ij})$ satisfies the constraints:
  \begin{gather}
    \label{eq:brow}
    b_{i1}+2b_{i2}+\dotsb = \lambda_i \text{ for each $i$},\\
    b_{1j}+b_{2j}+\dotsb = a_j \text{ for each $j$},
  \end{gather}
  where $a_j$ is the number of $j$-cycles in $\alpha$.
  Let $B(\lambda;\alpha)$ denote the set of such arrays.
  We have:
  \begin{equation}
    \label{eq:fixed-pt-sum}
    \tr(w_\alpha,K[X_\lambda]) = \sum_{(b_{ij})\in B(\lambda;\alpha)} \prod_j\binom{a_j}{b_{1j}\;b_{2j}\;\dotsb}.
  \end{equation}
  On the other hand, by \eqref{eq:sym-char},
  \begin{displaymath}
    H_\lambda = \sum_{b_{i1}+2b_{i2}+\dotsb=\lambda_i}\prod_{i\geq 1}\prod_{j\geq 1}\mch{X_j}{b_{ij}}.
  \end{displaymath}
  The terms of homogeneous degree $d$ in this product come from the top degree terms in each factor.
  When $|\alpha|=|\lambda|$, such a term has leading coefficient $\prod_j X_j^{a_j}$ if and only if $(b_{ij})\in B(\lambda;\alpha)$.
  Hence the coefficient of $\binom X\alpha$ is the expression on the right hand side of \eqref{eq:fixed-pt-sum} and the lemma follows.
\end{proof}
\begin{proof}
  [Proof of Theorem~\ref{theorem:binomial-coeff}]
  For partitions $\lambda$ and $\mu$ of $d$, let $K_{\mu\lambda}$ denote the number of semistandard Young tableaux of shape $\mu$ and weight $\lambda$.
  Then $K=(K_{\mu\lambda})$ is a unitriangular integer matrix with rows and columns indexed by partitions of $d$.
  We have:
  \begin{align}
    \label{eq:kostka1}
    H_\lambda &= \sum_\mu K_{\mu\lambda}S_\mu,\\
    \label{eq:kostka2}
    \sigma_\lambda & = \sum_\mu K_{\mu\lambda}\chi_\mu.
  \end{align}
  Let $K^{-1}_{\mu\lambda}$ be the entries of the inverse matrix $K^{-1}$.
  Then
  \begin{align*}
    S_\lambda & = \sum_\mu K_{\mu\lambda}^{-1}H_\mu & \text{by \eqref{eq:kostka1}}\\
              & \equiv \sum_\mu K^{-1}_{\mu\lambda}\sum_{\alpha} \sigma_\mu(\alpha)\binom X\alpha & \text{ignoring lower deg. terms (Lemma~\ref{lemma:highest-coeffs})}\\
              & = \sum_{\alpha\vdash d}\sum_\mu K^{-1}_{\mu\lambda}\sigma_\mu(\alpha) \binom X\alpha \\
              & = \sum_\alpha \chi_\lambda(\alpha)\binom X\alpha & \text{by \eqref{eq:kostka2}},
  \end{align*}
  thereby completing the proof of Theorem~\ref{theorem:binomial-coeff}.
\end{proof}
\begin{theorem}
  \label{theorem:weyl-char-poly-gen}
  For every partition $\lambda=(\lambda_1,\dotsc,\lambda_l)$, $S_\lambda$ is the coefficient of $t_1^{\lambda_1}\dotsb t_l^{\lambda_l}$ in
  \begin{displaymath}
    \prod_{i<j}(1-t_j/t_i)\prod_{r=1}^l \prod_{i\geq 1} (1-t_r^i)^{-X_i}.
  \end{displaymath}
\end{theorem}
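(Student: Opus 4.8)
The plan is to reduce the statement to the Jacobi--Trudi expression $S_\lambda = \det(H_{\lambda_i+j-i})_{1\le i,j\le l}$ from Theorem~\ref{theorem:char-poly-weyl}, and then recognize the claimed coefficient as the permutation expansion of this determinant. First I would rewrite the second factor using Theorem~\ref{theorem:he}: since $\sum_{d\ge 0}H_dt^d = \prod_{i\ge 1}(1-t^i)^{-X_i}$, we have
\begin{displaymath}
  \prod_{r=1}^l\prod_{i\ge 1}(1-t_r^i)^{-X_i} = \prod_{r=1}^l\Bigl(\sum_{d\ge 0}H_d t_r^d\Bigr),
\end{displaymath}
so that the coefficient of $t_1^{a_1}\dotsb t_l^{a_l}$ in this product is $\prod_{r=1}^l H_{a_r}$, with $H_0=1$ and, by the usual Jacobi--Trudi convention, $H_m=0$ for $m<0$ (consistent with the fact that only nonnegative powers of each $t_r$ occur in $\sum_d H_d t_r^d$).

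Next I would expand the Vandermonde-type factor. Starting from $\det(t_i^{l-j})_{1\le i,j\le l}=\prod_{i<j}(t_i-t_j)$ and dividing by $\prod_i t_i^{l-i}$ --- which is exactly the monomial appearing in the denominator of $\prod_{i<j}(1-t_j/t_i)=\prod_{i<j}(t_i-t_j)/t_i$ (the variable $t_i$ occurring once for each $j>i$) --- one obtains
\begin{displaymath}
  \prod_{i<j}\Bigl(1-\frac{t_j}{t_i}\Bigr) = \sum_{\sigma\in S_l}\sgn(\sigma)\prod_{i=1}^l t_i^{\,i-\sigma(i)},
\end{displaymath}
a Laurent polynomial, so there is no convergence issue when multiplying it by the formal power series above.

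Finally I would multiply the two expansions and extract the coefficient of $t_1^{\lambda_1}\dotsb t_l^{\lambda_l}$. For a fixed $\sigma\in S_l$, the monomial $\prod_i t_i^{\,i-\sigma(i)}$ contributes $\sgn(\sigma)\prod_i\bigl[t_i^{\lambda_i-i+\sigma(i)}\bigr]\bigl(\sum_d H_d t_i^d\bigr)=\sgn(\sigma)\prod_i H_{\lambda_i+\sigma(i)-i}$, where a negative subscript gives $0$ as above. Summing over $\sigma$ yields $\sum_{\sigma\in S_l}\sgn(\sigma)\prod_i H_{\lambda_i+\sigma(i)-i}=\det(H_{\lambda_i+j-i})$, which equals $S_\lambda$ by Theorem~\ref{theorem:char-poly-weyl}. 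I would also note that padding $\lambda$ with zeros to length $l$ (so that $\lambda_l$ may be $0$) is harmless.

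The calculations involved are light; the one point needing care is the bookkeeping that turns $\prod_{i<j}(1-t_j/t_i)$ into the signed sum over $S_l$ with exponents $i-\sigma(i)$ --- in particular tracking the denominator $\prod_i t_i^{l-i}$ correctly --- and confirming that the resulting exponents $\lambda_i-i+\sigma(i)$ match the Jacobi--Trudi subscripts, with the ``negative subscript $\Rightarrow 0$'' convention respected consistently on both sides.
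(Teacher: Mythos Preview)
Your proposal is correct and is essentially the paper's own argument run in reverse: the paper starts from $\sum_{\lambda\ge 0} S_\lambda t^\lambda$, expands the Jacobi--Trudi determinant as a signed sum over $S_l$, reindexes, and factors out the Vandermonde $\prod_{i<j}(1-t_j/t_i)$ and the product $\prod_r\sum_d H_d t_r^d$; you instead expand the product, multiply, and extract the coefficient of $t^\lambda$. The key ingredients (Theorem~\ref{theorem:he} for $\sum_d H_d t^d$, the permutation expansion of the Vandermonde, and Theorem~\ref{theorem:char-poly-weyl}) are identical, and your bookkeeping on the exponents $i-\sigma(i)$ and the convention $H_m=0$ for $m<0$ is handled correctly.
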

\begin{proof}
  For every vector $\lambda=(\lambda_1,\dotsc,\lambda_l)$ with non-negative integer coefficients, define:
  \begin{displaymath}
    S_\lambda = \det(H_{\lambda_i-i+j}).
  \end{displaymath}
  When $\lambda$ is a partition this coincides with the character polynomial of the Weyl module $W_\lambda$.
  Then, for every partition $\lambda$, $S_\lambda$ is the coefficient of $t^\lambda$ in $\sum_{\lambda\geq 0} S_\lambda t^\lambda$.
  Here $\lambda\geq 0$ indicates that the sum is over all vectors in $\ZZ_{\geq 0}^l$, and $t^\lambda = t_1^{\lambda_1}\dotsb t_l^{\lambda_l}$.
  Now
  \begin{align*}
    \sum_{\lambda\geq 0}S_\lambda t^\lambda & = \sum_{\lambda\geq 0}\sum_{w\in S_l} \sgn(w)\prod_{r=1}^l H_{\lambda_r-r+w(r)}t_r^{\lambda_r}\\
                                            & = \sum_{w\in S_l} \sgn(w) \prod_{r=1}^l t_r^{r-w(r)} \sum_{\lambda_r\geq 0} H_{\lambda_r-r+w(r)}t_r^{\lambda_r-r+w(r)}\\
                                            & = \sum_{w\in S_l} \sgn(w) \prod_{r=1}^l t_r^{r-w(r)} \sum_{\lambda_r\geq 0} H_{\lambda_r}t_r^{\lambda_r}\\
    & = \prod_{i<j}(1-t_j/t_i)\prod_{r=1}^l\prod_{i\geq 1}(1-t_r^i)^{-X_i}.
  \end{align*}
  Here we have used the convention that $H_d=0$ when $d<0$.
\end{proof}
\subsection{Duality}
\label{sec:duality}
Going through the entries of Table~\ref{table-char_poly}, the reader may have noticed that the coefficients in the expansion of $S_\lambda$ agree up to sign with those of $S_{\lambda'}$ for every partition $\lambda$.
For each vector $\mu=(\mu_1,\dotsc,\mu_m)$ with non-negative integer entries, let $X^\mu = X_1^{\mu_1}\dotsb X_m^{\mu_m}$, and $|\mu|=\mu_1+\dotsb+\mu_m$.
\begin{theorem}
  For every partition $\lambda$, if $S_\lambda = \sum_\mu a^\lambda_\mu X^\mu$, then $S_{\lambda'} = \sum_\mu (-1)^{|\lambda|-|\mu|}a^\lambda_\mu X^\mu$.
\end{theorem}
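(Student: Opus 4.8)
The plan is to derive the duality directly from the two Jacobi--Trudi formulas \eqref{eq:jt} for $S_\lambda$, via a substitution relating $E_d$ to $H_d$. The first step is to record the identity
\[
  E_d(X_1,X_2,\dotsc) = (-1)^d\, H_d(-X_1,-X_2,\dotsc)
\]
for all integers $d$ (with the convention $H_d = E_d = 0$ for $d<0$, where it holds trivially). This follows from Theorem~\ref{theorem:he}: replacing each $X_i$ by $-X_i$ in \eqref{eq:H} turns $\prod_{i\ge 1}(1-t^i)^{-X_i}$ into $\prod_{i\ge 1}(1-t^i)^{X_i}$, and then replacing $t$ by $-t$ yields $\prod_{i\ge 1}(1-(-t)^i)^{X_i}=\sum_{d\ge 0}E_d t^d$ by \eqref{eq:E}; comparing coefficients of $t^d$ gives the displayed relation.

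Next I would apply the second formula in \eqref{eq:jt} to the partition $\lambda'$, using $(\lambda')'=\lambda$, to obtain $S_{\lambda'}=\det(E_{\lambda_i+j-i})_{1\le i,j\le l}$, where $l$ is the number of parts of $\lambda$; this is a determinant of the same size as $S_\lambda=\det(H_{\lambda_i+j-i})_{1\le i,j\le l}$, so the two matrices have matching index patterns. Substituting the identity from the first step, the $(i,j)$ entry of the matrix for $S_{\lambda'}$ becomes $(-1)^{\lambda_i-i}(-1)^j\, H_{\lambda_i+j-i}(-X_1,-X_2,\dotsc)$. Pulling the scalar $(-1)^{\lambda_i-i}$ out of row $i$ and the scalar $(-1)^j$ out of column $j$, the remaining determinant is $\det(H_{\lambda_i+j-i}(-X_\bullet))=S_\lambda(-X_1,-X_2,\dotsc)$, and the total scalar extracted is $\bigl(\prod_{i=1}^l(-1)^{\lambda_i-i}\bigr)\bigl(\prod_{j=1}^l(-1)^j\bigr)=(-1)^{|\lambda|-l(l+1)/2}\cdot(-1)^{l(l+1)/2}=(-1)^{|\lambda|}$. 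Hence $S_{\lambda'}(X_1,X_2,\dotsc)=(-1)^{|\lambda|}\,S_\lambda(-X_1,-X_2,\dotsc)$.

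Finally, writing $S_\lambda=\sum_\mu a^\lambda_\mu X^\mu$ and substituting $X_i\mapsto -X_i$ scales the monomial $X^\mu$ by $(-1)^{|\mu|}$, so the previous identity reads $S_{\lambda'}=\sum_\mu(-1)^{|\lambda|+|\mu|}a^\lambda_\mu X^\mu=\sum_\mu(-1)^{|\lambda|-|\mu|}a^\lambda_\mu X^\mu$, which is the claim. I expect no serious obstacle: the only delicate point is the sign accounting when factoring rows and columns out of the determinant, together with the observation that the $H$-determinant for $\lambda$ and the $E$-determinant for $\lambda'$ are both $l\times l$, so that their entries are indexed identically by $\lambda_i+j-i$.
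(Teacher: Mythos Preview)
Your proof is correct and follows essentially the same approach as the paper: both arguments use the Jacobi--Trudi identities \eqref{eq:jt} together with the sign-relation between $H_d$ and $E_d$ coming from \eqref{eq:H} and \eqref{eq:E}. The only cosmetic difference is that the paper packages the sign-twist as a linear involution $\tau_d:X^\mu\mapsto(-1)^{d-|\mu|}X^\mu$ and applies it termwise to the Jacobi--Trudi expansion, whereas you realize the same map as the substitution $X_i\mapsto -X_i$ followed by a global sign $(-1)^{|\lambda|}$ and do the row/column sign extraction explicitly.
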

\begin{proof}
  Let $\tau_d:P\to P$ denote the linear involution defined by $X^\mu\mapsto (-1)^{d-|\mu|}X^\mu$.
  Comparing equations \eqref{eq:H} and \eqref{eq:E} shows that:
  \begin{displaymath}
    \tau_d(H_d)=E_d.
  \end{displaymath}
  It follows that, if $|\mu|=d$, then
  \begin{displaymath}
    \tau_d(H_{\mu_1}\dotsb H_{\mu_m}) = E_{\mu_1}\dotsb E_{\mu_m}.
  \end{displaymath}
  When $\lambda$ is a partition of $d$, then every term in the expansion of the Jacobi--Trudi determinants $\det(H_{\lambda_i+j-i})$ is of the form $H_\mu$ or $E_\mu$ for integer vector $\mu$ with $|\mu|=d$.
  Therefore $\tau_d(\det(H_{\lambda_i+j-i})) = \det(E_{\lambda_i+j-i})$.
  By the Jacobi--Trudi identities \eqref{eq:jt},
  \begin{displaymath}
    \tau_d(S_\lambda) = \tau_d(\det(H_{\lambda_i+j-i})) = \det(E_{\lambda_i+j-i}) = S_{\lambda'},
  \end{displaymath}
  as claimed.
\end{proof}
\section{The Restriction Problem}
\label{sec:relat-with-char}
\subsection{Character Polynomials of Specht Modules}
\label{sec:char-polyn-specht}
Recall that for partitions $\lambda$ and $\mu$, we say that $\lambda-\mu$ is a vertical strip if the Young diagram of $\mu$ is contained inside the Young diagram of $\lambda$, and each row of the Young diagram of $\lambda$ contains at most one box that is not in the Young diagram of $\mu$ \cite[Section~I.1]{MR3443860}.

For any partition $\lambda$, Macdonald \cite[Example~I.7.14(b)]{MR3443860} gave the character polynomials $q_\lambda\in P$ of \eqref{eq:qla} as follows:
\begin{equation}
  \label{eq:q_mu-mac}
  q_\lambda = \sum_{\{\mu\mid \lambda-\mu \text{ is a vertical strip}\}} (-1)^{|\lambda|-|\mu|} \sum_{\alpha \vdash |\mu|} \chi_\mu(\alpha)\binom X\alpha.
\end{equation}

It immediately follows that the leading coefficients of $q_\lambda$ in the binomial basis are the same as those of $S_\lambda$ (see Theorem~\ref{theorem:binomial-coeff}):
\begin{theorem}
  Let $\lambda$ be a partition of a positive integer $d$.
  For every partition $\alpha$ of $d$, the coefficient of $\binom X\alpha$ in the expansion of $q_\lambda$ in the binomial basis of $P$ is $\chi_\lambda(\alpha)$.
\end{theorem}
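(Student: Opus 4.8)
The plan is to read off the coefficient of $\binom X\alpha$ directly from Macdonald's formula \eqref{eq:q_mu-mac}, which already expresses $q_\lambda$ in the binomial basis, and then to throw away all contributions that are not of top degree. Grouping the terms of \eqref{eq:q_mu-mac} according to the partition $\alpha$ labelling $\binom X\alpha$, one finds that the coefficient of $\binom X\alpha$ in $q_\lambda$ equals
\begin{displaymath}
  \sum_{\substack{\mu\,:\,\lambda-\mu\text{ is a vertical strip}\\ |\mu|=|\alpha|}} (-1)^{|\lambda|-|\mu|}\,\chi_\mu(\alpha).
\end{displaymath}

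First I would observe that if $\lambda-\mu$ is a vertical strip then the Young diagram of $\mu$ is contained in that of $\lambda$, so $|\mu|\le|\lambda|$, with equality precisely when $\mu=\lambda$. Hence, when $\alpha$ is a partition of $d=|\lambda|$, the constraint $|\mu|=|\alpha|=d$ in the displayed sum leaves exactly one index, namely $\mu=\lambda$; the sign $(-1)^{|\lambda|-|\mu|}$ is then $(-1)^0=1$, and the sum collapses to $\chi_\lambda(\alpha)$. This is the asserted formula.

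Equivalently one can phrase this by degree: each $\binom X\alpha$ with $|\alpha|=d$ is homogeneous of degree $d$ in the grading in which $X_i$ has degree $i$, while the summands of \eqref{eq:q_mu-mac} with $|\mu|<d$ contribute only in degrees $<d$; so the degree-$d$ part of $q_\lambda$ is $\sum_{\alpha\vdash d}\chi_\lambda(\alpha)\binom X\alpha$, which in particular coincides with the leading part of $S_\lambda$ found in Theorem~\ref{theorem:binomial-coeff}, making the remark preceding the statement transparent. There is no genuine obstacle in the argument; the single point worth checking is the uniqueness of $\mu=\lambda$ among partitions with $\lambda-\mu$ a vertical strip of full size $d$, and that is immediate from the definition of a vertical strip recalled at the beginning of this subsection. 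Thus the whole proof is a two-line consequence of Macdonald's formula.
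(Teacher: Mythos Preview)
Your proposal is correct and is exactly the argument the paper has in mind: the statement is introduced with ``It immediately follows\ldots'' right after displaying Macdonald's formula \eqref{eq:q_mu-mac}, and your observation that the only $\mu$ with $\lambda-\mu$ a vertical strip and $|\mu|=|\lambda|$ is $\mu=\lambda$ is precisely the immediate deduction intended. There is nothing to add.
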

\begin{corollary}
  \label{corollary:qla-leading}
  The sets:
  \begin{align*}
    \mathbf S & = \{S_\lambda \mid \lambda \text{ is an integer partition}\},\\
    \mathbf q & = \{q_\lambda \mid \lambda \text{ is an integer partition}\}
  \end{align*}
  are bases of $P$.
\end{corollary}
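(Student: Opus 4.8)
The plan is to deduce the corollary from the two theorems that precede it, which compute the ``top'' coefficients of $S_\lambda$ and $q_\lambda$ in the binomial basis, together with the invertibility of the character tables of the symmetric groups. Throughout I use the grading of $P$ in which $X_i$ has degree $i$, and I write $P_{\leq d}$ for the finite-dimensional subspace spanned by the monomials $\prod_i X_i^{a_i}$ with $\sum_i i a_i \leq d$; thus $P=\bigcup_{d\geq 0}P_{\leq d}$. Since the top-degree component of $\binom X\alpha$ is the nonzero scalar multiple $\big(\prod_i a_i!\big)^{-1}\prod_i X_i^{a_i}$ of a monomial of degree $|\alpha|$, the set $\{\binom X\alpha\mid |\alpha|\leq d\}$ is a linearly independent subset of $P_{\leq d}$ of cardinality $\dim P_{\leq d}$, hence a basis of $P_{\leq d}$.

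Next I would check that $S_\lambda$ and $q_\lambda$ lie in $P_{\leq|\lambda|}$. For $q_\lambda$ this is immediate from Macdonald's formula \eqref{eq:q_mu-mac}, which writes $q_\lambda$ as a combination of the $\binom X\alpha$ with $|\alpha|\leq|\lambda|$. For $S_\lambda$ it follows by expanding the Jacobi--Trudi determinant: \eqref{eq:sym-char} shows $H_e\in P_{\leq e}$, and in each summand $\sgn(w)\prod_r H_{\lambda_r+w(r)-r}$ of $\det(H_{\lambda_i+j-i})$ the subscripts sum to $|\lambda|$, so the summand lies in $P_{\leq|\lambda|}$. Now fix $d$ and let $M$ be the matrix, with rows indexed by partitions $\alpha$ of size $\leq d$ and columns by partitions $\lambda$ of size $\leq d$ (each index set ordered so that smaller size comes first), whose $(\alpha,\lambda)$ entry is the coefficient of $\binom X\alpha$ in $S_\lambda$. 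By the degree estimate just made this entry vanishes when $|\alpha|>|\lambda|$, so $M$ is block triangular, and by Theorem~\ref{theorem:binomial-coeff} its diagonal block in size $e$ is the matrix $\big(\chi_\lambda(\alpha)\big)_{\alpha,\lambda\vdash e}$, a transpose of the character table of $S_e$ and hence invertible. Therefore $M$ is invertible, so $\{S_\lambda\mid|\lambda|\leq d\}$ is another basis of $P_{\leq d}$; since these bases are nested as $d$ grows and exhaust $P$, the set $\mathbf S$ is a basis of $P$. Running the same argument with \eqref{eq:q_mu-mac} in place of \eqref{eq:sym-char} and the theorem stated just above in place of Theorem~\ref{theorem:binomial-coeff} shows that $\mathbf q$ is a basis of $P$.

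I do not anticipate a real obstacle: the content is entirely in the two preceding theorems, and the remaining work is the bookkeeping that $S_\lambda$ and $q_\lambda$ carry no terms of degree exceeding $|\lambda|$, together with the elementary passage from the compatible family of finite-dimensional bases $\{S_\lambda\mid|\lambda|\leq d\}$ of the spaces $P_{\leq d}$ to a basis of $P$ itself.
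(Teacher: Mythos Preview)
Your proposal is correct and follows essentially the same approach as the paper: both use the fact that the top-degree binomial coefficients of $S_\lambda$ and $q_\lambda$ are character values, together with the nonsingularity of the character table of $S_d$, to obtain a triangular change of basis. The paper phrases this via the graded pieces $P_d$ and leading homogeneous parts, whereas you phrase it via the filtration $P_{\leq d}$ and a block-triangular matrix; these are equivalent formulations of the same argument.
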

\begin{proof}
  Regard $P$ as a graded algebra where the degree of $X_i$ is $i$ for each $i\geq 1$.
  Let $P_d$ denote the homogeneous elements of degree $d$ in $P$.
  The degree $d$ homogeneous parts of $\binom X\alpha$, as $\alpha$ runs over all partitions of $d$, form a basis of $P_d$.
  Theorem~\ref{theorem:binomial-coeff} and the identity (\ref{eq:q_mu-mac}) imply that the degree $d$ homogeneous parts of $S_\lambda$ and $q_\lambda$ also form such a basis as $\lambda$ runs over all partitions of $d$, since the character table of $S_d$ forms a non-singular matrix.
  Therefore $\mathbf S$ and $\mathbf q$ are bases of $P$.
\end{proof}
\subsection{Stable Restriction Coefficients}
The coefficients in the expansion of the elements of the basis $\mathbf S$ in terms of the basis $\mathbf q$:
\begin{displaymath}
  S_\lambda = \sum_\mu r_{\lambda\mu} q_\mu
\end{displaymath}
are called the stable restriction coefficients.
They determine the decomposition of a Weyl module $W_\lambda(K^n)$ into irreducible representations of $S_n$:
\begin{displaymath}
  \Res^{GL_n(K)}_{S_n}W_\lambda(K^n) = \bigoplus_\mu V_{\mu[n]}^{\oplus r_{\lambda\mu}}.
\end{displaymath}
The following result, which is now immediate, is an algorithm for computing the stable restriction coefficients:
\begin{theorem}
  \label{theorem:stable-res}
  For any partitions $\lambda$ and $\mu$,
  \begin{displaymath}
    r_{\lambda\mu} = \mom{S_\lambda q_\mu}.
  \end{displaymath}
\end{theorem}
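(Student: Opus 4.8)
The plan is to show that the stable restriction coefficient $r_{\lambda\mu}$, defined as the coefficient of $q_\mu$ when $S_\lambda$ is written in the basis $\mathbf q$, equals the stable moment $\mom{S_\lambda q_\mu}$. The key input is that the stable moment, regarded as a bilinear pairing $(p,p')\mapsto \mom{pp'}$ on $P$, makes the bases $\mathbf S$ and $\mathbf q$ almost dual to one another; more precisely, I expect $\mom{S_\lambda q_\mu}=\delta_{\lambda\mu}$ for all partitions $\lambda,\mu$.

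First I would record the representation-theoretic meaning of the moment. By Theorem~\ref{theorem:char-poly-weyl}, $S_\lambda(w)=\tr(w;W_\lambda(K^n))$ for every $n$ and $w\in S_n$, and by \eqref{eq:qla}, $q_\mu(w)=\chi_{\mu[n]}(w)$ for $n\geq |\mu|+\mu_1$. Hence for such $n$,
\begin{displaymath}
  \mom{S_\lambda q_\mu}_n = \frac{1}{n!}\sum_{w\in S_n}\tr(w;W_\lambda(K^n))\,\chi_{\mu[n]}(w) = \langle \Res^{GL_n(K)}_{S_n}W_\lambda(K^n),\, V_{\mu[n]}\rangle_n = r_{\lambda\mu}(n).
\end{displaymath}
By Corollary~\ref{corollary:stability} the left side stabilizes to $\mom{S_\lambda q_\mu}$, and by the discussion preceding Theorem~\ref{theorem:stable-res} the right side stabilizes to the stable restriction coefficient $r_{\lambda\mu}$ appearing in the decomposition of $W_\lambda(K^n)$. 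So $\mom{S_\lambda q_\mu}=r_{\lambda\mu}$, where here $r_{\lambda\mu}$ is the \emph{multiplicity}; it remains to identify this multiplicity with the change-of-basis coefficient in $S_\lambda=\sum_\mu r_{\lambda\mu}q_\mu$.

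For that I would expand $S_\lambda$ in the basis $\mathbf q$ (legitimate by Corollary~\ref{corollary:qla-leading}), write $S_\lambda=\sum_\nu c_{\lambda\nu}q_\nu$, and compute
\begin{displaymath}
  \mom{S_\lambda q_\mu} = \sum_\nu c_{\lambda\nu}\mom{q_\nu q_\mu}.
\end{displaymath}
So everything reduces to the single identity $\mom{q_\nu q_\mu}=\delta_{\nu\mu}$. This is the orthonormality of Specht characters under the stable inner product: for $n$ large, $\mom{q_\nu q_\mu}_n=\langle V_{\nu[n]},V_{\mu[n]}\rangle_n=\delta_{\nu\mu}$ since distinct padded partitions $\nu[n]\neq\mu[n]$ give inequivalent irreducibles, and passing to the limit via Corollary~\ref{corollary:stability} gives $\mom{q_\nu q_\mu}=\delta_{\nu\mu}$. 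Feeding this back yields $\mom{S_\lambda q_\mu}=c_{\lambda\mu}=r_{\lambda\mu}$, as desired. (Alternatively, one can make the Specht-orthonormality explicit through Theorem~\ref{theorem:moment} and Macdonald's formula \eqref{eq:q_mu-mac}, but the representation-theoretic route is cleaner.)

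The main obstacle is bookkeeping around stabilization ranges: $q_\mu$ only represents $\chi_{\mu[n]}$ for $n\geq |\mu|+\mu_1$, whereas $S_\lambda$ represents the Weyl-module character for all $n$, so one must take $n$ large enough for \emph{all} the relevant Specht characters simultaneously before invoking Corollary~\ref{corollary:stability}; since only finitely many $\mu$ have $q_\mu$ occurring with nonzero coefficient in $S_\lambda$ (those with $|\mu|\le|\lambda|$), this is harmless but should be stated. The only other point needing care is the claim that $\nu[n]\neq\mu[n]$ implies $\langle V_{\nu[n]},V_{\mu[n]}\rangle_n=0$, which is immediate from Schur's lemma once $n$ is large enough that both padded partitions are genuine partitions.
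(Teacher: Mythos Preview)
Your proposal is correct and follows the natural line of reasoning the paper has in mind. The paper gives no explicit proof (it declares the result ``immediate'' after defining $r_{\lambda\mu}$ as the change-of-basis coefficients and asserting that these govern the decomposition of $W_\lambda(K^n)$), and your argument supplies precisely the details one would expect: the moment $\mom{S_\lambda q_\mu}_n$ computes $\langle \Res^{GL_n(K)}_{S_n}W_\lambda(K^n),V_{\mu[n]}\rangle_n$ for large $n$, and stability (Corollary~\ref{corollary:stability}) passes this to the limit. Your extra step---verifying $\mom{q_\nu q_\mu}=\delta_{\nu\mu}$ via Schur orthogonality for the padded Specht modules---closes the loop between the two meanings of $r_{\lambda\mu}$ (multiplicity versus change-of-basis coefficient), something the paper takes for granted but does not spell out. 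Your remarks on stabilization ranges are apt and do not affect the argument.
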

The polynomial $S_\lambda$ can be computed using Theorem~\ref{theorem:char-poly-weyl}, $q_\mu$ using \eqref{eq:q_mu-mac}.
After expanding the product in the binomial basis, the moment can be computed using Theorem~\ref{theorem:moment}.
The matrix of the stable restriction coefficients $r_{\lambda\mu}$, as $\lambda$ and $\mu$ run over partitions of $0\leq n\leq 5$ is given by:
\begin{displaymath}
  \left(
    \begin{array}{r|r|rr|rrr|rrrrr|rrrrrrr}
      1 & 0 & 0 & 0 & 0 & 0 & 0 & 0 & 0 & 0 & 0 & 0 & 0 & 0 & 0 & 0 & 0 & 0 & 0 \\
      \hline
      1 & 1 & 0 & 0 & 0 & 0 & 0 & 0 & 0 & 0 & 0 & 0 & 0 & 0 & 0 & 0 & 0 & 0 & 0 \\
      \hline
      2 & 2 & 1 & 0 & 0 & 0 & 0 & 0 & 0 & 0 & 0 & 0 & 0 & 0 & 0 & 0 & 0 & 0 & 0 \\
      0 & 1 & 0 & 1 & 0 & 0 & 0 & 0 & 0 & 0 & 0 & 0 & 0 & 0 & 0 & 0 & 0 & 0 & 0 \\
      \hline
      3 & 4 & 2 & 1 & 1 & 0 & 0 & 0 & 0 & 0 & 0 & 0 & 0 & 0 & 0 & 0 & 0 & 0 & 0 \\
      1 & 3 & 2 & 2 & 0 & 1 & 0 & 0 & 0 & 0 & 0 & 0 & 0 & 0 & 0 & 0 & 0 & 0 & 0 \\
      0 &0 & 0 & 1 & 0 & 0 & 1 & 0 & 0 & 0 & 0 & 0 & 0 & 0 & 0 & 0 & 0 & 0 & 0 \\
      \hline
      5 & 7 & 5 & 2 & 2 & 1 & 0 & 1 & 0 & 0 & 0 & 0 & 0 & 0 & 0 & 0 & 0 & 0 & 0 \\
      2 & 7 & 5 & 6 & 2 & 3 & 1 & 0 & 1 & 0 & 0 & 0 & 0 & 0 & 0 & 0 & 0 & 0 & 0 \\
      2 & 3 & 4 & 1 & 1 & 2 & 0 & 0 & 0 & 1 & 0 & 0 & 0 & 0 & 0 & 0 & 0 & 0 & 0 \\
      0 & 1 & 1 & 3 & 0 & 2 & 2 & 0 & 0 & 0 & 1 & 0 & 0 & 0 & 0 & 0 & 0 & 0 & 0 \\
      0 & 0 & 0 & 0 & 0 & 0 & 1 & 0 & 0 & 0 & 0 & 1 & 0 & 0 & 0 & 0 & 0 & 0 & 0 \\
      \hline
      7 & 12 & 9 & 5 & 5 & 3 & 0 & 2 & 1 & 0 & 0 & 0 & 1 & 0 & 0 & 0 & 0 & 0 & 0 \\
      5 & 14 & 13 & 12 & 6 & 9 & 3 & 2 & 3 & 1 & 1 & 0 & 0 & 1 & 0 & 0 & 0 & 0 & 0 \\
      4 & 10 & 11 & 8 & 6 & 8 & 2 & 1 & 3 & 2 & 1 & 0 & 0 & 0 & 1 & 0 & 0 & 0 & 0 \\
      0 & 3 & 4 & 8 & 1 & 7 & 6 & 0 & 2 & 1 & 3 & 1 & 0 & 0 & 0 & 1 & 0 & 0 & 0 \\
      1 & 3 & 4 & 3 & 2 & 5 & 1 & 0 & 1 & 2 & 2 & 0 & 0 & 0 & 0 & 0 & 1 & 0 & 0 \\
      0 & 0 & 0 & 1 & 0 & 1 & 3 & 0 & 0 & 0 & 2 & 2 & 0 & 0 & 0 & 0 & 0 & 1 & 0 \\
      0 & 0 & 0 & 0 & 0 & 0 & 0 & 0 & 0 & 0 & 0 & 1 & 0 & 0 & 0 & 0 & 0 & 0 & 1
    \end{array}
  \right)
\end{displaymath}
The blocks demarcate the partitions of each integer $n$, and within each block, the partitions of $n$ are enumerated in reverse lexicographic order.
\subsection{Relation to Symmetric Functions}
\label{sec:oz}
Let $\Lambda$ denote the ring of symmetric functions (as in \cite[Section~I.2]{MR3443860}).
Macdonald \cite[Example~I.7.13]{MR3443860} constructed an isomorphism $\phi:\Lambda\to P$ taking the Schur function $s_\lambda$ to the character polynomial $q_\lambda$.
In this section we study a different isomorphism $\Phi:\Lambda\to P$, due to Orellana and Zabrocki \cite{2016arXiv160506672O}, which takes $s_\lambda$ to $S_\lambda$.
Under this isomorphism $q_\lambda$ is the image of $\tilde s_\lambda$ the \emph{Specht symmetric functions} of \cite{assaf-speyer,2016arXiv160506672O}.

Following \cite[Proposition~12]{2016arXiv160506672O}, define an algebra homomorphism\linebreak $\Phi:\Lambda\to P$ by:
\begin{equation}
  \Phi:p_k\mapsto \sum_{d|k} dX_d.
\end{equation}
For each $k>0$, define
\begin{displaymath}
  \Xi_k = 1, e^{2\pi i/k}, e^{4\pi i/k},\dotsc e^{2(k-1)\pi i/k}, 
\end{displaymath}
and for an integer partition $\mu=(\mu_1,\dotsc,\mu_m)$,
\begin{displaymath}
  \Xi_\mu = \Xi_{\mu_1},\dotsc,\Xi_{\mu_m}.
\end{displaymath}

Let $R_n$ denote the space of $K$-valued class functions on $S_n$.
For every $n\geq 0$ there is a map $\ev_\Lambda^n:\Lambda\to R_n$ defined by:
\begin{displaymath}
  \ev_\Lambda^n f(w) = f(\Xi_\mu),
\end{displaymath}
where $\mu$ is the cycle type of $w$.
In other words, the symmetric function is evaluated on $|\mu|$ variables, whose values are given by the list $\Xi_\mu$, the remaining variables being set to $0$.
With this definition, $\ev^n_\Lambda(s_\lambda)$ is the character of $\Res^{GL_n(K)}_{S_n} W_\lambda(K^n)$.

For each $q\in P$ consider the function $\ev_P^n(q)\in R_n$ given by:
\begin{displaymath}
  \ev_P^n(q)(w) = q(X_1(w),X_2(w),\dotsc).
\end{displaymath}
This defines a ring homomorphism $\ev_P^n:P\to R_n$.

Observe that $\oplus_{n=1}^\infty \ev_P^n:P\to \oplus_{n=1}^\infty R_n$ is injective, for if $\ev_P^n(q)\equiv 0$ for all $n$, then $q$ vanishes whenever $X_1,X_2,\dotsc$ take non-negative integer values, and hence $q$ must be identically $0$.

\begin{theorem}
  \label{theorem:Phi}
  The algebra homomorphism $\Phi$ is the unique $K$-linear map $\Lambda\to P$ such that the  diagram
  \begin{equation}
    \label{eq:oznpps}
    \xymatrix{
      \Lambda\ar[rr]^\Phi\ar[dr]_{\ev_\Lambda^n}& & P\ar[dl]^{\ev_P^n}\\
      & R_n &
    }
  \end{equation}
  commutes for every $n\geq 1$.
\end{theorem}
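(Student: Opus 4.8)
The plan is to verify commutativity of the diagram~\eqref{eq:oznpps} on the power-sum generators $p_k$, since $\{p_k\}$ generates $\Lambda$ as an algebra, and both $\Phi$ and the composites are algebra homomorphisms; uniqueness of $\Phi$ as a \emph{linear} map then follows because $\oplus_n \ev_\Lambda^n$ is injective (the same argument as for $\ev_P^n$: a symmetric function vanishing on $\Xi_\mu$ for all $\mu$ and all $n$ must vanish on arbitrary tuples of roots of unity, hence identically). So it suffices to check that for every $k\geq 1$ and every $n\geq 1$ and every $w\in S_n$,
\begin{displaymath}
  p_k(\Xi_\mu) = \Big(\sum_{d\mid k} dX_d\Big)(X_1(w),X_2(w),\dotsc) = \sum_{d\mid k} d\,X_d(w),
\end{displaymath}
where $\mu$ is the cycle type of $w$.

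First I would unpack the left-hand side. By definition $\ev_\Lambda^n(p_k)(w) = p_k(\Xi_\mu)$, which is the $k$-th power sum evaluated on the variables listed in $\Xi_\mu = (\Xi_{\mu_1},\dotsc,\Xi_{\mu_m})$ (padded with zeros). Since $p_k$ is additive over concatenation of variable lists, $p_k(\Xi_\mu) = \sum_{j=1}^m p_k(\Xi_{\mu_j})$, so it is enough to evaluate $p_k(\Xi_\ell)$ for a single cycle length $\ell$. Now $\Xi_\ell$ is the list of all $\ell$-th roots of unity, so $p_k(\Xi_\ell) = \sum_{\zeta^\ell = 1}\zeta^k$. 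This is the standard character-of-a-cyclic-group sum: it equals $\ell$ if $\ell\mid k$ and $0$ otherwise. Therefore $p_k(\Xi_\mu) = \sum_{j:\,\mu_j\mid k}\mu_j$.

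Next I would match this against the right-hand side. Grouping the parts $\mu_j$ of $\mu$ by their common value $d$, the number of parts equal to $d$ is exactly $X_d(w)$, so $\sum_{j:\,\mu_j\mid k}\mu_j = \sum_{d\mid k} d\,X_d(w) = \big(\sum_{d\mid k} dX_d\big)(w) = \ev_P^n(\Phi(p_k))(w)$. This establishes commutativity on generators; since the three maps $\ev_\Lambda^n$, $\Phi$, $\ev_P^n$ are all algebra homomorphisms, commutativity on the generators $p_k$ propagates to all of $\Lambda$, giving $\ev_P^n\circ\Phi = \ev_\Lambda^n$ for every $n$. For uniqueness: if $\Psi:\Lambda\to P$ is any $K$-linear map making the diagram commute for all $n$, then $\ev_P^n(\Psi(f) - \Phi(f)) = 0$ for all $n$ and all $f$, and since $\oplus_n\ev_P^n$ is injective we get $\Psi = \Phi$.

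The main obstacle is essentially bookkeeping rather than conceptual: one must be careful that $\Xi_k$ as defined is literally the multiset of $k$-th roots of unity (it is, reading off $1, e^{2\pi i/k},\dotsc,e^{2(k-1)\pi i/k}$), and that ``evaluating a symmetric function on $\Xi_\mu$ with the remaining variables set to $0$'' genuinely makes the evaluation map multiplicative and additive on power sums in the way used above. One should also note explicitly that although $\Phi$ is defined over $\mathbf{C}$-valued data (roots of unity), the resulting polynomials $\sum_{d\mid k} dX_d$ have rational (indeed integer) coefficients, so everything is consistent with $K$ of characteristic $0$; and that the injectivity argument for $\oplus_n\ev_\Lambda^n$ needs the Vandermonde-type observation that a symmetric function vanishing at all tuples of roots of unity of all orders vanishes identically, which follows since such tuples are Zariski dense.
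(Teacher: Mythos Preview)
Your proof is correct and follows essentially the same approach as the paper: both verify commutativity on the power-sum generators $p_k$ via the identity $\sum_{\zeta^d=1}\zeta^k = d$ if $d\mid k$ and $0$ otherwise, then extend multiplicatively, and deduce uniqueness from the injectivity of $\oplus_n \ev_P^n$. Your uniqueness argument (applying injectivity directly to $\Psi(f)-\Phi(f)$ for arbitrary $f$) is in fact slightly cleaner than the paper's phrasing; note also that the injectivity of $\oplus_n \ev_\Lambda^n$ you mention in your first paragraph is not actually needed---only $\oplus_n \ev_P^n$ is used, as you correctly do at the end.
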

\begin{proof}
  From the definition of $\ev^n_\Lambda$,
  \begin{displaymath}
    \ev_\Lambda^n(p_k)(w) = \sum_d X_d(w) \sum_{j=0}^{d-1}(e^{2\pi i/d})^{jk}.
  \end{displaymath}
  Now observe that
  \begin{displaymath}
    \sum_{j=0}^{d-1} (e^{2\pi i/d})^{jk} =
    \begin{cases}
      d & \text{if } d|k,\\
      0 & \text{otherwise}.
    \end{cases}
  \end{displaymath}
  It follows that
  \begin{displaymath}
    \ev_\Lambda^n(p_k)(w) = \sum_{d|k} dX_d(w) = \ev_P^n(\Phi(p_k)).
  \end{displaymath}
  Since $\oplus_n \ev_P^n:P\to \oplus R_n$ is injective, $\Phi(p_k)$ is completely determined by the commutativity of \eqref{eq:oznpps}.
  Since the polynomials $\{p_k\}_{k\geq 1}$ generate $\Lambda$, $\Phi$ is completely determined by its values on $p_k$.
\end{proof}
\begin{lemma}
  The homomorphism $\Phi:\Lambda\to P$ is an isomorphism of rings.
\end{lemma}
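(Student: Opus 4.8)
The plan is to show that $\Phi:\Lambda\to P$ is both injective and surjective, using the grading by algebraic degree on both sides and the fact already established that $\Phi(p_k)=\sum_{d\mid k}dX_d$. I would put the standard grading on $\Lambda$ in which $p_k$ has degree $k$, and on $P$ the grading in which $X_i$ has degree $i$ (this is the grading used throughout Section~\ref{sec:char-polyn} and in Corollary~\ref{corollary:qla-leading}). Then $\Phi$ is a graded algebra homomorphism: $\Phi(p_k)$ has degree $k$ since each $X_d$ with $d\mid k$ has degree $d\le k$, but note that the summand $dX_k$ (corresponding to $d=k$) has degree exactly $k$, so the top-degree part of $\Phi(p_k)$ is $kX_k$. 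The same degree count shows $\Phi$ maps $\Lambda_d$, the degree-$d$ component, into $P_d$.

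Both graded pieces are finite-dimensional of the same dimension. Indeed $\dim \Lambda_d$ equals the number of partitions of $d$, since $\{p_\mu : \mu\vdash d\}$ is a basis of $\Lambda_d$; and $\dim P_d$ also equals the number of partitions of $d$, since the monomials $X_1^{a_1}X_2^{a_2}\cdots$ with $\sum i a_i = d$ (equivalently, the partitions $1^{a_1}2^{a_2}\cdots$ of $d$) form a basis of $P_d$. So it suffices to prove that $\Phi|_{\Lambda_d}:\Lambda_d\to P_d$ is injective for each $d$, and then $\Phi$ will be a graded linear bijection, hence an isomorphism of rings since it is already a ring homomorphism.

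For injectivity of $\Phi|_{\Lambda_d}$, I would examine the images of the power-sum basis. For a partition $\mu=1^{c_1}2^{c_2}\cdots$ of $d$, $\Phi(p_\mu)=\prod_r\bigl(\sum_{d'\mid \mu_r}d'X_{d'}\bigr)$, and expanding the product, the top-degree term (in the grading above) is $\prod_r \mu_r X_{\mu_r}=\bigl(\prod_r\mu_r\bigr)\,X_1^{c_1}X_2^{c_2}\cdots$, a nonzero scalar multiple of the monomial $X^{\mu}$ (abusing notation to write $X^\mu$ for the monomial attached to the partition $\mu$ as above). Thus, ordering partitions of $d$ by dominance or any linear refinement compatible with the appearance of lower-degree correction terms, the matrix expressing $\{\Phi(p_\mu)\}_{\mu\vdash d}$ in the monomial basis $\{X^\nu\}_{\nu\vdash d}$ is triangular with nonzero diagonal entries $\prod_r\mu_r$. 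Hence $\{\Phi(p_\mu)\}_{\mu\vdash d}$ is linearly independent in $P_d$, so $\Phi|_{\Lambda_d}$ is injective, completing the argument.

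The main obstacle is purely bookkeeping: one must check that the "lower-order terms'' in $\Phi(p_\mu)$ — arising from choosing a proper divisor $d'<\mu_r$ in some factor — genuinely have strictly smaller algebraic degree, so that the claimed triangularity holds. This follows because replacing a factor $\mu_r X_{\mu_r}$ of degree $\mu_r$ by a term $d'X_{d'}$ with $d'\mid\mu_r$, $d'<\mu_r$, strictly decreases the total degree, and hence such a term cannot contribute to the degree-$d$ component in the same way; making the ordering of monomials precise enough to conclude triangularity is the only point requiring care, and it is routine.
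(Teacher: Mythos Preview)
Your argument has a gap: $\Phi$ is \emph{not} a graded homomorphism. Indeed $\Phi(p_k)=\sum_{d\mid k}dX_d$ is not homogeneous of degree $k$; for $k>1$ it contains the term $X_1$ of degree $1$. So it is false that $\Phi(\Lambda_d)\subset P_d$, and the dimension count between homogeneous pieces does not apply as stated. Your triangularity claim is also misstated: the ``lower-order terms'' in $\Phi(p_\mu)$ are not monomials $X^\nu$ with $\nu\vdash d$ lying below $X^\mu$ in some partial order on partitions of $d$; they are monomials of strictly smaller total degree, hence not in $P_d$ at all.

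The fix is to replace ``graded'' by ``filtered.'' Let $\Lambda_{\le d}$ and $P_{\le d}$ denote the subspaces of elements of degree at most $d$; these have the same (finite) dimension, and $\Phi(\Lambda_{\le d})\subset P_{\le d}$ does hold. Your top-degree computation is then exactly the computation of the associated graded map $\mathrm{gr}\,\Phi:\Lambda_d\to P_d$, which sends $p_\mu\mapsto\bigl(\prod_r\mu_r\bigr)X^\mu$ and is therefore an isomorphism on each graded piece. It follows that $\Phi$ is an isomorphism. Equivalently, your injectivity argument is correct once interpreted as looking at the degree-$d$ component of a putative relation among $\{\Phi(p_\mu):\mu\vdash d\}$; combined with the filtered dimension count this gives bijectivity on each $\Lambda_{\le d}$.

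The paper takes a different and shorter route: it writes down the inverse explicitly via M\"obius inversion. From $\Phi(p_k)=\sum_{d\mid k}dX_d$ one obtains $kX_k=\sum_{d\mid k}\mu(k/d)\Phi(p_d)$, so the assignment $X_k\mapsto\frac1k\sum_{d\mid k}\mu(k/d)\,p_d$ extends to a ring homomorphism $P\to\Lambda$ inverse to $\Phi$. Your approach proves the isomorphism without exhibiting the inverse; the paper's approach gives the inverse map directly.
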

\begin{proof}
  The inverse of $\Phi$ is obtained using the M\"obius inversion formula:
  \begin{displaymath}
    X_k\mapsto \frac 1d \sum_{d|k}\mu(k/d) p_d,
  \end{displaymath}
  where $\mu$ denotes the number-theoretic M\"obius function (see, e.g., \cite[Section~3.1.1]{rota}).
\end{proof}
\begin{theorem}
  For every partition $\lambda$, we have:
  \begin{align}
    \Phi(s_\lambda) & = S_\lambda,\\
    \label{eq:oz}
    \Phi(\tilde s_\lambda) & = q_\lambda. 
  \end{align}
\end{theorem}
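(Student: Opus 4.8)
The plan is to prove the two identities in turn: $\Phi(s_\lambda)=S_\lambda$ comes from the uniqueness clause of Theorem~\ref{theorem:Phi}, and $\Phi(\tilde s_\lambda)=q_\lambda$ then follows by a change of basis.

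For the first identity, note that since $\{s_\lambda\}$ is a $K$-basis of $\Lambda$ there is a well-defined $K$-linear map $\Psi\colon\Lambda\to P$ with $\Psi(s_\lambda)=S_\lambda$ for every partition $\lambda$. I would check that $\Psi$ makes diagram~\eqref{eq:oznpps} commute for each $n\geq1$: both $\ev_P^n\circ\Psi$ and $\ev_\Lambda^n$ are $K$-linear, so it is enough to compare them on the $s_\lambda$, and there $\ev_P^n(\Psi(s_\lambda))(w)=S_\lambda(X_1(w),X_2(w),\dotsc)=\tr(w;W_\lambda(K^n))$ by Theorem~\ref{theorem:char-poly-weyl}, which is exactly $\ev_\Lambda^n(s_\lambda)(w)$ since $\ev_\Lambda^n(s_\lambda)$ is by construction the character of $\Res^{GL_n(K)}_{S_n}W_\lambda(K^n)$. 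By the uniqueness assertion in Theorem~\ref{theorem:Phi} we conclude $\Psi=\Phi$, i.e.\ $\Phi(s_\lambda)=S_\lambda$. (Equivalently, $\ev_P^n(\Phi(s_\lambda))=\ev_\Lambda^n(s_\lambda)=\ev_P^n(S_\lambda)$ for all $n$, and $\bigoplus_n\ev_P^n$ is injective.)

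For the second identity, I would invoke the property of the Specht symmetric functions established by Assaf--Speyer \cite{assaf-speyer} and Orellana--Zabrocki \cite{2016arXiv160506672O}, namely $s_\lambda=\sum_\mu r_{\lambda\mu}\tilde s_\mu$ in $\Lambda$, where $r_{\lambda\mu}$ are the stable restriction coefficients; this is merely the symmetric-function avatar of $\Res^{GL_n(K)}_{S_n}W_\lambda(K^n)=\bigoplus_\mu V_{\mu[n]}^{\oplus r_{\lambda\mu}}$, so these coefficients are literally the same numbers as in the expansion $S_\lambda=\sum_\mu r_{\lambda\mu}q_\mu$ that defines the stable restriction coefficients in $P$. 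Applying the $K$-linear map $\Phi$ to $s_\lambda=\sum_\mu r_{\lambda\mu}\tilde s_\mu$ and using the first identity gives $S_\lambda=\sum_\mu r_{\lambda\mu}\Phi(\tilde s_\mu)$. Comparing with $S_\lambda=\sum_\mu r_{\lambda\mu}q_\mu$ and using that $(r_{\lambda\mu})$ is invertible---both $\mathbf S$ and $\mathbf q$ are bases of $P$ by Corollary~\ref{corollary:qla-leading}, and $r_{\lambda\mu}=0$ unless $|\mu|\leq|\lambda|$, so on each graded piece one inverts a finite triangular matrix---yields $\Phi(\tilde s_\mu)=q_\mu$ for every $\mu$.

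Once the first identity is in hand the rest is formal; the only genuine input is the expansion $s_\lambda=\sum_\mu r_{\lambda\mu}\tilde s_\mu$, which is what the Specht symmetric functions are designed to satisfy. The point I expect to need the most care is that one cannot prove $\Phi(\tilde s_\lambda)=q_\lambda$ by imitating the evaluation argument used for $S_\lambda$: the polynomial $q_\lambda$ reproduces the Specht character $\chi_{\lambda[n]}$ only in the stable range $n\geq|\lambda|+\lambda_1$, whereas $S_\lambda$ computes $\tr(w;W_\lambda(K^n))$ for all $n\geq1$. The change-of-basis argument, being uniform in $\lambda$, is precisely what circumvents this small-$n$ discrepancy.
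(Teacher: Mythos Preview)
Your proof of the first identity is essentially the paper's: both argue via the commutativity of diagram~\eqref{eq:oznpps} and the injectivity of $\bigoplus_n \ev_P^n$, using that $\ev_\Lambda^n(s_\lambda)$ is the character of the restricted Weyl module and hence equals $\ev_P^n(S_\lambda)$ by Theorem~\ref{theorem:char-poly-weyl}.

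For the second identity the paper again invokes Theorem~\ref{theorem:Phi} directly, noting in the subsequent remark that this is \cite[Prop.~12]{2016arXiv160506672O}. That direct route requires $\ev_\Lambda^n(\tilde s_\lambda)=\ev_P^n(q_\lambda)$ for \emph{all} $n$, which is indeed how Orellana--Zabrocki set things up; your concern about the stable range would only bite if one took ``evaluates to $\chi_{\lambda[n]}$'' as the defining property of $\tilde s_\lambda$. Your change-of-basis argument is a genuinely different and correct alternative: it uses only the expansion $s_\lambda=\sum_\mu r_{\lambda\mu}\tilde s_\mu$ together with invertibility of $(r_{\lambda\mu})$, and is insensitive to the small-$n$ behaviour of either $\tilde s_\lambda$ or $q_\lambda$. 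The paper's route is shorter because it grants the Orellana--Zabrocki result outright; yours is more self-contained once the first identity is in hand, at the cost of importing the fact that the $\tilde s_\mu$-expansion coefficients of $s_\lambda$ are the same stable restriction coefficients that appear in $S_\lambda=\sum_\mu r_{\lambda\mu}q_\mu$.
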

\begin{proof}
  This follows immediately from Theorem~\ref{theorem:Phi}.
\end{proof}
\begin{remark}
  The second identity \eqref{eq:oz} is \cite[Prop.~12]{2016arXiv160506672O}.
\end{remark}
\section{Moment Generating Functions}
\label{sec:moment-gener-funct}

\subsection{The Main Generating Function}
\label{sec:master-generating-function}
In this and the following subsections, we shall frequently use the following elementary identities:
\begin{gather}
  \tag A
  \label{eq:exp}
  \exp(t/i) = \sum_{b\geq 0} \frac 1{i^bb!} t^b,\\
  \tag B
  \label{eq:log}
  \log \frac 1{1-t} = \sum_{i=1}^\infty t^i/i.
\end{gather}
We shall use $\alpha=1^{a_1}2^{a_2}\dotsb$, $\beta = 1^{b_1}2^{b_2}\dotsb$, $\gamma = 1^{c_1}2^{c_2}\dotsb$.
Let $\Par$ denote the set of all integer partitions.
We use the notation $\lambda=(\lambda_1,\dotsc,\lambda_l)$, $\mu=(\mu_1,\dotsc,\mu_m)$.
Also, $t^\lambda = t_1^{\lambda_1}\dotsb t_l^{\lambda_l}$ and $u^\mu = u_1^{\mu_1}\dotsb u_m^{\mu_m}$.
We shall interpret $\lambda\geq 0$ as $\lambda_i\geq 0$ for $i=1,\dotsc,l$ and $\mu\geq 0$ as $\mu_i\geq 0$ for $i=1,\dotsc,m$.

We use the notation $R\sqsubset [l]$ to signify that $R$ is a multiset with elements drawn from $[l]$.
We write $t^R$ for the monomial where $t_i$ is raised to the multiplicity of $i$ in $R$.
Similarly, for any $S\subset [m]$, we write $u^S=\prod_{i\in S}u_i$.

\begin{theorem}
  \label{theorem:hlaemu}
  We have:
  \begin{displaymath}
    \sum_{n\geq 0,\lambda\geq 0,\mu\geq 0} \mom{H_\lambda E_\mu}_n t^\lambda u^\mu v^n = \prod_{R\sqsubset [l]}\frac{\prod_{S\subset [m],\;|S|\text{ odd}}(1+u^St^Rv)}{\prod_{S\subset [m],\;|S|\text{ even}}(1-u^St^Rv)}.
  \end{displaymath}
\end{theorem}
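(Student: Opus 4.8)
The plan is to compute the triple generating function by reducing everything to the moment formula \eqref{eq:binomial-moments} of Theorem~\ref{theorem:moment}. First I would use the explicit expansions \eqref{eq:sym-char} and \eqref{eq:alt-char}, or better their generating-function forms: from Theorem~\ref{theorem:he},
\begin{displaymath}
  \sum_{d\geq 0} H_d t_r^d = \prod_{i\geq 1}(1-t_r^i)^{-X_i}, \qquad \sum_{d\geq 0} E_d u_s^d = \prod_{i\geq 1}(1-(-u_s)^i)^{X_i},
\end{displaymath}
so that
\begin{displaymath}
  \sum_{\lambda\geq 0,\mu\geq 0} H_\lambda E_\mu\, t^\lambda u^\mu = \prod_{r=1}^l\prod_{i\geq 1}(1-t_r^i)^{-X_i}\cdot \prod_{s=1}^m\prod_{i\geq 1}(1-(-u_s)^i)^{X_i}.
\end{displaymath}
Taking the evaluation at $w\in S_n$ and then the moment $\mom{\cdot}_n$ and summing against $v^n$, the key point is that $\mom{\cdot}_n$ is linear, so I can pull the sum over $n$ inside. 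The cleanest route is to expand each of the factors $(1-t_r^i)^{-X_i}$ via \eqref{eq:pow1} and $(1-(-u_s)^i)^{X_i}$ via \eqref{eq:pow2}, multiply out, and observe that the product of the resulting binomial symbols $\prod_i \binom{X_i}{\text{something}}$ (after using Vandermonde to merge the several binomials in $X_i$ coming from different $r$ and $s$) is a sum of binomial-basis elements $\binom X\gamma$. Then apply \eqref{eq:binomial-moments}.

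A slicker alternative, which I expect to be the one actually used, is to exploit the \emph{multiplicativity} of the moment generating function over cycle sizes. Because $\mom{\binom X\gamma}_n v^n$ summed over $n$ factors as $\frac{1}{1-v}\prod_i \frac{v^{i c_i}}{i^{c_i}c_i!}$, and because for each fixed $i$ the variable $X_i$ appears in the big product only through the combination $(1-t_r^i)^{-X_i}$ and $(1-(-u_s)^i)^{X_i}$, one can organize the computation cycle-size by cycle-size. Concretely, write $\sum_n \mom{\cdot}_n v^n = \frac{1}{1-v}\prod_{i\geq 1} \Theta_i$, where $\Theta_i$ is obtained by treating $X_i$ as a nonnegative integer $b_i$, weighting by $\frac{v^{i b_i}}{i^{b_i} b_i!}$, and summing over $b_i\geq 0$; this is exactly the exponential-type sum $\exp$-identity \eqref{eq:exp} will collapse. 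Each $\Theta_i$ then becomes $\exp\!\big(\tfrac{v^i}{i}\,L_i\big)$ where $L_i$ is the coefficient of $X_i$ in $\log$ of the relevant product, namely $L_i = -\sum_{r}\log(1-t_r^i) + \sum_s \log(1-(-u_s)^i)$ evaluated through \eqref{eq:log}. Reassembling $\prod_i \Theta_i = \exp\big(\sum_i \tfrac{v^i}{i} L_i\big)$ and using \eqref{eq:log} in reverse turns the exponent into a sum of $\log(1\pm u^S t^R v)$ terms indexed by multisets $R\sqsubset[l]$ (from expanding $-\log(1-t_r^i)=\sum t_r^i{}\cdots$, products over $r$ giving multisets) and subsets $S\subset[m]$ (from $\log(1-(-u_s)^i)$, where the sign $(-1)^{|S|}$ selects the numerator when $|S|$ is odd and the denominator when $|S|$ is even). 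Exponentiating recovers exactly the claimed product.

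The main obstacle is the bookkeeping in the last step: correctly tracking how the product over cycle sizes $i$ recombines into a product over multisets $R$ and subsets $S$, and in particular verifying that the $t_r$ variables contribute multisets (repetitions allowed, since $(1-t_r^i)^{-1}=\sum_{k\geq 0}t_r^{ik}$ has all multiplicities) while the $u_s$ variables contribute genuine subsets with a sign (since $(1-(-u_s)^i)$ is, up to the sign $-(-1)^i$, linear in the "use it or not" choice for each $s$), and that the resulting sign $(-1)^{|S|}$ lands the factor in the numerator or denominator as stated. Once the exponent is identified as $\sum_{R\sqsubset[l]}\big(\sum_{|S|\text{ odd}}\log(1+u^St^Rv) - \sum_{|S|\text{ even}}\log(1-u^St^Rv)\big)$ — and one checks convergence/formal well-definedness in $P[[t_1,\dots,t_l,u_1,\dots,u_m,v]]$ — exponentiation finishes the proof, and the $\frac{1}{1-v}$ prefactor is absorbed because it is precisely the $R=\emptyset$, $S=\emptyset$ denominator factor $(1-v)^{-1}$.
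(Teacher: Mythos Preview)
Your approach matches the paper's: use Theorem~\ref{theorem:he} to write the generating function for $\sum H_\lambda E_\mu t^\lambda u^\mu$, replace each $X_i$ by $b_i$ weighted by $v^{ib_i}/(i^{b_i}b_i!)$ and sum over $b_i\geq 0$ (exactly as in the proof of Theorem~\ref{theorem:moment}), collapse each factor with \eqref{eq:exp}, expand the argument over multisets $R\sqsubset[l]$ and subsets $S\subset[m]$, and apply \eqref{eq:log} in reverse. One slip to fix: after applying \eqref{eq:exp} the exponent is $\tfrac{v^i}{i}\prod_r(1-t_r^i)^{-1}\prod_s(1-(-u_s)^i)$ itself (that is, $e^{L_i}$ in your notation), not its logarithm $L_i$; it is this quantity that expands as $\sum_{R,S}(-1)^{|S|}(t^R(-1)^{|S|}u^S)^i$ and then resums via \eqref{eq:log}. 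With that correction no separate $\tfrac{1}{1-v}$ prefactor is needed, since it already appears as the $R=\emptyset$, $S=\emptyset$ denominator factor in the final product.
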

\begin{proof}
  Using \eqref{eq:H} and \eqref{eq:E}, we have:
  \begin{displaymath}
    \sum_{\lambda\geq 0,\mu\geq 0} H_\lambda E_\mu t^\lambda u^\mu = \prod_{r=1}^l\prod_{s=1}^m \prod_{i\geq 1} \left(\frac{1-(-u_s)^i}{1-t_r^i}\right)^{X_i}.
  \end{displaymath}
  Now proceeding as in the proof of Theorem~\ref{theorem:moment},
  \begin{align*}
    \sum_{n\geq 0,\lambda\geq 0,\mu\geq 0} \mom{H_\lambda E_\mu}_n t^\lambda u^\mu v^n & = \prod_{i\geq 1}\sum_{b_i\geq 0}\frac{v^{ib_i}}{i^{b_i}b_i!}\prod_{r=1}^l\prod_{s=1}^m \left(\frac{1-(-u_s)^i}{1-t^i_r}\right)^{b_i}\\
                                                                                       & \overset{\eqref{eq:exp}}{=} \prod_{i\geq 1}\exp\left(\frac{v^i}{i}\prod_{r=1}^l\prod_{s=1}^m\left[\frac{1-(-u_s)^i}{1-t^i_r}\right]\right) \\
                                                                                       & = \exp\left(\sum_{i\geq 1}\sum_{R\sqsubset[l]}\sum_{S\subset[m]}(-1)^{|S|} \frac{(t^R(-1)^{|S|}u^Sv)^i}i\right)\\
                                                                                       & \overset{~\eqref{eq:log}}{=} \prod_{R\sqsubset[l]}\prod_{S\subset[m]} \left(1-(-1)^{|S|}t^Ru^Sv\right)^{(-1)^{|S|+1}},
  \end{align*}
  which is equivalent to the desired expression.
\end{proof}
\begin{corollary}
  \label{corollary:hlamu}
  We have:
  \begin{displaymath}
    \sum_{\lambda\geq 0,\mu\geq 0} \mom{H_\lambda E_\mu} t^\lambda u^\mu = \prod_{R,S}(1-(-1)^{|S|}u^St^R)^{(-1)^{|S|+1}},
  \end{displaymath}
  where the product is over $R\sqsubset[l]$, $S\subset[m]$, with at least one of $R$ and $S$ non-empty.
\end{corollary}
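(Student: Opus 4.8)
The plan is to deduce Corollary~\ref{corollary:hlamu} from Theorem~\ref{theorem:hlaemu} by extracting the ``stable part'' of the generating function, that is, by multiplying through by $(1-v)$ and specializing $v=1$.

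First I would pin down the sense in which the stable moment is read off from the $v$-generating function. For the grading with $\deg X_i=i$, each $H_d$ is homogeneous of degree $d$ and each $E_d$ is homogeneous of degree $d$ by Theorem~\ref{theorem:he}, so $H_\lambda E_\mu$ is homogeneous of degree $|\lambda|+|\mu|$. Hence Corollary~\ref{corollary:stability} gives $\mom{H_\lambda E_\mu}_n=\mom{H_\lambda E_\mu}$ for all $n\geq|\lambda|+|\mu|$, so
\[
  \sum_{n\geq 0}\mom{H_\lambda E_\mu}_n v^n = f_{\lambda\mu}(v) + \mom{H_\lambda E_\mu}\frac{v^{|\lambda|+|\mu|}}{1-v}
\]
for some polynomial $f_{\lambda\mu}(v)$. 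Thus $(1-v)\sum_{n}\mom{H_\lambda E_\mu}_n v^n$ is a polynomial in $v$ whose value at $v=1$ is precisely $\mom{H_\lambda E_\mu}$. Viewing both sides of Theorem~\ref{theorem:hlaemu} as power series in the $t_i,u_j$ with coefficients in $K[[v]]$, it follows that the $t^\lambda u^\mu$-coefficient of $(1-v)$ times the left-hand side is a polynomial in $v$, and evaluating at $v=1$ recovers exactly the left-hand side of the corollary.

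Next I would treat the right-hand side of Theorem~\ref{theorem:hlaemu}. Writing $1+u^St^Rv = 1-(-1)^{|S|}u^St^Rv$ when $|S|$ is odd and $1-u^St^Rv = 1-(-1)^{|S|}u^St^Rv$ when $|S|$ is even, the right side becomes $\prod_{R\sqsubset[l]}\prod_{S\subset[m]}\bigl(1-(-1)^{|S|}u^St^Rv\bigr)^{(-1)^{|S|+1}}$. The only factor equal to $1-v$ is the one with $R=\emptyset$ and $S=\emptyset$ (this forces $|S|$ even and $u^St^R=1$), and it occurs with exponent $(-1)^{0+1}=-1$; every other factor is $(1-(-1)^{|S|}u^St^Rv)^{\pm1}$ with $u^St^R$ a monomial of positive $(t,u)$-degree, hence regular at $v=1$. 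Multiplying by $(1-v)$ cancels precisely this single factor, and setting $v=1$ in what remains is legitimate because, for each fixed total degree in the $t_i,u_j$, only finitely many pairs $(R,S)$ contribute and each contributes a coefficient that is polynomial in $v$; the result is $\prod_{(R,S)\neq(\emptyset,\emptyset)}(1-(-1)^{|S|}u^St^R)^{(-1)^{|S|+1}}$, which is the right-hand side of the corollary.

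The one point needing genuine care — the main obstacle, though not a deep one — is the bookkeeping that makes ``set $v=1$'' a legal operation on both sides of a formal identity: one must verify that, degree by degree in the $t_i,u_j$, the infinite product converges $(t,u)$-adically so that factor-by-factor specialization is valid, and that after clearing the unique pole at $v=1$ all the relevant coefficients are regular there. Alternatively, one can avoid this entirely by rerunning the proof of Theorem~\ref{theorem:hlaemu} verbatim with the stable moments, using $\mom{\binom X\alpha}=1/z_\alpha$ from Theorem~\ref{theorem:moment} in place of $\mom{\binom X\alpha}_n$; the $\exp$--$\log$ manipulation is identical and the factor $1/(1-v)$ simply never appears.
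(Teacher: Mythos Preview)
Your proposal is correct and is precisely the argument the paper leaves implicit: the corollary is stated immediately after Theorem~\ref{theorem:hlaemu} with no separate proof, and the intended derivation is exactly to isolate the factor $(1-v)^{-1}$ coming from $(R,S)=(\emptyset,\emptyset)$, cancel it, and pass to the stable value via Corollary~\ref{corollary:stability}. Your alternative of rerunning the $\exp$--$\log$ computation with $\mom{\binom X\alpha}=1/z_\alpha$ in place of the $n$-dependent moment is equally in the spirit of the paper and arguably the cleanest way to sidestep the formal bookkeeping you flag.
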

\begin{corollary}
  \label{corollary:weyl-mom}
  For every partition $\lambda$, $\mom{W_\lambda}_n$ is the coefficient of $t^\lambda v^n$ in
  \begin{displaymath}
    \prod_{i<j}(1-t_j/t_i) \prod_{R\sqsubset[l]}(1-t^Rv)^{-1}.
  \end{displaymath}
\end{corollary}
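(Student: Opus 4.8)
The plan is to feed the generating-function form of the Jacobi--Trudi identity for $S_\lambda$ (Theorem~\ref{theorem:weyl-char-poly-gen}) into the master moment identity (Theorem~\ref{theorem:hlaemu}), taken in the degenerate case where there are no $u$-variables. Throughout, $\mom{W_\lambda}_n$ is by definition $\mom{S_\lambda}_n$, where $S_\lambda$ is the character polynomial of $W_\lambda$ supplied by Theorem~\ref{theorem:char-poly-weyl}; so it is enough to produce the claimed closed form for the triple generating function $\sum_{n,\lambda}\mom{S_\lambda}_n t^\lambda v^n$ and then read off the coefficient of $t^\lambda v^n$.

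First I would recall the identity established inside the proof of Theorem~\ref{theorem:weyl-char-poly-gen}, an equality of Laurent polynomials over $P$ in $t_1,\dots,t_l$:
\[
  \sum_{\lambda\geq 0} S_\lambda\, t^\lambda \;=\; \prod_{i<j}\left(1-\frac{t_j}{t_i}\right)\;\prod_{r=1}^l\prod_{i\geq 1}(1-t_r^i)^{-X_i}.
\]
Next I would apply the $K$-linear operator $q\mapsto\sum_{n\geq 0}\mom{q}_n v^n$ to both sides, working coefficientwise in the monomials $t^\lambda$. The factor $\prod_{i<j}(1-t_j/t_i)$ is a Laurent polynomial with coefficients in $K$ (it does not involve the variables $X_i$), so it commutes with this operator and may be pulled out in front; the remaining factor expands, by the defining identity~\eqref{eq:H} for the $H_d$, as $\sum_{\lambda\geq 0} H_{\lambda_1}\cdots H_{\lambda_l}\, t^\lambda$. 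Hence
\[
  \sum_{n\geq 0}\sum_{\lambda\geq 0}\mom{S_\lambda}_n\, t^\lambda v^n \;=\; \prod_{i<j}\left(1-\frac{t_j}{t_i}\right)\;\sum_{n\geq 0}\sum_{\lambda\geq 0}\mom{H_{\lambda_1}\cdots H_{\lambda_l}}_n t^\lambda v^n.
\]

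Now I would observe that the second factor on the right is exactly the left-hand side of Theorem~\ref{theorem:hlaemu} in the case $m=0$: there are no $u$-variables, $E_\mu$ is the empty product $1$, and on the right-hand side of that theorem the only subset $S\subseteq[m]=\emptyset$ is $S=\emptyset$, which has even cardinality, so the numerator product is empty and the formula collapses to $\prod_{R\sqsubset[l]}(1-t^Rv)^{-1}$. Substituting this in and extracting the coefficient of $t^\lambda v^n$ for a genuine partition $\lambda$ (where $[t^\lambda]$ of the left-hand side is $\mom{S_\lambda}_n=\mom{W_\lambda}_n$) gives precisely the corollary.

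The step I expect to be the only real point to watch is bookkeeping rather than mathematics: one must check that applying $\sum_n\mom{\cdot}_n v^n$ termwise to the $t$-expansion is legitimate and commutes with multiplication by the scalar Laurent polynomial $\prod_{i<j}(1-t_j/t_i)$, and that the infinite product over multisets $R\sqsubset[l]$ converges $v$-adically --- each monomial $t^\lambda v^n$ receiving a contribution only from those $R$ with $|R|\leq|\lambda|$. Both facts are already implicit in the proofs of Theorems~\ref{theorem:weyl-char-poly-gen} and~\ref{theorem:hlaemu}. If one prefers not to invoke Theorem~\ref{theorem:hlaemu} at $m=0$, the very same computation as in its proof, run with the $u_s$ suppressed throughout, delivers $\sum_{n,\lambda}\mom{H_{\lambda_1}\cdots H_{\lambda_l}}_n t^\lambda v^n=\prod_{R\sqsubset[l]}(1-t^Rv)^{-1}$ directly.
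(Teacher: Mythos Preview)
Your proposal is correct and follows essentially the same route as the paper: specialize Theorem~\ref{theorem:hlaemu} to $m=0$ to obtain $\sum_{\lambda\geq 0,\,n\geq 0}\mom{H_\lambda}_n t^\lambda v^n=\prod_{R\sqsubset[l]}(1-t^Rv)^{-1}$, then combine with the generating-function form of the Jacobi--Trudi identity (Theorem~\ref{theorem:weyl-char-poly-gen}) by taking moments. The paper's proof is terser, but your explicit observation that the scalar Laurent factor $\prod_{i<j}(1-t_j/t_i)$ commutes with the moment operator is exactly the mechanism behind the paper's phrase ``by taking moments.''
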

\begin{proof}
  From Theorem~\ref{theorem:hlaemu} we get:
  \begin{equation}
    \label{eq:hmomgen}
    \sum_{\lambda\geq 0,n\geq 0} \mom{H_\lambda}_n t^\lambda v^n = \prod_{R\sqsubset [l]} (1-t^Rv)^{-1}.
  \end{equation}
  Using this, the corollary can be deduced from Theorem~\ref{theorem:weyl-char-poly-gen} by taking moments. 
\end{proof}
\subsection{$S_n$-invariant Vectors}
\label{sec:s_n-invar-vect}
For a representation $V_n$ of $S_n$, let $V_n^{S_n}$ denote the subspace of $S_n$-invariant vectors.
If a family $\{V_n\}$ of representations has polynomial character $q\in P$, then
\begin{displaymath}
  \mom q_n = \dim(V_n^{S_n}) \text{ for all $n\geq 0$}.
\end{displaymath}
Therefore, for any partition $\lambda$, $\dim W_\lambda(K^n)^{S_n}=\mom{S_\lambda}_n$.
In particular, $W_\lambda(K^n)$ has a non-zero $S_n$-invariant vector if and only if $\mom{S_\lambda}_n\neq 0$.
\begin{theorem}
  \label{theorem:monotonicity}
  For every positive integer $n$ and every partition $\lambda$ with at most $n$ parts,
  \begin{displaymath}
    \dim W_\lambda(K^n)^{S_n} \leq \dim W_\lambda(K^{n+1})^{S_{n+1}}.
  \end{displaymath}
\end{theorem}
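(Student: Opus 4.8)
The plan is to exhibit an injection of $S_{n+1}$-representations, or rather to produce an injective linear map $W_\lambda(K^n)^{S_n}\hookrightarrow W_\lambda(K^{n+1})^{S_{n+1}}$, by exploiting the standard embedding $K^n\hookrightarrow K^{n+1}$ sending the first $n$ standard basis vectors to the first $n$ standard basis vectors of $K^{n+1}$. This embedding is equivariant for the inclusion $S_n\hookrightarrow S_{n+1}$ (as permutation matrices fixing the last coordinate), and since $W_\lambda$ is a functor on vector spaces, it induces an $S_n$-equivariant injection $\iota\colon W_\lambda(K^n)\hookrightarrow W_\lambda(K^{n+1})$. The issue is that $\iota$ is only $S_n$-equivariant, not $S_{n+1}$-equivariant, so the image of an $S_n$-invariant vector need not be $S_{n+1}$-invariant. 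First I would fix this by composing $\iota$ with the averaging (Reynolds) operator $\pi\colon W_\lambda(K^{n+1})\to W_\lambda(K^{n+1})^{S_{n+1}}$, $\pi(x)=\frac 1{(n+1)!}\sum_{w\in S_{n+1}} w\cdot x$, which is a projection onto the invariants; the composite $\pi\circ\iota$ maps $W_\lambda(K^n)^{S_n}$ into $W_\lambda(K^{n+1})^{S_{n+1}}$.

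The hard part is showing that $\pi\circ\iota$ restricted to $W_\lambda(K^n)^{S_n}$ is \emph{injective}. Here I would use the character-polynomial machinery developed above. By the discussion preceding the theorem, $\dim W_\lambda(K^n)^{S_n}=\mom{S_\lambda}_n$, and by Corollary~\ref{corollary:weyl-mom} this is the coefficient of $t^\lambda v^n$ in
\begin{displaymath}
  \prod_{i<j}(1-t_j/t_i)\prod_{R\sqsubset[l]}(1-t^Rv)^{-1},
\end{displaymath}
where $l$ is the number of parts of $\lambda$. The key observation is that for fixed $t^\lambda$, the coefficients of $v^n$ in this series are weakly increasing in $n$: the factor $\prod_{i<j}(1-t_j/t_i)$ does not involve $v$, and the factor $\prod_{R\sqsubset[l]}(1-t^Rv)^{-1}=\sum_{n\geq 0} h_n(\{t^R\}_{R\sqsubset[l]})v^n$ has coefficients that are complete homogeneous symmetric functions $h_n$ in the (finitely many, for bounded degree) monomials $t^R$; and for the extraction of the $t^\lambda$-coefficient only the $R$ with $t^R$ dividing $t^\lambda$ matter, a finite set. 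So it reduces to a monotonicity statement about coefficients of $v^n$ in $\prod_i (1-a_iv)^{-1}\cdot(\text{polynomial in the }t\text{'s})$, which one checks by comparing the coefficient of $v^{n+1}$ with that of $v^n$: multiplying the series by $(1-a_1 v)$ (for any one of the variables $a_i=t^{R}$) shows the difference of consecutive coefficients is itself a coefficient in a product of the same form with one fewer factor, hence — by induction on the number of factors, the base case being a single geometric series — the difference is a nonnegative-coefficient combination that extracts to a nonnegative integer. I would need to be slightly careful that the sign-bearing factor $\prod_{i<j}(1-t_j/t_i)$ does not spoil positivity; but since that factor is independent of $v$, the monotonicity in $n$ holds coefficient-by-coefficient in the $t$-variables before that factor is applied, and applying a fixed ($v$-free) linear functional preserves the inequality $\mom{S_\lambda}_{n+1}\geq\mom{S_\lambda}_n$.

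An alternative, perhaps cleaner, route avoids the generating-function positivity argument entirely: show directly that $\pi\circ\iota$ is injective on invariants. For this I would argue that $\iota$ admits an $S_n$-equivariant retraction $\rho\colon W_\lambda(K^{n+1})\to W_\lambda(K^n)$ (induced by the coordinate projection $K^{n+1}\to K^n$ killing the last coordinate, which is a splitting of the inclusion as $S_n$-modules), so that $\rho\circ\iota=\mathrm{id}$ on $W_\lambda(K^n)$; then for $x\in W_\lambda(K^n)^{S_n}$ one computes $\rho(\pi(\iota(x)))=\frac 1{(n+1)!}\sum_{w\in S_{n+1}}\rho(w\cdot\iota(x))$ and isolates the terms where $w\in S_n$, which contribute $\frac{n!}{(n+1)!}\,\rho(\iota(x))=\frac 1{n+1}x$. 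The remaining terms, indexed by cosets $wS_n$ with $w\notin S_n$, need to be controlled — this is where the main obstacle lies, since $\rho$ of those terms need not vanish. I expect the resolution is that after averaging over $S_n$ on the left (which is legitimate since $x$ is already $S_n$-fixed, or by further composing with the $S_n$-Reynolds operator), those cross terms assemble into something proportional to $x$ with a nonnegative coefficient, again forcing injectivity. Between the two approaches I would present the generating-function monotonicity argument, as it is the one that most directly uses the results already established and sidesteps the delicate bookkeeping of coset representatives.
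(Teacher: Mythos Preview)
Your generating-function approach has a genuine gap at the last step. You correctly note that the coefficient of $v^n$ in $\prod_{R\sqsubset[l]}(1-t^Rv)^{-1}$ is monotone in $n$, coefficientwise in the $t$-variables (the factor $(1-v)^{-1}$ coming from $R=\emptyset$ guarantees this). But the passage from this to $\mom{S_\lambda}_{n+1}\geq\mom{S_\lambda}_n$ via ``applying a fixed ($v$-free) linear functional'' is not valid: that functional is
\[
  f\ \longmapsto\ [t^\lambda]\Bigl(\prod_{i<j}(1-t_j/t_i)\Bigr)\cdot f,
\]
and since $\prod_{i<j}(1-t_j/t_i)$ has negative coefficients, this functional is not positive; a non-positive functional need not preserve coefficientwise inequalities. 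Concretely, already for $l=2$ the functional is $f\mapsto [t_1^{\lambda_1}t_2^{\lambda_2}]f-[t_1^{\lambda_1+1}t_2^{\lambda_2-1}]f$. Rephrased via Theorem~\ref{theorem:weyl-mom}, what you would need is the nonnegativity of the alternating sum $\sum_{w\in S_l}\sgn(w)\,p_{n+1}(\lambda_1-1+w(1),\dotsc,\lambda_l-l+w(l))$, and the Remark following that theorem points out that even the nonnegativity of the corresponding $p_{\leq n}$ sum has no known direct combinatorial proof in general. Your alternative injectivity argument is, as you yourself say, incomplete precisely at the point where the cross terms from nontrivial cosets must be controlled; there is no reason those terms should be nonnegative multiples of $x$.

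The paper's proof is quite different from either of your approaches: it invokes Littlewood's plethystic formula $\dim W_\lambda(K^n)^{S_n}=(s_\lambda,h_n[H])$, uses the identity $h_{n+1}[H]-h_n[H]=h_{n+1}[H-1]$, and concludes by the Schur-positivity of plethysm of Schur-positive symmetric functions, so that $(s_\lambda,h_{n+1}[H-1])\geq 0$.
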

\begin{proof}
  We use Littlewood's plethystic formula \cite[Theorem XI]{MR95209} (see also, \cite[Theorem~2.6]{poly-ind}) for restriction coefficients.
  This formula asserts that, for every partition $\lambda$ with at most $n$ parts, and every partition $\mu$ of $n$, the multiplicity of the Specht module $V_\mu$ in $\Res^{GL_n(K)}_{S_n} W_\lambda(K^n)$ is given by $(s_\lambda, s_\mu[H])$.
  Here $(-,-)$ denotes the Hall inner product on symmetric functions, and $s_\mu[H]$ denotes the plethystic substitution of $H$ into $s_\mu$ (for definitions, see \cite[Section~1]{MR2576382}).
  Taking $\mu = (n)$ in Littlewood's formula gives
  \begin{displaymath}
    \dim W_\lambda(K^n)^{S_n} = (s_\lambda, h_n[H]).
  \end{displaymath}
  Recall \cite[Eq.~1.8]{MR2576382} that
  \begin{displaymath}
    h_{n+1}[H] - h_n[H] = h_{n+1}[H-1],
  \end{displaymath}
  so that
  \begin{displaymath}
    \dim W_\lambda(K^{n+1})^{S_{n+1}} - \dim W_\lambda(K^n)^{S_n} = (s_\lambda,h_{n+1}[H-1]) \geq 0.
  \end{displaymath}
  The inequality above holds because the plethystic substitution of a Schur-positive symmetric function into another is Schur-positive.
\end{proof}
\begin{definition}
  [Vector Partitions]
  Let $\vv\in \ZZ_{\geq 0}^l$.
  A \emph{vector partition} of $\vv$ is an unordered collection $\vv_1,\dotsc,\vv_n$ of non-zero vectors in $\ZZ^l_{\geq 0}$ such that
  \begin{displaymath}
    \vv = \vv_1+\dotsb + \vv_n.
  \end{displaymath}
  Let $p_n(\vv)$ (resp. $p_{\leq n}(\vv)$) denote the number of vector partitions of $\vv$ with exactly (resp. at most) $n$ parts.
\end{definition}
\begin{theorem}
  \label{theorem:weyl-mom}
  For every partition $\lambda=(\lambda_1,\dotsc,\lambda_l)$,
  \begin{displaymath}
    \dim W_\lambda(K^n)^{S_n} = \sum_{w\in S_l} \sgn(w)p_{\leq n}(\lambda_1-1+w(1),\dotsc,\lambda_l-l+w(l)).
  \end{displaymath}
\end{theorem}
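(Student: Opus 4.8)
The plan is to read $\dim W_\lambda(K^n)^{S_n}=\mom{S_\lambda}_n=\mom{W_\lambda}_n$ off Corollary~\ref{corollary:weyl-mom}, and then to recognize the infinite product appearing there as a generating function for vector partitions. By that corollary, $\dim W_\lambda(K^n)^{S_n}$ is the coefficient of $t^\lambda v^n$ in
\[
  \prod_{i<j}(1-t_j/t_i)\prod_{R\sqsubset[l]}(1-t^Rv)^{-1},
\]
where $t^\lambda=t_1^{\lambda_1}\dotsb t_l^{\lambda_l}$ as usual.

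The main point is the identity
\[
  \prod_{R\sqsubset[l]}\frac1{1-t^Rv}=\sum_{n\geq 0}\ \sum_{\vv\in\ZZ_{\geq 0}^l}p_{\leq n}(\vv)\,t^\vv v^n,
\]
with $t^\vv=t_1^{v_1}\dotsb t_l^{v_l}$. To prove it, I would identify a multiset $R\sqsubset[l]$ with its multiplicity vector $\vv\in\ZZ_{\geq 0}^l$, under which $t^R=t^\vv$; the empty multiset then contributes the single factor $(1-v)^{-1}$. Expanding $\prod_{\vv\neq 0}(1-t^\vv v)^{-1}$ as a product of geometric series, a monomial in the expansion is a choice of multiplicity $m_\vv\geq 0$ for each nonzero $\vv$, i.e.\ a multiset of nonzero vectors, i.e.\ a vector partition, contributing $t^{\sum m_\vv\vv}v^{\sum m_\vv}$; hence $\prod_{\vv\neq 0}(1-t^\vv v)^{-1}=\sum_{k,\vv}p_k(\vv)t^\vv v^k$. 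Multiplying by $(1-v)^{-1}=\sum_{j\geq 0}v^j$ replaces each $p_k$ by its partial sums, so the coefficient of $v^n$ becomes $\sum_{k\leq n}p_k(\vv)=p_{\leq n}(\vv)$. (Combined with \eqref{eq:hmomgen}, this identity is the statement $\mom{H_\mu}_n=p_{\leq n}(\mu)$ for every $\mu\in\ZZ_{\geq 0}^l$.)

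Finally I would expand the Weyl-denominator factor by the Vandermonde identity $\prod_{i<j}(1-t_j/t_i)=\sum_{w\in S_l}\sgn(w)\prod_{r=1}^l t_r^{r-w(r)}$, exactly as in the proof of Theorem~\ref{theorem:weyl-char-poly-gen}. Substituting this and the identity above into the coefficient extraction, the factor $\prod_r t_r^{r-w(r)}$ simply shifts the exponent vector, so the contribution of $w$ to $\dim W_\lambda(K^n)^{S_n}$ is $\sgn(w)$ times the coefficient of $v^n$ in $\sum_m p_{\leq m}(\lambda_1-1+w(1),\dotsc,\lambda_l-l+w(l))v^m$, namely $\sgn(w)\,p_{\leq n}(\lambda_1-1+w(1),\dotsc,\lambda_l-l+w(l))$ --- with the convention that $p_{\leq n}$ vanishes on vectors having a negative coordinate, which is automatic since the series above has no such monomials. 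Summing over $w\in S_l$ gives the claim.

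The one genuinely new ingredient is the generating-function identity of the second paragraph; the only delicate points are the bookkeeping role of the empty-multiset factor $(1-v)^{-1}$, which is precisely what converts ``exactly $k$ parts'' into ``at most $n$ parts'', and the support convention for vectors with negative entries. An alternative that avoids Corollary~\ref{corollary:weyl-mom} is to use $\mom{H_\mu}_n=p_{\leq n}(\mu)$ directly together with the Jacobi--Trudi expansion $S_\lambda=\det(H_{\lambda_i-i+j})$ from \eqref{eq:jt} and linearity of $\mom{\cdot}_n$; the computation is the same.
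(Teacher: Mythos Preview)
Your proposal is correct and is essentially the paper's own argument, just organized slightly differently: the paper first reads off $\mom{H_\mu}_n=p_{\leq n}(\mu)$ from \eqref{eq:hmomgen} and then applies the Jacobi--Trudi expansion \eqref{eq:jt}, which is exactly the ``alternative'' you describe at the end, while your main write-up routes the same two ingredients through Corollary~\ref{corollary:weyl-mom} (itself proved from \eqref{eq:hmomgen} and the generating-function Jacobi--Trudi of Theorem~\ref{theorem:weyl-char-poly-gen}). Your explicit justification of why the empty-multiset factor $(1-v)^{-1}$ converts $p_k$ to $p_{\leq n}$ is a useful detail the paper leaves implicit.
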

\begin{proof}
  The coefficient of $t^\lambda v^n$ in the right hand side of \eqref{eq:hmomgen} is $p_{\leq n}(\lambda)$.
  Therefore,
  \begin{equation}
    \label{eq:hmom}
    \mom{H_\lambda}_n = p_{\leq n}(\lambda) \text{ for every $\lambda\in \ZZ_{\geq 0}^l$}.
  \end{equation}
  By the Jacobi--Trudi identity \eqref{eq:jt},
  \begin{displaymath}
    S_\lambda = \sum_{w\in S_l} H_{\lambda_1-1+w(1),\dotsc,\lambda_l-l+w(l)},
  \end{displaymath}
  so by \eqref{eq:hmom},
  \begin{displaymath}
    \mom{S_\lambda}_n =  \sum_{w\in S_l} \sgn(w)p_{\leq n}(\lambda_1-1+w(1),\dotsc,\lambda_l-l+w(l)),
  \end{displaymath}
  as claimed.
\end{proof}
\begin{remark}
  In general, we do not know of a combinatorial proof of the non-negativity of $\sum_{w\in S_l} \sgn(w)p_{\leq n}(\lambda_1-1+w(1),\dotsc,\lambda_l-l+w(l))$, which follows from Theorem~\ref{theorem:weyl-mom}.
  When $l=2$, this is the main result of Kim and Hahn~\cite{Kim1997}, who refer to it as a conjecture of Landman, Brown and Portier \cite{landman1992partitions}.
\end{remark}
  The problem of characterizing those partitions $\lambda$ for which $W_\lambda(K^n)$ has a non-zero $S_n$-invariant vector for large $n$ appears to be quite hard.
  The following result solves this problem for partition with two rows, two columns, and for hook-partitions.
  \begin{theorem}
    \label{theorem:2row2colhook}
    Let $\lambda$ be a partition.
    \renewcommand{\theenumi}{\thetheorem.\arabic{enumi}}
  \begin{enumerate}
  \item \label{item:1} If $\lambda=(\lambda_1,\lambda_2)$, then $\mom{S_\lambda}>0$ unless $\lambda=(1,1)$.
  \item \label{item:2} If $\lambda'=(\lambda_1,\lambda_2)$, then $\mom{S_\lambda}>0$ if and only if $\lambda_1=\lambda_2$ (in which case $\mom{S_\lambda}=2$) or $\lambda_1=\lambda_2+1$ (in which case $\mom{S_\lambda}=1$).
  \item \label{item:3} If $\lambda = (a+1,1^b)$, then $\mom{S_\lambda}>0$ if and only if $a\geq \binom{b+1}2$.
  \end{enumerate}
\end{theorem}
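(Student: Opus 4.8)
The starting point for all three parts is the explicit formula of Theorem~\ref{theorem:weyl-mom}, together with the duality relating $S_\lambda$ and $S_{\lambda'}$: since $\tau_d(S_\lambda)=S_{\lambda'}$, one has $\mom{S_{\lambda'}}=\tau_{|\lambda|}$ applied termwise, but more usefully one can work directly with the generating function of Corollary~\ref{corollary:hlamu} or with the two-row specialization of Theorem~\ref{theorem:weyl-mom}. For a two-row partition $\lambda=(\lambda_1,\lambda_2)$ with $\lambda_1\geq\lambda_2\geq 0$, the symmetric group $S_2$ contributes only the identity and the transposition, so
\begin{displaymath}
  \mom{S_{(\lambda_1,\lambda_2)}} = p(\lambda_1+1,\lambda_2) - p(\lambda_1,\lambda_2+1),
\end{displaymath}
where $p(\vv)=p_{\leq\infty}(\vv)$ is the number of (stable) vector partitions of $\vv\in\ZZ_{\geq 0}^2$ into nonzero parts. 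So part~\ref{item:1} reduces to the inequality $p(\lambda_1+1,\lambda_2)>p(\lambda_1,\lambda_2+1)$ whenever $(\lambda_1,\lambda_2)\neq(1,1)$ — this is exactly the theorem of Kim and Hahn referenced in the remark after Theorem~\ref{theorem:weyl-mom}, so I would simply invoke it (checking the edge cases $\lambda_2=0$, where the right side vanishes and the left side is $\geq 1$ unless $\lambda_1=0$ too, and $\lambda=(1,1)$ where both sides equal $p(2,1)=p(1,2)=2$ so the moment is $0$).

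For part~\ref{item:2} I would use duality to convert to the two-column case. If $\lambda'=(\lambda_1,\lambda_2)$, then by the duality theorem $\mom{S_\lambda}=\sum_\mu(-1)^{|\lambda|-|\mu|}a^{\lambda'}_\mu\mom{X^\mu}_\infty$; alternatively, and more cleanly, $\wedge^d$ has character polynomial $E_d$, so the determinantal formula $S_\lambda=\det(E_{\lambda'_i+j-i})$ gives for a two-column $\lambda$ directly
\begin{displaymath}
  \mom{S_\lambda} = \mom{E_{\lambda_1}E_{\lambda_2-1}} - \mom{E_{\lambda_1+1}E_{\lambda_2}}
\end{displaymath}
after expanding the $2\times 2$ determinant $\det(E_{\lambda'_i+j-i})$. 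Now I need the moments $\mom{E_aE_b}$. These can be read off from the $u$-only specialization of Corollary~\ref{corollary:hlamu}: setting all $t_r=0$ leaves $\prod_{S\subset[m],S\neq\emptyset}(1-(-1)^{|S|}u^S)^{(-1)^{|S|+1}}$, whose coefficient of $u_1^{a}u_2^{b}$ gives $\mom{E_aE_b}$. The key structural fact is that $\wedge^d(K^n)$ for large $n$ contains the trivial $S_n$-representation with multiplicity $0$ or $1$ (it is $1$ precisely when $d$ is a triangular number, corresponding to the invariant Vandermonde-type vector), and more generally $\mom{E_aE_b}$ counts pairs of ``disjoint-support'' invariant configurations; I expect $\mom{E_aE_b}\in\{0,1,2\}$ with an explicit small description. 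Plugging this into the difference above and analysing when it is positive should yield exactly the conditions $\lambda_1=\lambda_2$ (value $2$) or $\lambda_1=\lambda_2+1$ (value $1$), with everything else giving $0$.

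For part~\ref{item:3}, the hook $\lambda=(a+1,1^b)$ has $l=b+1$ parts, so Theorem~\ref{theorem:weyl-mom} is a sum over $S_{b+1}$ with signs — this is where the real work lies. The clean approach is again the determinant $S_\lambda=\det(H_{\lambda_i+j-i})$: for a hook, this determinant has a well-known expansion (the Jacobi--Trudi determinant of a hook telescopes), giving $S_{(a+1,1^b)}=\sum_{k=0}^{b}(-1)^k H_{a+1+k}E_{1^{b-k}}$-type terms, or more precisely $S_{(a+1,1^b)} = H_{a+1}E_b - H_{a+2}E_{b-1} + \cdots$; equivalently one can use the identity $s_{(a+1,1^b)}=h_{a+1}e_b - h_{a+2}e_{b-1}+\cdots$ under the isomorphism $\Phi$ of Section~\ref{sec:oz}, which sends $s_\lambda\mapsto S_\lambda$, so it suffices to compute $\mom{S_{(a+1,1^b)}}$ using a symmetric-function identity for $h_n[H]$ paired with $s_{(a+1,1^b)}$ via Littlewood's formula as in the proof of Theorem~\ref{theorem:monotonicity}: $\dim W_\lambda(K^n)^{S_n}=(s_\lambda,h_n[H])$, and stably $(s_{(a+1,1^b)}, H_\bullet)$-type coefficients become a single binomial/partition count. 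After unwinding, $\mom{S_{(a+1,1^b)}}$ should equal the number of partitions of $a-\binom{b+1}{2}$ into at most $b+1$ parts each... with some constraint, which is positive exactly when $a\geq\binom{b+1}{2}$; the threshold $\binom{b+1}{2}$ is the ``cost'' of fitting $b+1$ distinct positive first-coordinates, mirroring the triangular-number phenomenon from part~\ref{item:2}. The main obstacle throughout is controlling the signed sum over $S_l$ in Theorem~\ref{theorem:weyl-mom} — for parts~\ref{item:1} and~\ref{item:2} ($l=2$) it is a single subtraction and manageable, but for part~\ref{item:3} one must either find the right telescoping of the hook Jacobi--Trudi determinant or, preferably, sidestep it entirely by working on the symmetric-function side with Littlewood's plethystic formula and a known expansion of $h_n[1/\prod(1-x_i)]$, extracting the coefficient against a hook Schur function.
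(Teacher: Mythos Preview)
Your overall strategy---Jacobi--Trudi plus the moment generating functions of Corollary~\ref{corollary:hlamu}---matches the paper's, but there are two genuine gaps and several index slips.

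\textbf{Part~\ref{item:1}.} Your two-term formula has the indices wrong: Theorem~\ref{theorem:weyl-mom} gives
\[
\mom{S_{(\lambda_1,\lambda_2)}} = p(\lambda_1,\lambda_2) - p(\lambda_1+1,\lambda_2-1),
\]
not $p(\lambda_1+1,\lambda_2)-p(\lambda_1,\lambda_2+1)$; these two expressions are \emph{not} equal (try $\lambda=(2,1)$). More importantly, the Kim--Hahn result referenced in the remark after Theorem~\ref{theorem:weyl-mom} establishes only the \emph{weak} inequality $p_n(\lambda_1,\lambda_2)\geq p_n(\lambda_1+1,\lambda_2-1)$ for each $n$. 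To get strict positivity of the stable moment you must show the inequality is strict for at least one $n$; the paper does this by explicit closed formulas for $p_2$ and $p_3$ (Lemmas~\ref{lemma:p2p3}--\ref{lemma:p3}), handling the even/odd parity cases separately. Your proposal skips this step entirely.

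\textbf{Part~\ref{item:2}.} Here your approach is essentially the paper's (after fixing the $2\times 2$ determinant to $E_{\lambda_1}E_{\lambda_2}-E_{\lambda_1+1}E_{\lambda_2-1}$). Note that your aside about $\wedge^d(K^n)$ having an invariant exactly when $d$ is triangular is false: $\mom{E_k}=0$ for all $k>1$. The correct computation from Corollary~\ref{corollary:hlamu} gives $\mom{E_aE_b}$ as the coefficient of $u_1^au_2^b$ in $(1+u_1)(1+u_2)/(1-u_1u_2)$, which takes values in $\{0,1,2\}$ as you guessed.

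\textbf{Part~\ref{item:3}.} The hook telescoping $S_{(a|b)}=\sum_{j=0}^b(-1)^j H_{a+j+1}E_{b-j}$ is right, and the paper uses it together with the interpretation of $\mom{H_iE_j}$ from the $l=m=1$ case of Corollary~\ref{corollary:hlamu} to show vanishing when $a<\binom{b+1}2$ (by extending the sum so it telescopes to $\mom{E_{a+b+2}}=0$). Your sketch of this direction is plausible though vague. The real gap is the converse: you assert that the resulting count ``is positive exactly when $a\geq\binom{b+1}2$'', but neither the alternating sum nor the plethystic pairing $(s_{(a|b)},h_n[H])$ is manifestly positive, and you give no mechanism. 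The paper instead constructs an invariant directly: when $a\geq\binom{b+1}2$ the hook $(a+1,1^b)$ dominates a partition of $a+b+1$ with $b+1$ \emph{distinct} parts, so $W_{(a|b)}(K^{b+1})$ has a nonzero weight vector whose $S_{b+1}$-orbit spans a copy of the regular representation, hence contains the trivial one. This representation-theoretic argument is the missing idea in your plan.
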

\begin{proof}
  [Proof of (\ref{item:1})]
  By Theorem~\ref{theorem:weyl-mom} we need to show that, for every $\lambda_1\geq \lambda_2\geq 1$,
  \begin{displaymath}
    p_{\leq n}(\lambda_1,\lambda_2)>p_{\leq n}(\lambda_1+1,\lambda_2-1)
  \end{displaymath}
  for sufficiently large $n$, unless $\lambda_1=\lambda_2=1$.
  From the main result of Kim and Hahn \cite{Kim1997} (the result on the last line of the first page), it follows that
  \begin{displaymath}
    p_n(\lambda_1,\lambda_2)\geq p_n(\lambda_1+1,\lambda_2-1) \text{ for all $n\geq 1$.}
  \end{displaymath}
  Therefore, it suffices to prove that $p_n(\lambda_1,\lambda_2)> p_n(\lambda_1+1,\lambda_2-1)$ for at least one value of $n$.
  When $k\geq l\geq 1$ are such that at least one of $k$ and $l$ is even, $p_2(k,l)>p_2(k+1,l-1)$.
  When both $k$ and $l$ are odd and $(k,l)\neq (1,1)$, $p_3(k,l)>p_3(k+1,l-1)$.
  These inequalities will be proved in Lemmas~\ref{lemma:p2} and~\ref{lemma:p3} below.
\end{proof}
\begin{lemma}
  \label{lemma:p2p3}
  For all $k,l\geq 0$,
  \begin{align}
    \label{eq:p2}
    p_2(k,l) & =
    \begin{cases}
      \frac{(k+1)(l+1)-1}2 & \text{if both $k$ and $l$ are even,}\\
      \frac{(k+1)(l+1)}2-1 & \text{otherwise},
    \end{cases}
    \\
    \label{eq:p3}
    p_3(k,l) & = \frac 16(A+3B+2C),
  \end{align}
  where
  \begin{align*}
    A & = \binom{k+2}2\binom{l+2}2-3(k+1)(l+1)+3,\\
    B & =
        \begin{cases}
          (k/2+1)(l/2+1)-2 &\text{if $k$ and $l$ are even,}\\
          (k+1)(l+2)/4-1 &\text{if $k$ is odd and $l$ is even,}\\
          (k+2)(l+1)/4-1 &\text{if $k$ is even and $l$ is odd,}\\
          (k+1)(l+1)/4 - 1 &\text{otherwise},
        \end{cases}
    \\
    C & =
        \begin{cases}
          1 &\text{if $k$ and $l$ are divisible by $3$,}\\
          0 &\text{otherwise.}
        \end{cases}
  \end{align*}
\end{lemma}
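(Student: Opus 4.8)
The plan is to recognize each $p_n(k,l)$ as an orbit count and apply the orbit-counting lemma (Burnside's lemma). By definition a vector partition of $(k,l)\in\Zpos^2$ into exactly $n$ parts is an $S_n$-orbit on the set $\mathcal O_n(k,l)$ of ordered tuples $(\vv_1,\dotsc,\vv_n)$ of non-zero vectors of $\Zpos^2$ with $\vv_1+\dotsb+\vv_n=(k,l)$, the group acting by permuting coordinates. Hence $p_n(k,l)=\frac{1}{n!}\sum_{\sigma\in S_n}|\mathcal O_n(k,l)^\sigma|$, and \eqref{eq:p2} and \eqref{eq:p3} are the $n=2$ and $n=3$ instances of this identity after each fixed-point set is counted.

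For $n=2$ the only non-trivial element is the transposition $\tau$. An element of $\mathcal O_2(k,l)$ is a pair $(\vv,(k,l)-\vv)$ with $\vv\neq 0$ and $\vv\neq(k,l)$, so $|\mathcal O_2(k,l)|=(k+1)(l+1)-2$ whenever $(k,l)\neq(0,0)$, while $\mathcal O_2(k,l)^\tau$ consists of the pairs with $2\vv=(k,l)$, $\vv\neq 0$, which contributes $1$ when $k$ and $l$ are both even (and $(k,l)\neq(0,0)$) and $0$ otherwise. Averaging and splitting on the parities of $k$ and $l$ gives \eqref{eq:p2}; the origin, where $p_2(0,0)=0$, is checked against the formula by hand.

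For $n=3$ the lemma specializes to $p_3(k,l)=\frac{1}{6}\bigl(A+3B+2C\bigr)$ with $A=|\mathcal O_3(k,l)|$, $B=|\mathcal O_3(k,l)^\tau|$ for a transposition $\tau$, and $C=|\mathcal O_3(k,l)^\rho|$ for a $3$-cycle $\rho$ (all three transpositions have the same fixed-point count, as do the two $3$-cycles, being conjugate). To compute $A$, I would first count the ordered $j$-tuples of possibly-zero vectors of $\Zpos^2$ summing to $(k,l)$ by a stars-and-bars argument in each coordinate, getting $q_j(k,l)=\binom{k+j-1}{j-1}\binom{l+j-1}{j-1}$ for $j\geq 1$ (so $q_1=1$), with $q_0(k,l)$ equal to $1$ at the origin and $0$ elsewhere, and then use inclusion--exclusion over the subset of parts forced to be zero: $A=q_3-3q_2+3q_1-q_0=\binom{k+2}{2}\binom{l+2}{2}-3(k+1)(l+1)+3$ for $(k,l)\neq(0,0)$. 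For $B$: a $\tau$-fixed triple is $(\vv,\vv,\vv')$ with $\vv,\vv'\neq 0$ and $2\vv+\vv'=(k,l)$, so $B$ is the number of non-zero $\vv$ with $2\vv\leq(k,l)$ coordinatewise and $2\vv\neq(k,l)$, namely $(\lfloor k/2\rfloor+1)(\lfloor l/2\rfloor+1)-1-\varepsilon$ where $\varepsilon=1$ exactly when $k$ and $l$ are both even; the four parity cases reproduce the four cases of $B$. For $C$: a $\rho$-fixed triple is $(\vv,\vv,\vv)$ with $3\vv=(k,l)$, so $C=1$ precisely when $3\mid k$, $3\mid l$ and $(k,l)\neq(0,0)$. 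Substituting into the averaged sum yields \eqref{eq:p3}.

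The content is routine; I expect the only genuine bookkeeping to be the parity case analysis for $B$ (and for the $\tau$-fixed pairs when $n=2$), keeping track of the low-order inclusion--exclusion terms $q_1$ and $q_0$, and the exceptional point $(0,0)$. At the origin the honest fixed-point counts differ from the clean polynomial expressions $A,B,C$ by $+1,-1,+1$ respectively; since $\frac{1}{6}(1-3+2)=0$ these corrections cancel in Burnside's average, so the stated formulas remain valid there — which I would verify explicitly to finish the proof.
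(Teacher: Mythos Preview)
Your proposal is correct and follows exactly the same approach as the paper: both identify $p_n(k,l)$ as the number of $S_n$-orbits on ordered $n$-tuples of non-zero vectors summing to $(k,l)$, apply Burnside's lemma, and interpret $A$, $B$, $C$ as the fixed-point counts for the identity, a transposition, and a $3$-cycle respectively. Your write-up is in fact more detailed than the paper's (which simply asserts that $A,B,C$ are these fixed-point counts and leaves the verification to the reader), in particular your inclusion--exclusion computation of $A$ and your handling of the degenerate point $(0,0)$ go beyond what the paper spells out.
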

\begin{proof}
  Consider the set of all ordered triples $((k_1,l_1),(k_2,l_2),(k_3,l_3))$ such that $\sum_i (k_i,l_i)=(k,l)$ and no $(k_i,l_i)=(0,0)$.
  The group $S_3$ acts by permutation on the set of all such triples, and the number of orbits if $p_3(k,l)$.
  The quantities $A$, $B$, and $C$ in Lemma~\ref{lemma:p2p3} are the number of such triples that are fixed by permutations in $S_3$ of cycle types $(1,1,1)$, $(2,1)$, and $(3)$, respectively.
  The formula for $p_3(k,l)$ then follows from Burnside's lemma.
  The formula for $p_2(k,l)$ is obtained in a similar fashion.
\end{proof}
\begin{lemma}
  \label{lemma:p2}
  For all integers $k\geq l\geq 1$ such that at least one of $k$ and $l$ is even,
  \begin{displaymath}
    p_2(k,l)>p_2(k+1,l-1).
  \end{displaymath}
\end{lemma}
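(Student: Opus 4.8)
The plan is to read off the two quantities from the closed formula~\eqref{eq:p2} of Lemma~\ref{lemma:p2p3} and compare them, without splitting into many subcases. First I would record the uniform lower bound implied by~\eqref{eq:p2}: for all $k,l\ge 0$,
\[
  2p_2(k,l)\ \ge\ (k+1)(l+1)-2,
\]
since $p_2(k,l)$ equals either $\frac{(k+1)(l+1)-1}{2}$ (when $k,l$ are both even) or $\frac{(k+1)(l+1)}{2}-1=\frac{(k+1)(l+1)-2}{2}$ (otherwise).

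Next I would invoke the hypothesis that at least one of $k,l$ is even. Under this assumption $k+1$ and $l-1$ are never both even (if $k$ is even then $k+1$ is odd; if $l$ is even then $l-1$ is odd), so $p_2(k+1,l-1)$ lies in the ``otherwise'' branch of~\eqref{eq:p2} and equals $\frac{(k+2)l}{2}-1$; hence $2p_2(k+1,l-1)=(k+2)l-2$ exactly. Subtracting,
\[
  2p_2(k,l)-2p_2(k+1,l-1)\ \ge\ (k+1)(l+1)-(k+2)l\ =\ k-l+1\ \ge\ 1,
\]
using $k\ge l$ in the last step, which yields the strict inequality $p_2(k,l)>p_2(k+1,l-1)$.

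The argument is short and elementary, so I do not foresee a real obstacle; the one point requiring care is the parity observation that $k+1$ and $l-1$ cannot both be even once one of $k,l$ is even --- this is exactly what pins $p_2(k+1,l-1)$ to the ``otherwise'' branch and makes the estimate go through. Should a case analysis be preferred instead, one can write out the three parity cases (both even; $k$ even and $l$ odd; $k$ odd and $l$ even), substitute~\eqref{eq:p2} in each, and find that the inequality reduces to $k>l-2$ in the first case and to $k+1>l$ in the other two, all of which follow from $k\ge l$; but the uniform estimate above avoids this bookkeeping.
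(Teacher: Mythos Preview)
Your proof is correct and follows essentially the same approach as the paper: both use the closed formula~\eqref{eq:p2} for $p_2$ and the parity observation that $(k+1,l-1)$ falls in the ``otherwise'' branch. The only difference is presentational---the paper computes the exact difference separately in the two parity cases (both even; exactly one even), while you replace this with the uniform lower bound $2p_2(k,l)\ge (k+1)(l+1)-2$, which is a tidy way to merge the cases.
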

\begin{proof}
    By Lemma~\ref{lemma:p2p3}, we also have:
  \begin{displaymath}
    p_2(k+1,l-1) =
    \begin{cases}
      \frac{(k+2)l-1}2 &\text{if $k$ and $l$ are odd,}\\
      \frac{(k+2)l}2 -1 &\text{otherwise}.
    \end{cases}
  \end{displaymath}
  Thus, if $k$ and $l$ are both even and $k\geq l$, then
  \begin{align*}
    p_2(k,l)-p_2(k+1,l-1)&=\frac{(k+1)(l+1)-1}2-\left(\frac{(k+2)l}2-1\right)\\
                         &=\frac{k-l}2+1>0.
  \end{align*}
  If one of $k$ and $l$ is even and the other is odd, then
  \begin{align*}
    p_2(k,l)-p_2(k+1,l-1)&=\frac{(k+1)(l+1)}2-1-\left(\frac{(k+2)l}2-1\right)\\
                         &=\frac{k-l+1}2>0,
  \end{align*}
  thereby completing the proof of Lemma~\ref{lemma:p2}.
\end{proof}
\begin{lemma}
  \label{lemma:p3}
  If $k\geq l\geq 1$, both $k$ and $l$ are odd, and $(k,l)\neq (1,1)$, then $p_3(k,l)>p_3(k+1,l-1)$.
\end{lemma}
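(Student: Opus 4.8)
The plan is to evaluate both sides using the closed formula of Lemma~\ref{lemma:p2p3} and reduce the inequality to an elementary estimate. Since $k$ and $l$ are both odd, $k+1$ and $l-1$ are both even; hence $p_3(k,l)$ is computed from the ``otherwise'' branch for $B$ in Lemma~\ref{lemma:p2p3}, while $p_3(k+1,l-1)$ is computed from the ``$k$ and $l$ even'' branch. Writing $6p_3(k,l)=A_1+3B_1+2C_1$ and $6p_3(k+1,l-1)=A_2+3B_2+2C_2$ for the corresponding values of $A$, $B$, $C$, it suffices to prove that
\[
  (A_1-A_2)+3(B_1-B_2)+2(C_1-C_2)>0 .
\]
Note that $(k,l)\neq(1,1)$ together with $k\geq l\geq 1$ and $k,l$ odd forces $k\geq 3$.

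I would then compute the three differences. Expanding the binomial coefficients and collecting terms gives
\[
  A_1-A_2=(k-l+1)\cdot\frac{(k+2)(l+1)-6}{2},
\]
a nonnegative integer (the second factor is even since $k+2$ is odd and $l+1$ even, and is positive because $k\geq 3$). Substituting into the two relevant branches of $B$ yields $B_1-B_2=\tfrac{1-l}{2}$, an integer because $l$ is odd, so $3(B_1-B_2)=-\tfrac32(l-1)$. For the last term, observe that $C_1=1$ forces $3\mid k$ while $C_2=1$ forces $3\mid k+1$; these are incompatible, so $(C_1,C_2)\neq(1,1)$ and therefore $2(C_1-C_2)\geq -2$.

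Finally I would combine the estimates, distinguishing $l=1$ from $l\geq 3$. When $l=1$ (so $k\geq 3$) one gets $A_1-A_2=k(k-1)$ and $B_1-B_2=0$, so the left side equals $k(k-1)+2(C_1-C_2)\geq 6-2=4>0$. When $l\geq 3$, using $k-l+1\geq 1$ and $(k+2)(l+1)\geq(l+2)(l+1)$ gives $A_1-A_2\geq\tfrac12\big((l+2)(l+1)-6\big)$, hence
\[
  6\big(p_3(k,l)-p_3(k+1,l-1)\big)\ \geq\ \frac{(l+2)(l+1)-6-3(l-1)}{2}-2\ =\ \frac{l^2-1}{2}-2 ,
\]
which is at least $2$ for every $l\geq 3$. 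In both cases the expression is strictly positive, which proves the lemma. The only delicate points in carrying this out are keeping track of which parity branch of $B$ applies on each side and the observation that $C_1$ and $C_2$ cannot simultaneously equal $1$; the remainder is routine algebra.
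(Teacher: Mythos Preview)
Your proof is correct and follows essentially the same strategy as the paper: both evaluate $p_3(k,l)-p_3(k+1,l-1)$ directly from the closed formula of Lemma~\ref{lemma:p2p3} and verify positivity. The paper collapses everything into a single polynomial in $k$ and $l$ (split only according to whether $3\mid k$ and $3\mid l$), whereas you keep the contributions from $A$, $B$, and $C$ separate and finish with a short case split on $l=1$ versus $l\geq 3$; your handling of the $C$-term---observing that $C_1$ and $C_2$ cannot simultaneously equal $1$, hence $2(C_1-C_2)\geq -2$---is a tidy way to avoid further subcases.
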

\begin{proof}
  When $k$ and $l$ are odd, Lemma~\ref{lemma:p2p3} gives:
  \begin{multline*}
    12(p_3(k,l)-p_3(k+1,l-1)) \\ =
    \begin{cases}
      (kl+2l)(k-l) + k(k-3) + 3l + 4 & \text{if $3|k$ and $3|l$},\\
      (kl+2l)(k-l) + k(k-3) + 3l & \text{otherwise}.
    \end{cases}
  \end{multline*}
  This is clearly positive for all $k\geq l$ such that $k\geq 3$ and $l\geq 1$.
\end{proof}
\begin{proof}
  [Proof of (\ref{item:2})]
  By the second Jacobi--Trudi identity,
  \begin{equation}
    \label{eq:wlap}
    W_{\lambda'} = E_{\lambda_1}E_{\lambda_2} - E_{\lambda+1}E_{\lambda_2-1}.
  \end{equation}
  Taking $l=0$ and $m=2$ Corollary~\ref{corollary:hlamu} gives:
  \begin{displaymath}
    \sum_{\lambda_1,\lambda_2\geq 0} \mom{E_{\lambda_1}E_{\lambda_2}}u_1^{\lambda_1}u_2^{\lambda_2} = \frac{(1+u_1)(1+u_2)}{(1-u_1u_2)}.
  \end{displaymath}
  Therefore the coefficient of $u_1^{\lambda_1}u_2^{\lambda_2}$ is the number $p'(\lambda_1,\lambda_2)$ of ways of writing $(\lambda_1,\lambda_2)$ as a sum of vectors of the form $(1,1)$, $(1,0)$ and $(0,1)$, where the vectors $(0,1)$ and $(1,0)$ are used at most once.
  Clearly
  \begin{displaymath}
    p'(\lambda_1,\lambda_2) =
    \begin{cases}
      2 & \text{if $\lambda_1=\lambda_2 \geq 1$},\\
      1 & \text{if $|\lambda_1-\lambda_2|=1$},\\
      0 & \text{otherwise}.
    \end{cases}
  \end{displaymath}
  By \eqref{eq:wlap}, for any partition $\lambda=(\lambda_1,\lambda_2)$ with two parts,
  \begin{displaymath}
    \mom{W_{\lambda'}} = p'(\lambda_1,\lambda_2)-p'(\lambda_1+1,\lambda_2-1)=
    \begin{cases}
      2 & \text{if $\lambda_1\geq \lambda_2 \geq 1$},\\
      1 & \text{if $\lambda_1-\lambda_2=1$},\\
      0 & \text{otherwise},
    \end{cases}
  \end{displaymath}
  as claimed.
\end{proof}
\begin{proof}
  [Proof of (\ref{item:3})]
  Using Pieri's rule, we have:
  \begin{displaymath}
    h_ke_l=s_{(k-1|l)}+s_{(k|l-1)},
  \end{displaymath}
  whence
  \begin{displaymath}
    s_{(a|b)}=h_{a+1}e_b-h_{a+2}e_{b-1} + \dotsb + (-1)^bh_{a+b+1}e_0.
  \end{displaymath}
  It follows that
  \begin{equation}
    \label{eq:mom-wab}
    \mom{W_{(a|b)}} = \sum_{j=0}^b (-1)^j\mom{H_{a+j+1}E_{b-j}}.
  \end{equation}

  Taking $l=m=1$ in Corollary~\ref{corollary:hlamu} gives:
  \begin{equation}
    \label{eq:he}
    \sum_{a,b\geq 0} \mom{H_iE_j}t^iu^j = \frac{\prod_{k=0}^\infty (1+t^ku)}{\prod_{k=1}^\infty (1-t^k)}.
  \end{equation}
  The coefficient of $t^iu^j$ in the above expression is the number $\tilde p(i,j)$ of ways of writing the vector $(i,j)$ as a sum of vectors of the form $(a,0)$ where $a>0$, and $(a,1)$ where $a\geq 0$, and vectors of the form $(a,1)$ are used at most once.
  If $\tilde p(i,j)>0$, then $j$ distinct vectors of the form $(a,1)$ are used, so that $i\geq \binom j2$.
  Therefore
  \begin{equation}
    \label{eq:zero-region}
    \tilde p(i,j) = 0 \text{ for all $i<\binom j2$}.
  \end{equation}

  If $a< \binom{b+1}2$, then $a-j<\binom{b+j+1}2$ for all $j\geq 0$.
  By \eqref{eq:zero-region} $\mom{H_{a-j}E_{b+j+1}}=0$ for all $j\geq 0$.
  This allows us to extend the index of summation in the right hand side of \eqref{eq:mom-wab} without changing the sum:
  \begin{multline*}
    \mom{W_{(a|b)}} = \sum_{j=-a-1}^b (-1)^j\mom{H_{a+j+1}E_{b-j}}\\ = \sum_{k+l=a+b+1}\mom{H_kE_l} = \mom{W_{(0|a+b+1)}}.
  \end{multline*}
  Taking $l=0$ and $m=1$ in Corollary~\ref{corollary:hlamu} can be used to show that $\mom{E_k}=0$ for all $k>1$, so $\mom{W_{(0|a+b+1)}}=\mom{E_{a+b+2}}=0$ for all $a,b\geq 0$.
  Therefore, $\mom{W_{(a|b)}}=0$ for $a<\binom{b+1}2$.

  Conversely, suppose $a\geq \binom{b+1}2$.
  By Theorem~\ref{theorem:monotonicity}, it suffices to show that $W_\lambda(K^n)$ contains a non-zero $S_n$-invariant vector for some positive integer $n$.
  We shall show that $W_\lambda(K^{b+1})$ contains a non-zero $S_{b+1}$-invariant vector.
  The hook partition $\lambda=(a+1,1^b)$ dominates the partition $\mu=(a-\binom b2+1,b,b-1,\dotsc,2,1)$, which has $b+1$ distinct parts.
  Therefore $W_{(a|b)}(K^{a+b+1})$ contains a non-zero vector $v$ with weight $\mu$.
  For each $w\in S_n$ let $v_w=\rho_{(a|b)}(w)v$.
  Then $v_w$ lies in the weight space of $w\cdot \mu$.
  Hence the vectors $\{v_w\mid w\in S_n\}$ are linearly independent, and generate a representation that is isomorphic to the regular representation of $S_n$.
  In particular, the trivial representation is contained in $W_{(a|b)}(K^{a+b+1})$.
\end{proof}
\section{Strict Polynomial Functors and $\FI$-modules}
\label{sec:strict-polyn-funct-1}
In this section we may take $K$ to be any field (not necessarily of characteristic zero).
Friedlander and Suslin \cite{MR1427618} introduced strict polynomial functors to unify homogeneous polynomial representations of $GL_n(K)$ of degree $d$ across all $n$.
Later, Church, Ellenberg and Farb \cite{MR3357185} introduced $\FI$-modules to unify representations of $S_n$ across all $n$.
In this section, we lift the restriction functor $\Res^{GL_n(K)}_{S_n}$ to a functor from the category of strict polynomial functors to the category of $\FI$-modules.
\subsection{Strict Polynomial Functors}
\label{sec:strict-polyn-funct}
The Schur category (also known as the divided power category, see \cite{vanderKallen2015}) $\mathbf\Gamma^d$ is the category whose objects are finite dimensional vector spaces over $K$.
Given objects $V$ and $W$,
\begin{displaymath}
  \Hom_{\mathbf\Gamma^d}(V,W) = \Hom_{S_d}(V^{\otimes d},W^{\otimes d}).
\end{displaymath}
The category of strict polynomial functors is the category $\Rep\mathbf\Gamma^d$ whose objects are $K$-linear covariant functors from $\mathbf\Gamma^d$ to the category of $K$-vector spaces, and morphisms are natural transformations between functors (see \cite[Section~2]{MR1427618}).
When $K$ has characteristic zero, $\Rep \mathbf\Gamma^d$ is semisimple, and its simple objects are functors known as \emph{Weyl functors} (see \cite{weyl-schur}).
For each partition $\lambda$ of $d$ let $W_\lambda$ denote the Weyl functor corresponding to $\lambda$.

Let $\Rep^d GL_n(K)$ denote the category of homogeneous polynomial representations of $GL_n(K)$ of degree $d$.
Define a functor $\ev_n:\Rep\mathbf\Gamma^d\to \Rep^d GL_n(K)$ as follows: for each strict polynomial functor $F:\mathbf\Gamma^d\to \Vec$ define $\ev_n(F)=F(K^n)$.
Let $T\in GL_n(K)$ act on $F(K^n)$ by $F(T^{\otimes d})$.
This makes $F(K^n)$ a representation of $GL_n(K)$ which turns out to be a homogeneous polynomial representation of degree $d$.
For each $n\geq 1$ and $\lambda\in P(n,d)$, $\ev_n(W_\lambda)= W_\lambda(K^n)$ is the irreducible polynomial representation of $GL_n(K)$ corresponding to $\lambda$ (consistent with the notation of Section~\ref{sec:introduction}).

\subsection{$\FI$-modules}
\label{sec:fi-modules}
The category $\FI$ is the one that has finite sets as objects, and injective functions as morphisms.
The category of $\FI$-modules is the category $\fimod$ whose objects are covariant functors from $\FI$ to the category of $K$-vector spaces, and morphisms are natural transformations of functors.
Let $\Rep S_n$ denote the category of representations of $S_n$ over $K$.
The evaluation functor $\ev_n:\fimod\to \Rep S_n$ is defined by setting $\ev_n(V)=V([n])$, where $[n]=\{1,\dotsc,n\}$.

For each partition $\lambda=(\lambda_1,\dotsc,\lambda_l)$ there exists an $\FI$-module $V(\lambda)$ (see \cite[Proposition~3.4.1]{MR3357185}) such that, for every $n\geq |\lambda|+\lambda_1$, we have:
\begin{displaymath}
  \ev_n(V(\lambda)) = V_{\lambda[n]}.
\end{displaymath}
\subsection{The Restriction Functor}
\label{sec:restriction-functor}
For each object $A$ of $\FI$ let $\F[A]$ be the vector space of all functions $A\to K$.
Given an injective function $i:A\to B$, define $\F(i):\F[A]\to \F[B]$ by:
\begin{displaymath}
  \F(i)(f)(b) =
  \begin{cases}
    f(a) &\text{if there exists $a\in A$ such that $i(a)=b$},\\
    0 & \text{otherwise},
  \end{cases}
\end{displaymath}
for all $f\in \F[A]$.
Then $\F:\FI\to \mathbf\Gamma^1$ is a functor.
For every positive integer $d$, define $\F^d:\FI\to\mathbf\Gamma^d$ by
\begin{displaymath}
  \F^d(A) = \F[A], \quad \F^d(i) = \F(i)^{\otimes d}.
\end{displaymath}
The restriction functor
\begin{displaymath}
  \Res^d:\Rep\mathbf\Gamma^d\to \fimod
\end{displaymath}
is defined by:
\begin{displaymath}
  \Res^d F = F\circ \F^d \text{ for every object $F$ of $\Rep\mathbf\Gamma^d$.}
\end{displaymath}
\begin{theorem}
  \label{theorem:Res}
  The diagram of functors
  \begin{displaymath}
    \xymatrix{
      \Rep \mathbf\Gamma^d \ar[rr]^{\Res^d} \ar[d]_{\ev_n}&& \fimod\ar[d]^{\ev_n}\\
      \Rep^d GL_n(K) \ar[rr]_{\Res^{GL_n(K)}_{S_n}} && \Rep S_n
      }
  \end{displaymath}
  commutes, in the sense that $\Res^{GL_n(K)}_{S_n}\circ \ev_n$ is naturally isomorphic to $\ev_n\circ \Res^d$.
\end{theorem}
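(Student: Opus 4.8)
The plan is to build an explicit natural isomorphism of functors $\Rep\mathbf\Gamma^d\to \Rep S_n$ directly out of the tautological identification of $\F[[n]]$ with $K^n$. First I would unwind both composites on an object $F$ of $\Rep\mathbf\Gamma^d$. On one side, $(\ev_n\circ\Res^d)(F)=(\Res^d F)([n])=F(\F^d([n]))=F(\F[[n]])$, and since $S_n$ is the group of automorphisms of $[n]$ in $\FI$, a permutation $w$ acts on this space through $\F^d(w)=\F(w)^{\otimes d}$, that is, by the operator $F(\F(w)^{\otimes d})$. On the other side, $(\Res^{GL_n(K)}_{S_n}\circ\ev_n)(F)=F(K^n)$ as a vector space, with $w\in S_n$ acting through the permutation matrix $P_w\in GL_n(K)$, normalised so that $P_we_j=e_{w(j)}$ on the standard basis, i.e. by $F(P_w^{\otimes d})$.

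Next I would introduce the linear isomorphism $\phi_n:\F[[n]]\to K^n$ sending a function $f:[n]\to K$ to $\sum_{j=1}^n f(j)e_j$; it carries the indicator function $\delta_j$ of $j\in[n]$ to $e_j$. Then $\phi_n^{\otimes d}:\F[[n]]^{\otimes d}\to (K^n)^{\otimes d}$ is $S_d$-equivariant, hence a morphism $\F[[n]]\to K^n$ in $\mathbf\Gamma^d$, invertible with inverse $(\phi_n^{-1})^{\otimes d}$. I set $\eta_F=F(\phi_n^{\otimes d})$; because $F$ preserves isomorphisms, $\eta_F:F(\F[[n]])\to F(K^n)$ is an isomorphism of $K$-vector spaces. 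To see that $\eta_F$ intertwines the two $S_n$-actions, I would first record the elementary identity $\phi_n\circ\F(w)=P_w\circ\phi_n$ of linear maps $\F[[n]]\to K^n$: from the definition of $\F$ and the fact that $w$ is a bijection, $\F(w)(f)(b)=f(w^{-1}(b))$, so $\F(w)(\delta_j)=\delta_{w(j)}$, which $\phi_n$ sends to $e_{w(j)}=P_we_j=P_w\phi_n(\delta_j)$. Taking $d$th tensor powers and applying $F$, using functoriality and $(\psi\circ\chi)^{\otimes d}=\psi^{\otimes d}\circ\chi^{\otimes d}$, yields $\eta_F\circ F(\F(w)^{\otimes d})=F((\phi_n\circ\F(w))^{\otimes d})=F((P_w\circ\phi_n)^{\otimes d})=F(P_w^{\otimes d})\circ\eta_F$, which is exactly the required equivariance.

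Finally I would check naturality in $F$: for a morphism $\theta:F\to G$ in $\Rep\mathbf\Gamma^d$, the naturality square of the natural transformation $\theta$ at the morphism $\phi_n^{\otimes d}:\F[[n]]\to K^n$ of $\mathbf\Gamma^d$ reads $\theta_{K^n}\circ F(\phi_n^{\otimes d})=G(\phi_n^{\otimes d})\circ\theta_{\F[[n]]}$, i.e. $\theta_{K^n}\circ\eta_F=\eta_G\circ\theta_{\F[[n]]}$, which is precisely what naturality of $\eta$ demands. Hence $F\mapsto\eta_F$ is the desired natural isomorphism $\ev_n\circ\Res^d\to\Res^{GL_n(K)}_{S_n}\circ\ev_n$. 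There is no genuinely hard step here; the one place where care is needed is keeping track of conventions — viewing $\phi_n$ not merely as a linear map but as the morphism $\phi_n^{\otimes d}$ of $\mathbf\Gamma^d$ so that $F$ may be applied to it, and making sure the normalisation of the permutation action on $K^n$, the $\FI$-morphism $\F(w)$, and the $GL_n(K)$-action $T\mapsto F(T^{\otimes d})$ all line up, so that $\F(w)$ really does correspond to $P_w$ and not to $P_{w^{-1}}$. Once these conventions are fixed the verification is purely formal.
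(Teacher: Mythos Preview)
Your proposal is correct and follows essentially the same approach as the paper: both identify $K^n$ with $\F[[n]]$ via $e_i\leftrightarrow\delta_i$, verify that under this identification the permutation-matrix action matches the $\FI$-action, and then note naturality in $F$. Your write-up is in fact more explicit than the paper's---you spell out the computation $\F(w)(\delta_j)=\delta_{w(j)}$ and the naturality square for a morphism $\theta:F\to G$, where the paper simply asserts these follow tautologically---but the underlying argument is identical.
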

\begin{proof}
  Given $F\in \Rep\mathbf\Gamma^d$, $\ev_n(K)$ is $F(K^n)$.
  Given $w\in S_n$, let $T_w\in GL_n(K)$ denote the linear map the takes the $i$th coordinate vector $e_i$ of $K^n$ to $e_{w(i)}$.
  An element $w\in S_n$ acts on $F(K^n)$ via $F(T_w^{\otimes d})$.
  On the other hand,
  \begin{align*}
    \ev_n\circ \Res^d(F)&=F\circ \F^d([n]).
  \end{align*}
  An element $w\in S_n$ acts on $F(\F^d([n]))$ by $F(\F^d(w))=F(\F(w)^{\otimes d})$.
  These two actions of $S_n$ coincide under the isomorphism $K^n\to \F^d([n])$ given by $e_i\mapsto \delta_i$, where $\delta_i$ is the Kronecker delta function on $[n]$ supported at $i$.
  Thus we get an isomorphism $\Res^{GL_n(K)}_{S_n}\circ\ev_n\to\ev_n\circ \Res^d(F)$ of representations of $S_n$.
  The naturality of this isomorphism follows tautologically from unwinding the definitions.
\end{proof}
Church, Ellenberg and Farb introduced the notion of finite generation of $\FI$-modules \cite[Definition~1.2]{MR3357185}.
In characteristic zero, finitely generated $\FI$-modules have eventually polynomial character \cite[Theorem~1.5]{MR3357185}, and in general characteristic, have eventually polynomial dimension \cite[Theorem~B]{CEFN}.
Therefore, the stability of restriction coefficients (discussed in Section~\ref{theorem:stable-res}) is also a consequence of the following theorem.
\begin{theorem}
  \label{theorem:isFI}
  For every finitely generated strict polynomial functor $F$ of degree $d$, the $\FI$-module $\Res^d F$ is finitely generated in degree $d$.
\end{theorem}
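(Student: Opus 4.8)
The plan is to reduce the assertion to a standard family of projective generators of $\Rep\mathbf\Gamma^d$ and then to identify the corresponding $\FI$-modules explicitly. The reduction rests on two formal facts, which I would establish first. (i) The functor $\Res^d$ preserves surjections and finite direct sums; both are checked after evaluating at a finite set $A$, where $(\Res^d G)(A)=G(\F[A])$ and where surjections and finite direct sums in $\Rep\mathbf\Gamma^d$ are computed objectwise. (ii) The class of $\FI$-modules admitting a finite generating set concentrated in degrees $\le d$ is closed under quotients and finite direct sums (the image of a finite generating set is again one, and a union of two of them still works). Given (i) and (ii), it suffices to prove the theorem for a family of finitely generated degree-$d$ functors from which every finitely generated $F$ of degree $d$ receives a surjection out of a finite direct sum. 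I would take this family to be the representable functors $\Gamma^{d,K^m}=\Hom_{\mathbf\Gamma^d}(K^m,-)$ for $m\ge 1$: by Friedlander--Suslin \cite{MR1427618} these are projective generators, and by the Yoneda lemma any finite generating set of $F$ (whose members lie in spaces $F(V)$ with each $V$ isomorphic to some $K^{m_j}$) assembles into a surjection $\bigoplus_j\Gamma^{d,K^{m_j}}\to F$.

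It remains to treat $F=\Gamma^{d,K^m}$, and this is the heart of the matter. First I would record the elementary identity $\Hom_{\mathbf\Gamma^d}(V,W)=\Hom_{S_d}(V^{\otimes d},W^{\otimes d})=(\Hom_K(V,W)^{\otimes d})^{S_d}=\Gamma^d(\Hom_K(V,W))$, valid over any field for finite-dimensional $V$ and $W$, which gives $\Gamma^{d,K^m}(W)=\Gamma^d(W^{\oplus m})$ functorially in $W$. Evaluating $\Res^d\Gamma^{d,K^m}=\Gamma^{d,K^m}\circ\F^d$ at a finite set $A$, and using $\F[A]^{\oplus m}\cong\F[A\times[m]]$, gives $(\Res^d\Gamma^{d,K^m})(A)=\Gamma^d(\F[A\times[m]])$. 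The key observation is that $\Gamma^d$ of a permutation module $\F[B]$ is nothing but its space of $S_d$-invariants $(\F[B^{d}])^{S_d}$, which is free on the set of $S_d$-orbits on $B^d$, i.e.\ on the multisets of size $d$ with entries in $B$; this holds in every characteristic. Hence $\Res^d\Gamma^{d,K^m}$ is the $\FI$-module freely spanned by the size-$d$ multisets in $A\times[m]$, with the map induced by an injection $A\hookrightarrow B$ being the evident inclusion of multiset sets (in particular it carries basis multisets to basis multisets). Finally, a size-$d$ multiset in $A\times[m]$ has support projecting onto a subset $A_0\subseteq A$ with $|A_0|\le d$, through which the corresponding basis element factors; therefore $\Res^d\Gamma^{d,K^m}$ is generated by $\bigcup_{|A|\le d}(\Res^d\Gamma^{d,K^m})(A)$, which is finite-dimensional, and so is finitely generated in degrees $\le d$. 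Combined with the reduction above, this proves the theorem.

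The step I expect to be the main obstacle is the explicit identification of $\Res^d$ on the representable generators, and in particular the observation that $\Gamma^d$ of a permutation module is again a free $\FI$-module on a set of multisets over an arbitrary field: this is what keeps the argument characteristic-free, and it is also the source of the bound ``a size-$d$ multiset uses at most $d$ elements of $A$'', which is exactly what forces generation in degree $d$. In characteristic zero the reduction to representables simplifies a little --- finitely generated degree-$d$ functors are then direct summands of representables, since $\Rep\mathbf\Gamma^d$ is semisimple with finitely many simple objects --- but this combinatorial core is needed regardless of the characteristic.
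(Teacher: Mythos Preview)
Your proof is correct, but it takes a genuinely different route from the paper's. The paper reduces to the generators $\Gamma^\lambda$ of $\Rep\mathbf\Gamma^d$, observes that each $\Gamma^\lambda$ is a \emph{subobject} of $\otimes^d$, and then invokes the Noetherian property of $\FI$-modules \cite[Theorem~A]{CEFN} to conclude: since $\Res^d\otimes^d(A)\cong\F^1(A^d)$ is finitely generated in degree $d$ by \cite[Proposition~2.3.6]{MR3357185}, so is every sub-$\FI$-module. You instead reduce via \emph{quotients}: every finitely generated $F$ is a quotient of a finite direct sum of representables $\Gamma^{d,K^m}$, and you then identify $\Res^d\Gamma^{d,K^m}$ explicitly as the free $\FI$-module on size-$d$ multisets in $A\times[m]$, which is visibly generated in degree $\le d$ by the support argument. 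The trade-off is clear: the paper's argument is shorter but leans on the Noetherianity of $\FI$-modules over arbitrary Noetherian rings, a substantial theorem; your argument is more self-contained and avoids Noetherianity altogether, at the cost of the computation $\Gamma^{d,K^m}(W)\cong\Gamma^d(W^{\oplus m})$ and the combinatorial identification of $\Gamma^d$ on permutation modules. Both are characteristic-free; yours would survive even over base rings where Noetherianity of $\FI$-modules is unavailable.
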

\begin{proof}
  Let $\Gamma^d$ denote the strict polynomial functor $\Gamma^d(V)=(V^{\otimes d})^{S_d}$, the subspace of $S_d$-invariant tensors in $V^{\otimes d}$.
  For $\lambda=(\lambda_1,\dotsc,\lambda_l)$, let $\Gamma^\lambda(V)=\Gamma^{\lambda_1}(V)\otimes\dotsb\otimes\Gamma^{\lambda_l}(V)$.
  The functors $\Gamma^\lambda$, as $\lambda$ runs over all partitions of $d$, generate $\Rep\mathbf\Gamma^d$ \cite[Proposition~2.9]{MR3077659}.
  Therefore, it suffices to prove the theorem for the functors $F=\Gamma^\lambda$.
  Since $\Gamma^\lambda$ is a subobject of $\otimes^d$ in the $\Rep\mathbf\Gamma^d$, $\Res^d\Gamma^d$ is a subobject of $\Res^d\otimes^d$ in $\fimod$.
  By the Noetherian property of $\FI$-modules \cite[Theorem A]{CEFN}, it suffices to show that $\Res^d\otimes^d$ is finitely generated.
  But $\Res^d\otimes^d(A)=\F^1(A)^{\otimes d} \cong \F^1(A^d)$, which is finitely generated in degree $d$ by \cite[Proposition~2.3.6]{MR3357185}.
\end{proof}
\subsection*{Acknowledgements}
We thank Aprameyo Pal, K. N. Raghavan, Anne Schilling, and S. Viswanath for helpful discussions and encouragement. We thank Brian Hopkins for his answer on mathoverflow \linebreak \url{https://mathoverflow.net/q/340231} which helped us prove part (1) of Theorem~\ref{theorem:2row2colhook}. We thank Dipendra Prasad for an argument in the proof of the sufficiency of the condition in part (3) of Theorem~\ref{theorem:2row2colhook}. We thank the referees for their comments and corrections to the original manuscript.

\end{document}